\providecommand{\U}[1]{\protect\rule{.1in}{.1in}}
\providecommand{\U}[1]{\protect\rule{.1in}{.1in}}
\providecommand{\U}[1]{\protect\rule{.1in}{.1in}} \textwidth 15.8cm
\theoremstyle{plain}
\newtheorem{theorem}{Theorem}[section]
\newtheorem{proposition}[theorem]{Proposition}
\newtheorem{remark}[theorem]{Remark}
\newtheorem{definition}[theorem]{Definition}
\numberwithin{equation}{section}
\begin{document}
\title[Multiple Cohen strongly summing operators]{Multiple Cohen strongly summing operators, ideals, coherence and compatibility}
\author{Jamilson R. Campos}
\address{Departamento de Ci\^{e}ncias Exatas \\
\indent Universidade Federal da Para\'{\i}ba - Campus IV \\
\indent R. da Mangueira, s/n, Centro \\
\indent Rio Tinto, 58.297-000, Brazil.}
\email{jamilson@dce.ufpb.br and jamilsonrc@gmail.com}
\thanks{2010 Mathematics Subject Classification: 46G25, 47H60, 47L22.}

\begin{abstract}
Considering the successful theory of multiple summing multilinear operators as
a prototype, we introduce the classes of multiple Cohen strongly $p$-summing
multilinear operators and polynomials. The adequacy of these classes under the
viewpoint of the theory of multilinear and polynomial ideals and holomorphy types is discussed in detail. Some abstract results are also proved in the abstract setting of the \textit{full general Pietsch Domination Theorem} due to Pellegrino, Santos and Seoane-Sep\'{u}lveda.

\end{abstract}
\keywords{Cohen strongly $p$-summing operators; full general Pietsch Domination Theorem; operator ideals, polynomial ideals.}
\maketitle




\section{Introduction}


J. S. Cohen \cite{cohen73} introduced the class of strongly $p$-summing
linear operators motivated by the fact that the class of absolutely
$p$-summing linear operators is not closed under conjugation. Pietsch
(\cite{pietsch67}, page 338) shows that the identity operator from $l_{1}$ to
$l_{2}$ is absolutely $2$-summing but its conjugate, from $l_{2}$ to
$l_{\infty}$, is not absolutely $2$-summing. In his work Cohen shows that the
class of strongly $p$-summing operators characterizes the conjugates of
absolutely $p^{\ast}$-summing operators, with $1/p+1/p^{\ast}=1$.

In the context of the theory of operator ideals (\cite{pietsch80,PPPP}), it is a natural question whether the class of Cohen (linear)
operators forms a complete ideal and also how to generalize this class to
multi-ideals and polynomial ideals without loosing the essence of the original
ideal. For absolutely summing linear operators there are several types of
extensions (e. g. \cite{Matos-N, daniel11}). We mention \cite{studia},
\cite{CDM09}, \cite{CDM33}, \cite{CDM}, \cite{joilson11} as attempts to
establish general criteria that the ideals should possess to preserve
properties of the linear ideal.

The notion of Cohen summability for multilinear operators was investigated by \cite{achour07} (see also \cite{mezrag09}). In this work we introduce the class of multiple Cohen strongly $p$-summing
multilinear operators and polynomials and we show that these ideals are
coherent and compatible.

There are also two well known approaches used to study polynomials between Banach spaces. On the one hand, L. Nachbin \cite{nachbin69} introduced holomorphy types as classes of polynomials that are stable under differentiation. On the other hand, A. Pietsch \cite{pietsch80} introduced the notion of ideals of multilinear operators that immediately adapts to polynomials. Some classes of polynomials are both holomorphy types and ideals, such as nuclear and compact polynomials. We show that this fact is also true for the class of multiple Cohen strongly $p$-summing polynomials.

The paper is organized as follows: Sections \ref{ol} and \ref{om} contain
definitions and results concerning Cohen linear operators; in Section
\ref{omc} we define the notion of multiple Cohen strongly $p$-summing
multilinear operators and compare the size of this class with the class introduced in \cite{achour07}; Section \ref{ic} is devoted to show that these classes are ideals;
Section \ref{sc} deals with the notions of coherence and
compatibility in this framework and, finally, Section \ref{ht} is devoted to show that the class of multiple Cohen strongly $p$-summing polynomials is a global holomorphy type.

Although most authors, probably all, define the Cohen strongly $p$-summing
multilinear operators through inequalities, we prefer to define them primarily
using sequences, and then we show the equivalent characterizations via
inequalities. This approach has some advantages in the statements of many of
our results.

\vspace{0,15cm}

\section{Linear Cohen strongly summing operators}\label{ol}

Let us begin with a more detailed study of the class of Cohen strongly
$p$-summing operators, defined in \cite{cohen73}.

\vspace{0,15cm}

\begin{definition}
[Cohen, \cite{cohen73}]\label{defcohen} A sequence $(x_{i})_{i=1}^{\infty}$ in
a Banach space $E$ is Cohen strongly $p$-summing if the series $\sum
_{i=1}^{\infty}\varphi_{i}(x_{i})$ converges for all $(\varphi_{i}%
)_{i=1}^{\infty}\in l_{p^{*}}^{w}(E^{^{\prime}})$, with $1/p+1/p^{*}=1$.
\end{definition}

We denote by $l_{p}\langle E\rangle$ the space of Cohen strongly $p$-summing
sequences in $E$. It is possible, for reasons of management, to replace the
series $\sum_{i=1}^{\infty}\varphi_{i}(x_{i})$ in Definition \ref{defcohen} by
the series $\sum_{i=1}^{\infty}|\varphi_{i}(x_{i})|$, which we use in our
text. The proof of this result is performed by a straightforward calculation.

\vspace{0,15cm}

\begin{proposition}
Let $(x_{i})_{i=1}^{\infty}$ be a sequence in $E$. The series $\sum
_{i=1}^{\infty}\varphi_{i}(x_{i})$ converges for all $(\varphi_{i}%
)_{i=1}^{\infty}\in l_{p^{\ast}}^{w}(E^{^{\prime}})$ if and only if the series
$\sum_{i=1}^{\infty}|\varphi_{i}(x_{i})|$ converges for all $(\varphi
_{i})_{i=1}^{\infty}\in l_{p^{\ast}}^{w}(E^{^{\prime}})$.
\end{proposition}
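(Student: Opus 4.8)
The plan is to prove both implications. The forward direction (convergence of $\sum \varphi_i(x_i)$ implies convergence of $\sum |\varphi_i(x_i)|$) is the substantive one, while the reverse is trivial since absolute convergence implies convergence.

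Key insight: the main trick is a "sign rotation" argument. Given any $(\varphi_i) \in l_{p^*}^w(E')$, I want to make the terms $\varphi_i(x_i)$ nonnegative. For each $i$, choose a scalar $\alpha_i$ with $|\alpha_i| = 1$ such that $\alpha_i \varphi_i(x_i) = |\varphi_i(x_i)|$. The key observation is that the rotated sequence $(\alpha_i \varphi_i)$ still belongs to $l_{p^*}^w(E')$ with the same weak-$\ell_{p^*}$ norm, since multiplying each functional by a unimodular scalar doesn't change $\sup_{\|x\|\le 1} (\sum_i |\alpha_i \varphi_i(x)|^{p^*})^{1/p^*}$.

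Here is how I would carry it out. First I would note the reverse implication follows immediately from the fact that absolute convergence of a scalar series implies convergence. For the forward implication, assume $\sum \varphi_i(x_i)$ converges for every $(\varphi_i) \in l_{p^*}^w(E')$. Fix an arbitrary $(\varphi_i) \in l_{p^*}^w(E')$ and define $\alpha_i$ as above (taking $\alpha_i = 1$ when $\varphi_i(x_i) = 0$). Then I would verify that $(\alpha_i \varphi_i)_i$ lies in $l_{p^*}^w(E')$ by checking the weak norm is unchanged. Applying the hypothesis to this rotated sequence, the series $\sum_i \alpha_i \varphi_i(x_i) = \sum_i |\varphi_i(x_i)|$ converges, which is exactly what we want.

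I expect the only delicate point to be the verification that the rotated sequence remains weakly $p^*$-summable. This is genuinely routine, since $|\alpha_i \varphi_i(x)| = |\varphi_i(x)|$ for every $x \in E$, so the defining supremum computing the weak norm is literally identical term by term. Thus the obstacle is essentially cosmetic, and this matches the paper's remark that the proof is "a straightforward calculation." One should only take mild care when the scalar field is complex, ensuring $\alpha_i$ is chosen as a unimodular complex number rather than merely a sign.
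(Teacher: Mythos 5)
Your proof is correct and is precisely the ``straightforward calculation'' the paper alludes to without writing out (the paper omits the argument entirely): choosing unimodular scalars $\alpha_{i}$ with $\alpha_{i}\varphi_{i}(x_{i})=|\varphi_{i}(x_{i})|$, noting that $(\alpha_{i}\varphi_{i})_{i=1}^{\infty}$ stays in $l_{p^{\ast}}^{w}(E^{\prime})$ with the same weak norm since $|\alpha_{i}\varphi_{i}(x)|=|\varphi_{i}(x)|$ for every $x$, and applying the hypothesis to this rotated sequence gives convergence of $\sum_{i=1}^{\infty}|\varphi_{i}(x_{i})|$, while the converse is immediate from absolute convergence. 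Your remark about taking $\alpha_{i}$ unimodular rather than a mere sign in the complex case is exactly the right (and only) point of care.
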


Is not difficult to prove that $l_{p}\langle E\rangle$ is a normed space with
the norm
\[
||(x_{i})_{i=1}^{\infty}||_{C,p} = \sup_{||(\varphi_{i})_{i=1}^{\infty
}||_{w,p^{*}} \leq1}\sum_{i=1}^{\infty}|\varphi_{i}(x_{i})|
\]
and furthermore the duality $\left(  l_{p}(E)\right)  ^{^{\prime}}=l_{p^{*}}(E^{^{\prime}})$,
with $1/p + 1/p^{*}=1$, leads to following result

\vspace{0,15cm}

\begin{proposition}[Cohen, \cite{cohen73}]
\label{inclusoes} If $1\leq p\leq\infty$, then $l_{p}\langle E\rangle\subset
l_{p}(E)\subset l_{p}^{w}(E)$. Moreover, if $p=1$ then $l_{p}\langle
E\rangle=l_{p}(E)$ and if $p=\infty$ then $l_{p}(E)=l_{p}^{w}(E)\label{equal1}$ isometrically.
\end{proposition}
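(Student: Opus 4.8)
The plan is to establish the two inclusions first and then settle the boundary cases, keeping track of the relevant norm inequalities throughout so that the \emph{isometrically} clause is delivered together with the set-theoretic statements rather than verified separately.

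For the second inclusion $l_{p}(E)\subset l_{p}^{w}(E)$, which I expect to be the easiest, I would argue directly: given $(x_{i})\in l_{p}(E)$ and $\varphi\in E'$ with $\|\varphi\|\leq 1$, the termwise estimate $\sum_{i}|\varphi(x_{i})|^{p}\leq\sum_{i}\|x_{i}\|^{p}$ holds, and taking the supremum over the unit ball of $E'$ yields $\|(x_{i})\|_{w,p}\leq\|(x_{i})\|_{p}$.

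The first inclusion $l_{p}\langle E\rangle\subset l_{p}(E)$ is the main point. The idea is to manufacture, from an arbitrary scalar sequence in the unit ball of $l_{p^{\ast}}$, a competitor in the unit ball of $l_{p^{\ast}}^{w}(E')$. Concretely, fix $(a_{i})\in l_{p^{\ast}}$ with $\|(a_{i})\|_{p^{\ast}}\leq 1$; for each $i$ use Hahn--Banach to choose $\psi_{i}\in E'$ with $\|\psi_{i}\|\leq 1$ and $\psi_{i}(x_{i})=\|x_{i}\|$, and set $\varphi_{i}:=a_{i}\psi_{i}$. For $\|x\|\leq 1$ one has $|\varphi_{i}(x)|\leq|a_{i}|$, so $\|(\varphi_{i})\|_{w,p^{\ast}}\leq\|(a_{i})\|_{p^{\ast}}\leq 1$; hence $\sum_{i}|a_{i}|\,\|x_{i}\|=\sum_{i}|\varphi_{i}(x_{i})|\leq\|(x_{i})\|_{C,p}$. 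Letting $(a_{i})$ range over the unit ball of $l_{p^{\ast}}$ and invoking the scalar duality $(l_{p^{\ast}})'=l_{p}$, the left-hand side becomes the $l_{p}$ norm of the scalar sequence $(\|x_{i}\|)_{i}$, that is $\|(x_{i})\|_{p}$, giving $\|(x_{i})\|_{p}\leq\|(x_{i})\|_{C,p}$ and the inclusion. I expect this reduction to the scalar duality, together with the correct bookkeeping on $\|(\varphi_{i})\|_{w,p^{\ast}}$, to be the one genuinely delicate step.

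It remains to treat the endpoints. For $p=1$ (so $p^{\ast}=\infty$) the reverse inclusion $l_{1}(E)\subset l_{1}\langle E\rangle$ uses $\|(\varphi_{i})\|_{w,\infty}=\sup_{i}\|\varphi_{i}\|$, which gives $\sum_{i}|\varphi_{i}(x_{i})|\leq(\sup_{i}\|\varphi_{i}\|)\sum_{i}\|x_{i}\|\leq\|(\varphi_{i})\|_{w,\infty}\|(x_{i})\|_{1}$ and hence $\|(x_{i})\|_{C,1}\leq\|(x_{i})\|_{1}$; combined with the first inclusion this is the isometric identity. For $p=\infty$ the identity is immediate from Hahn--Banach, since $\|(x_{i})\|_{w,\infty}=\sup_{i}\sup_{\|\varphi\|\leq 1}|\varphi(x_{i})|=\sup_{i}\|x_{i}\|=\|(x_{i})\|_{\infty}$, so the two spaces coincide isometrically.
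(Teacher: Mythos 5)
Your proof is correct, and it is more self-contained than what the paper offers: the paper states this proposition without proof, attributing it to Cohen and remarking only that it ``follows from'' a duality for the spaces of $p$-summable sequences, whereas you prove everything from scratch. Your key step --- building a competitor $(\varphi_{i})=(a_{i}\psi_{i})$ in the unit ball of $l_{p^{\ast}}^{w}(E^{\prime})$ from a scalar sequence $(a_{i})\in B_{l_{p^{\ast}}}$ and norming functionals $\psi_{i}$, then recovering $\left\Vert (\Vert x_{i}\Vert)_{i}\right\Vert _{p}$ by taking the supremum --- is exactly the elementary argument that replaces the duality machinery, and it delivers the norm inequality $\Vert\cdot\Vert_{p}\leq\Vert\cdot\Vert_{C,p}$ along with the inclusion; the paper's route via $\left(l_{p}(E)\right)^{\prime}$ buys brevity but hides these quantitative statements (and, as displayed in the paper, that vector-valued duality is in any case delicate, holding isometrically only under extra hypotheses such as the Radon--Nikod\'{y}m property, so your avoidance of it is a genuine advantage). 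One small caution in your write-up: when $p=1$ the phrase ``invoking the scalar duality $(l_{p^{\ast}})^{\prime}=l_{p}$'' is not literally available, since $(l_{\infty})^{\prime}\neq l_{1}$; what you actually use, and all you need for every $1\leq p\leq\infty$, is the norming identity $\left\Vert (b_{i})\right\Vert _{p}=\sup\left\{ \sum_{i}|a_{i}|\,b_{i}:\Vert(a_{i})\Vert_{p^{\ast}}\leq1\right\} $ for nonnegative $(b_{i})$ (for $p^{\ast}=\infty$ take $a_{i}\equiv1$), so you should phrase it that way. Your treatment of the endpoints, via $\Vert(\varphi_{i})\Vert_{w,\infty}=\sup_{i}\Vert\varphi_{i}\Vert$ for $p=1$ and Hahn--Banach for $p=\infty$, correctly yields both isometric identities, and your implicit use of the finiteness of $\Vert\cdot\Vert_{C,p}$ is legitimate since the paper establishes that $l_{p}\langle E\rangle$ is a normed space before stating the proposition.
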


\vspace{0,15cm}
Note that if $T\in\mathcal{L}(E;F)$, then the operator
\[
\widehat{T}^{s}:l_{p}\left(  E\right)  \rightarrow l_{p}(F)\ \mathrm{defined}%
\ \mathrm{by}\ \left(  x_{i}\right)  _{i=1}^{\infty}\mapsto\left(  T\left(
x_{i}\right)  \right)  _{i=1}^{\infty},
\]
is well-defined and continuous. In our context, the interesting case occurs
when this type of correspondence induces a continuous operator from $l_{p}(E)$
to $l_{p}\langle F\rangle$, which motivates the definition of a Cohen strongly $p$-summing operator.

\vspace{0,15cm}

\begin{definition}
Let $1 < p \leq\infty$. An operator $T \in\mathcal{L}(E;F)$ is Cohen strongly
$p$-summing if $\left(  T(x_{i})\right)  _{i=1}^{\infty}\in l_{p}\langle
F\rangle$ whenever $(x_{i})_{i=1}^{\infty}\in l_{p}(E)$, that is, if the
operator
\begin{equation*}\widehat{T}\ :l_{p}\left(  E\right)  \rightarrow l_{p}\langle
F\rangle\ ; \left(  x_{i}\right)  _{i=1}^{\infty} \mapsto\left(  T\left(
x_{i}\right)  \right)  _{i=1}^{\infty}%
\end{equation*}
is well-defined.
\end{definition}

\vspace{0cm} We denote by $\mathcal{D}_{p}(E;F)$ the set formed by the Cohen
strongly $p$-summing operators. It is easy to show that $\mathcal{D}_{p}(E;F)$
is a subspace of $\mathcal{L}(E;F)$ and, by Proposition \ref{inclusoes},
$\mathcal{D}_{1}(E;F)=\mathcal{L}(E;F)$. The next result (essentially known)
gives some characterizations for the Cohen strongly $p$-summing operators.

\vspace{0,15cm}

\begin{proposition}
\label{prop3} For $T \in\mathcal{L}(E;F)$ and $\frac{1}{p}+\frac{1}{p^{*}}=1$,
the following statements are equivalent:

\vspace{0,15cm} $(i)$ $T$ is Cohen strongly $p$-summing;

$(ii)$ there is a $C>0$ such that
\[
\sum_{i=1}^{\infty}|\varphi_{i}(T(x_{i}))| \leq C\, ||(x_{i})_{i=1}^{\infty
}||_{p} ||(\varphi_{i})_{i=1}^{\infty}||_{w,p^{*}}\ ,
\]
whenever $(x_{i})_{i=1}^{\infty}\in l_{p}(E)$ and $(\varphi_{i})_{i=1}%
^{\infty}\in l_{p^{*}}^{w}(F^{^{\prime}})$;

$(iii)$ there is a $C>0$ such that
\begin{equation}
\label{def3}\sum_{i=1}^{m} |\varphi_{i}(T(x_{i}))| \leq C\, ||(x_{i}%
)_{i=1}^{m}||_{p} ||(\varphi_{i})_{i=1}^{m}||_{w,p^{*}}\ ,
\end{equation}
for all $m \in\mathbb{N},\ x_{i} \in E,\ \varphi_{i} \in F^{^{\prime}%
},\ i=1,...,m$ .
\end{proposition}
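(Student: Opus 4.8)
The plan is to establish the cycle $(i)\Rightarrow(ii)\Rightarrow(iii)\Rightarrow(i)$, the only substantive step being the passage from the purely qualitative statement $(i)$ to the quantitative estimate $(ii)$.

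For $(i)\Rightarrow(ii)$ I would use the closed graph theorem to upgrade the mere well-definedness of $\widehat{T}:l_{p}(E)\to l_{p}\langle F\rangle$ to its continuity. The domain $l_{p}(E)$ is Banach, and one checks that $l_{p}\langle F\rangle$ is complete as well; granting this, it remains to verify that $\widehat{T}$ has closed graph. The key observation is that for each fixed index $k$ the coordinate map $(y_{i})_{i}\mapsto y_{k}$ is continuous on $l_{p}\langle F\rangle$: testing the norm $\|\cdot\|_{C,p}$ against a single functional $\varphi\in B_{F'}$ placed in the $k$-th slot (a sequence whose weak $p^{*}$-norm equals $\|\varphi\|\le 1$) gives $|\varphi(y_{k})|\le\|(y_{i})_{i}\|_{C,p}$, hence $\|y_{k}\|\le\|(y_{i})_{i}\|_{C,p}$. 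The analogous continuity of coordinate evaluation on $l_{p}(E)$ is standard. Therefore, if $x^{(n)}\to x$ in $l_{p}(E)$ and $\widehat{T}(x^{(n)})\to y$ in $l_{p}\langle F\rangle$, then coordinatewise $x_{i}^{(n)}\to x_{i}$, so $T(x_{i}^{(n)})\to T(x_{i})$ by continuity of $T$, while also $T(x_{i}^{(n)})\to y_{i}$; thus $y_{i}=T(x_{i})$ and the graph is closed. Continuity of $\widehat{T}$ then yields $\|(T(x_{i}))_{i}\|_{C,p}\le\|\widehat{T}\|\,\|(x_{i})_{i}\|_{p}$, and unwinding the definition of $\|\cdot\|_{C,p}$ together with its homogeneity in $(\varphi_{i})_{i}$ produces the estimate of $(ii)$ with $C=\|\widehat{T}\|$.

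The implication $(ii)\Rightarrow(iii)$ is immediate: given finite families $x_{1},\dots,x_{m}$ and $\varphi_{1},\dots,\varphi_{m}$, I would pad them with zeros to obtain sequences in $l_{p}(E)$ and $l_{p^{*}}^{w}(F')$ whose norms coincide with the norms of the finite families, so that $(ii)$ specializes to $(iii)$. For $(iii)\Rightarrow(i)$ I would fix $(x_{i})_{i}\in l_{p}(E)$ and $(\varphi_{i})_{i}\in l_{p^{*}}^{w}(F')$ and apply the finite inequality of $(iii)$ to the truncations, bounding every partial sum by $C\,\|(x_{i})_{i}\|_{p}\,\|(\varphi_{i})_{i}\|_{w,p^{*}}$. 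Since $\sum_{i}|\varphi_{i}(T(x_{i}))|$ is a series of nonnegative terms with uniformly bounded partial sums, it converges; as $(\varphi_{i})_{i}$ was arbitrary, $(T(x_{i}))_{i}\in l_{p}\langle F\rangle$, so $\widehat{T}$ is well-defined and $T$ is Cohen strongly $p$-summing.

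I expect the completeness of $l_{p}\langle F\rangle$, which licenses the closed graph theorem in the step $(i)\Rightarrow(ii)$, to be the only delicate point of the argument; the remaining implications are either a truncation or a direct unwinding of the relevant definitions.
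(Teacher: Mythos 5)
Your argument is correct, but its core step $(i)\Rightarrow(ii)$ takes a genuinely different route from the paper's. You apply the classical \emph{linear} closed graph theorem to $\widehat{T}\colon l_{p}(E)\to l_{p}\langle F\rangle$, which forces you to know that $\bigl(l_{p}\langle F\rangle,\|\cdot\|_{C,p}\bigr)$ is complete --- the point you rightly single out as delicate. That fact is true (it goes back to Cohen, and can be proved by a routine Cauchy-sequence argument built on exactly the coordinate estimate $\|y_{k}\|\le\|(y_{i})_{i}\|_{C,p}$ you establish, or via the identification of $l_{p}\langle F\rangle$ with a projective tensor product), but in your write-up it is asserted rather than proved, so a finished version would have to supply it. The paper sidesteps the issue entirely: it applies a closed graph argument to the \emph{bilinear} map $\widetilde{T}\colon l_{p^{*}}^{w}(F')\times l_{p}(E)\to l_{1}$, $\bigl((\varphi_{i})_{i},(x_{i})_{i}\bigr)\mapsto\bigl(\varphi_{i}(T(x_{i}))\bigr)_{i}$, whose codomain $l_{1}$ is unproblematically complete, so the estimate of $(ii)$ with $C=\|\widetilde{T}\|$ drops out with no analysis of $\|\cdot\|_{C,p}$ at all; the price is invoking the closed graph theorem for bilinear mappings rather than the classical linear one. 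In short, you trade a bilinear closed graph theorem for the completeness of $l_{p}\langle F\rangle$, and your coordinate-functional estimate reproves in passing the coordinatewise continuity that the paper implicitly gets from Proposition \ref{inclusoes}. The remaining implications agree in substance: your $(ii)\Rightarrow(iii)$ by zero-padding is the paper's ``immediate,'' and your direct $(iii)\Rightarrow(i)$ by truncation and monotone partial sums is the same supremum-over-$m$ computation the paper packages as $(iii)\Rightarrow(ii)$ followed by $(ii)\Rightarrow(i)$.
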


\begin{proof}
$(i)\Rightarrow(ii)$ Since $T$ is Cohen strongly $p$-summing, the operator
\begin{align*}
\widetilde{T}:\ \  &  l_{p^{\ast}}^{w}(F^{^{\prime}})\times l_{p}%
(E)\rightarrow l_{1}\\
&  \left(  (\varphi_{i})_{i=1}^{\infty},(x_{i})_{i=1}^{\infty}\right)
\mapsto\left(  \varphi_{i}(T(x_{i}))\right)  _{i=1}^{\infty}\ ,
\end{align*}
is well-defined and bilinear. A simple calculation shows that $\widetilde{T}$
has closed graph and hence is continuous. Hence,
\begin{equation*}
\sum_{i=1}^{\infty}|\varphi_{i}(T(x_{i}))|=\left\Vert \widetilde{T}%
((\varphi_{i})_{i=1}^{\infty},(x_{i})_{i=1}^{\infty})\right\Vert _{l_{1}}%
\leq\left\Vert \widetilde{T}\right\Vert \,||(x_{i})_{i=1}^{\infty}%
||_{p}||(\varphi_{i})_{i=1}^{\infty}||_{w,p^{\ast}}\ .
\end{equation*}

$(iii) \Rightarrow(ii)$ Let $(x_{i})_{i=1}^{\infty}\in l_{p}(E)$ and
$(\varphi_{i})_{i=1}^{\infty}\in l_{p^{*}}^{w}(F^{^{\prime}})$. Then,
\begin{align*}
\sum_{i=1}^{\infty}|\varphi_{i}(T(x_{i}))|  &  = \sup_{m} \left(  \sum
_{i=1}^{m} |\varphi_{i}(T(x_{i}))| \right) \\
&  \leq\sup_{m} \left(  C\, ||(x_{i})_{i=1}^{m}||_{p} ||(\varphi_{i}%
)_{i=1}^{m}||_{w,p^{*}} \right) \\
&  = C\, ||(x_{i})_{i=1}^{\infty}||_{p} ||(\varphi_{i})_{i=1}^{\infty
}||_{w,p^{*}}.
\end{align*}

$(ii) \Rightarrow(i)$ and $(ii) \Rightarrow(iii)$ are immediate.
\end{proof}

The smallest $C$ such that the inequality (\ref{def3}) is satisfied defines a
norm in $\mathcal{D}_{p}(E;F)$, denoted $d_{p}(\cdot)$. Furthermore, we have
$\left\Vert \widetilde{T}\right\Vert = d_{p}(T)$.

\vspace{0,15cm}

\section{Polynomial and multilinear Cohen strongly summing operators}\label{om}

In this section we study the class of Cohen strongly $p$-summing multilinear
operators; it is a natural extension of the linear case, based in the idea and
principles formulated by J. S. Cohen. The definition of this class is built
via sequences although the most common definition, such as in \cite{achour07}, is done by inequalities. The main results of this section are Theorem \ref{teo2} and Theorem \ref{teoabs}.

\vspace{0,15cm}

\begin{definition}
Let $1<p < \infty$ and $E_{j},F$ Banach spaces, $j=1,...,n$.
An operator $T\in\mathcal{L}(E_{1},...,E_{n};F)$ is Cohen strongly $p$-summing
if $\left(  T(x_{i}^{(1)},...,x_{i}^{(n)})\right)  _{i=1}^{\infty}\in
l_{p}\langle F\rangle$ whenever $\left(  x_{i}^{(j)}\right)  _{i=1}^{\infty
}\in l_{np}(E_{j})$, $j=1,...,n$, that is, the operator
\begin{align*}
\widehat{T}\  &  \ :l_{np}\left(  E_{1}\right)  \times\cdots\times
l_{np}\left(  E_{n}\right)  \rightarrow l_{p}\langle F\rangle \\
&  \left(  (x_{i}^{(1)})_{i=1}^{\infty},...,(x_{i}^{(n)})_{i=1}^{\infty
}\right)  \mapsto\left(  T\left(  x_{i}^{(1)},...,x_{i}^{(n)}\right)  \right)
_{i=1}^{\infty}
\end{align*}
is well-defined.
\end{definition}

The set of all $n$-linear Cohen strongly $p$-summing operators from $E_1 \times \cdots \times E_n$ to $F$ is denoted by $\mathcal{L}_{Coh,p}(E_{1},...,E_{n};F)$. Is simple to show that
$\mathcal{L}_{Coh,p}(E_{1},...,E_{n};F)$, provided with the usual operations,
is a subspace of $\mathcal{L}(E_{1},...,E_{n};F)$. The next result is folklore and we just sketch the proof.

\vspace{0,15cm}
\begin{proposition}
\label{prop4} For $T \in\mathcal{L}(E_{1},...,E_{n};F)$ and $1/p + 1/p^{*}
=1$, the following statements are equivalent:

\vspace{0,15cm} $(i)$ $T$ is Cohen strongly $p$-summing;

$(ii)$ there is a $C>0$ such that%

\begin{equation*}
\sum_{i=1}^{\infty}|\varphi_{i}(T(x_{i}^{(1)},...,x_{i}^{(n)}))|
\leq C \left(  \sum_{i=1}^{\infty}||x_{i}^{(1)}||^{np}\right)  ^{1/np} ...
\ \left(  \sum_{i=1}^{\infty}||x_{i}^{(n)}||^{np} \right)  ^{1/np}
\ ||(\varphi_{i})_{i=1}^{\infty}||_{w,p^{*}}\ ,
\end{equation*}
whenever $\left(  x_{i}^{(j)}\right)  _{i=1}^{\infty}\in l_{np}(E_{j})
\ ,\ j=1,...,n\ $ and $(\varphi_{i})_{i=1}^{\infty}\in l_{p^{*}}%
^{w}(F^{^{\prime}})$.

$(iii)$ there is a $C>0$ such that
\begin{equation}
\label{def1}\sum_{i=1}^{m} |\varphi_{i}(T(x_{i}^{(1)},...,x_{i}^{(n)}))| \leq
C \left(  \sum_{i=1}^{m} ||x_{i}^{(1)}||^{np} \right)  ^{1/np} ... \ \left(
\sum_{i=1}^{m} ||x_{i}^{(n)}||^{np} \right)  ^{1/np} \ ||(\varphi_{i}%
)_{i=1}^{m}||_{w,p^{*}}\ ,
\end{equation}
for all $m \in\mathbb{N},\ x_{i}^{(j)} \in E_{j},\ \varphi_{i} \in
F^{^{\prime}},\ i=1,...,m\ , \ j=1,...,n$ .
\end{proposition}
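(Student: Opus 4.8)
The plan is to mimic the proof of Proposition~\ref{prop3} almost line by line, the only structural change being that the single domain $l_{p}(E)$ is replaced by the product $l_{np}(E_{1})\times\cdots\times l_{np}(E_{n})$ and the auxiliary operator becomes $(n+1)$-linear instead of bilinear. The implications $(ii)\Rightarrow(i)$, $(ii)\Rightarrow(iii)$ and $(iii)\Rightarrow(ii)$ carry essentially no new ideas, so the real content sits in $(i)\Rightarrow(ii)$.

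For $(i)\Rightarrow(ii)$ I would introduce the $(n+1)$-linear map
\[
\widetilde{T}:l_{p^{*}}^{w}(F')\times l_{np}(E_{1})\times\cdots\times l_{np}(E_{n})\longrightarrow l_{1},
\]
defined by
\[
\bigl((\varphi_{i})_{i},(x_{i}^{(1)})_{i},\ldots,(x_{i}^{(n)})_{i}\bigr)\longmapsto\bigl(\varphi_{i}(T(x_{i}^{(1)},\ldots,x_{i}^{(n)}))\bigr)_{i}.
\]
Hypothesis $(i)$ says precisely that $(T(x_{i}^{(1)},\ldots,x_{i}^{(n)}))_{i}\in l_{p}\langle F\rangle$, and by the definition of $l_{p}\langle F\rangle$ this forces the scalar series $\sum_{i}|\varphi_{i}(T(x_{i}^{(1)},\ldots,x_{i}^{(n)}))|$ to converge for every $(\varphi_{i})_{i}\in l_{p^{*}}^{w}(F')$; hence $\widetilde{T}$ indeed takes values in $l_{1}$ and is well defined, and it is clearly $(n+1)$-linear. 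I would then verify that $\widetilde{T}$ has closed graph and invoke the closed graph theorem for multilinear maps to deduce that $\widetilde{T}$ is continuous. The inequality in $(ii)$ is read off directly, with the constant $C=\|\widetilde{T}\|$, just as in the linear case.

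The closed-graph verification is the step I expect to be the main obstacle, since it is where the multilinear structure actually intervenes. Here I would use that norm convergence in each factor $l_{np}(E_{j})$ forces coordinatewise convergence of the entries $x_{i}^{(j),k}\to x_{i}^{(j)}$ in $E_{j}$ as $k\to\infty$, and likewise norm convergence in $l_{p^{*}}^{w}(F')$ forces $\varphi_{i}^{k}\to\varphi_{i}$ (the weak norm dominates each individual norm); because $T$ is continuous and each evaluation is continuous in every fixed index $i$, the $i$-th entry of $\widetilde{T}$ applied to the limit coincides with the coordinatewise limit, which in turn matches the $l_{1}$-limit, as $l_{1}$-convergence implies coordinatewise convergence. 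One subtlety worth flagging is that the classical closed graph theorem is stated for linear maps; to apply it to the $(n+1)$-linear $\widetilde{T}$ one either appeals to the multilinear version of the theorem or reduces to the linear case by freezing all but one argument (each partial map then inherits a closed graph, hence is continuous) and upgrades separate continuity to joint continuity. Finally, for $(iii)\Rightarrow(ii)$ I would write $\sum_{i=1}^{\infty}=\sup_{m}\sum_{i=1}^{m}$, apply (\ref{def1}), and let the finite norms increase to the full norms, while $(ii)\Rightarrow(i)$ and $(ii)\Rightarrow(iii)$ are immediate.
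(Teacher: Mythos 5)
Your proposal is correct and follows essentially the same route as the paper: the paper's own proof of Proposition~\ref{prop4} states that $(i)\Rightarrow(ii)$ is analogous to Proposition~\ref{prop3}, using the closed graph theorem for multilinear mappings applied to the $(n+1)$-linear operator $\widetilde{T}$, that $(iii)\Rightarrow(ii)$ repeats the $\sup_m$ argument, and that the remaining implications are immediate --- exactly your structure. Your detailed closed-graph verification via coordinatewise convergence and continuity of $T$ is moreover the same argument the paper later writes out in full for the multiple Cohen case in Proposition~\ref{mCohnorm}, so it fills in the paper's sketch rather than deviating from it.
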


\begin{proof}
$(i) \Rightarrow(ii)$ This proof is analogous to $(i) \Rightarrow(ii)$ in Proposition \ref{prop3} by using the closed graph theorem for multilinear mappings.

$(ii) \Rightarrow(i)$ and $(ii) \Rightarrow(iii)$ are immediate.

$(iii) \Rightarrow(ii)$ Exactly the same argument used in $(iii) \Rightarrow(ii)$ of Proposition \ref{prop3}.
\end{proof}

The smallest $C$ such that (\ref{def1}) is satisfied, denoted by
$||T||_{Coh,p}$ , defines a norm in $\mathcal{L}_{Coh,p}(E_{1},...,E_{n};F)$.
Moreover, we have $\left\Vert \widehat{T}\right\Vert = \left\Vert \tilde{T}\right\Vert = ||T||_{Coh,p}$

\vspace{0,15cm}
\begin{definition}
Let $1 < p < \infty$ and $E,F$ Banach spaces. An $n$-homogeneous polynomial
$P \in\mathcal{P}(^{n}E;F)$ is Cohen strongly $p$-summing if
\[
\left(  P(x_{i})\right)  _{i=1}^{\infty}\in l_{p}\langle F\rangle
\ \ \mathrm{whenever}\ \ (x_{i})_{i=1}^{\infty}\in l_{np}(E) \ .
\]
\end{definition}

\vspace{0,15cm} The set of all $n$-homogeneous Cohen strongly $p$-summing
polynomials from $E$ to $F$ will be denoted by $\mathcal{P}_{Coh,p}(^{n}E;F)$. Is simple to
prove that $\mathcal{P}_{Coh,p}(^{n}E;F)$, provided with the usual operations
is a subspace of $\mathcal{P}(^{n}E;F)$. It follows directly from the
definition and the polarization formula (see \cite{Mu}) that a polynomial $P
\in\mathcal{P}(^{n}E;F)$ is Cohen strongly $p$-summing if and only if
$\check{P} \in\mathcal{L}_{s}(^{n}E;F)$ is Cohen strongly $p$-summing. The next result follows the lines of Proposition \ref{prop4}

\vspace{0,15cm}

\begin{proposition}
For $P \in\mathcal{P}(^{n}E;F)$ and $1/p + 1/p^{*} =1$, the following
statements are equivalent:

\vspace{0,15cm}
$(i)$ $P$ is Cohen strongly $p$-summing;

$(ii)$ there is a $C>0$ such that
\begin{equation*}
\sum_{i=1}^{\infty}|\varphi_{i}(P(x_{i}))| \leq C \left(  \sum_{i=1}^{\infty
}||x_{i}||^{np}\right)  ^{1/p}\, ||(\varphi_{i})_{i=1}^{\infty}||_{w,p^{*}}\ ,
\end{equation*}
whenever $\left(  x_{i}\right)  _{i=1}^{\infty}\in l_{np}(E)$ and
$(\varphi_{i})_{i=1}^{\infty}\in l_{p^{*}}^{w}(F^{^{\prime}})$.

$(iii)$ there is a $C>0$ such that
\begin{equation}
\label{polcoh}\sum_{i=1}^{m} |\varphi_{i}(P(x_{i}))| \leq C \left(  \sum
_{i=1}^{m} ||x_{i}||^{np} \right)  ^{1/p}\, ||(\varphi_{i})_{i=1}%
^{m}||_{w,p^{*}}\ ,
\end{equation}
for all $m \in\mathbb{N},\ x_{i} \in E,\ \varphi_{i} \in F^{^{\prime}%
},\ i=1,...,m$ .

Moreover, the smallest $C$ such that (\ref{polcoh}) is satisfied, denoted by
$||P||_{Coh,p}$ , defines a norm in $\mathcal{P}_{Coh,p}(^{n}E;F)$.
\end{proposition}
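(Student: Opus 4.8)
The plan is to follow the pattern of Proposition~\ref{prop4}, exploiting the fact recorded just above that $P$ is Cohen strongly $p$-summing if and only if its associated symmetric $n$-linear map $\check{P}$ is. The first observation is purely notational: since $\|(x_i)_{i=1}^m\|_{np} = (\sum_{i=1}^m \|x_i\|^{np})^{1/(np)}$, we have $(\sum_{i=1}^m \|x_i\|^{np})^{1/p} = \|(x_i)_{i=1}^m\|_{np}^{\,n}$, so the inequalities in $(ii)$ and $(iii)$ are exactly the \emph{diagonal} specializations of the multilinear inequalities of Proposition~\ref{prop4}, obtained by setting $x_i^{(1)} = \cdots = x_i^{(n)} = x_i$.

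With this in hand the easy implications come first. $(ii) \Rightarrow (i)$ is immediate, since the inequality in $(ii)$ says precisely that $\sum_i |\varphi_i(P(x_i))| < \infty$ for every $(x_i)_{i=1}^\infty \in l_{np}(E)$ and every $(\varphi_i)_{i=1}^\infty \in l_{p^*}^w(F')$, that is, $(P(x_i))_{i=1}^\infty \in l_p\langle F\rangle$. The implication $(ii) \Rightarrow (iii)$ is trivial by truncation, and $(iii) \Rightarrow (ii)$ follows by writing the full series as the supremum over $m$ of its partial sums and passing the bound through the supremum, exactly as in $(iii) \Rightarrow (ii)$ of Proposition~\ref{prop3}.

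The one substantive implication is $(i) \Rightarrow (ii)$. Here I would \emph{not} attempt a closed-graph argument directly on the homogeneous map $(x_i)_{i=1}^\infty \mapsto (P(x_i))_{i=1}^\infty$, because the closed graph theorem is most cleanly available in the multilinear form already used in Proposition~\ref{prop4}, while $P$ itself is not linear. Instead, assuming $(i)$, I invoke the equivalence above to conclude that $\check{P}$ is Cohen strongly $p$-summing, apply Proposition~\ref{prop4} $(i) \Rightarrow (ii)$ to $\check{P}$ to obtain a constant $C$ and the multilinear inequality, and then specialize to the diagonal $x_i^{(1)} = \cdots = x_i^{(n)} = x_i$; the product of the $n$ factors collapses to $\|(x_i)\|_{np}^{\,n} = (\sum_i \|x_i\|^{np})^{1/p}$, yielding $(ii)$ for $P$. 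The step I expect to be the main obstacle is precisely this passage between $P$ and $\check{P}$: the polarization formula multiplies the constant by the polarization factor $n^n/n!$, which alters the value of the best constant but not the class, so it is harmless for the equivalence, though it means the polynomial norm $\|P\|_{Coh,p}$ and the multilinear norm of $\check{P}$ need not coincide.

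Finally, for the concluding norm claim I would verify routinely that the best constant in $(iii)$ is absolutely homogeneous and subadditive in $P$, the latter because the defining inequality is linear in the scalars $\varphi_i(P(x_i))$, and that it separates points: taking $m=1$ in $(iii)$ with best constant $0$ gives $|\varphi(P(x))| \leq 0$ for all $x \in E$ and $\varphi \in F'$, whence $P = 0$.
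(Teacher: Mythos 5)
Your proof is correct and takes essentially the same route as the paper's, which disposes of all three equivalences in a single line by invoking the stated fact that $P$ is Cohen strongly $p$-summing if and only if $\check{P}$ is, and then applying Proposition~\ref{prop4} to $\check{P}$. Your elaborations --- the diagonal specialization $\left( \sum_{i} \|x_i\|^{np} \right)^{n/(np)} = \left( \sum_{i} \|x_i\|^{np} \right)^{1/p}$, the truncation and supremum arguments for $(ii)\Leftrightarrow(iii)$, and the verification of the norm axioms --- merely fill in details the paper leaves implicit, and your side remark that the polarization constant affects only the value of $\|\check{P}\|_{Coh,p}$ relative to $\|P\|_{Coh,p}$, not membership in the class, is accurate and harmless.
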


\begin{proof}
Since $P$ is Cohen strongly $p$-summing if and only if
$\check{P}$ is Cohen strongly $p$-summing all statements follow by using Proposition \ref{prop4} with $\check{P}$.
\end{proof}

\vspace{0,15cm} \noindent\textbf{An abstract result on multilinear Cohen
summing operators}

\vspace{0,20cm} By invoking Proposition \ref{prop4}, we can define a Cohen
strongly $p$-summing multilinear operator by using the statement $(iii)$, in
the same proposition. We mention that many authors, including D. Achour
\cite{achour10} and V. Dimant \cite{dimant03} define a Cohen strongly $p$-summing operator by:

\vspace{0,15cm}
\begin{definition}
[Achour, \cite{achour10}]\label{defx} Let $1 \leq p \leq\infty$, $m
\in\mathbb{N}$, $E_{j},F$ Banach spaces, $j=1,...,n$. A continuous $n$-linear operator $T:E_{1}
\times... \times E_{n} \rightarrow F$ is Cohen
strongly $p$-summing if there exist a constant $C>0$ such that for all $x_{1}%
^{(j)},...,x_{m}^{(j)} \in E_{j}$ and $\varphi_{1},...,\varphi_{m} \in
F^{^{\prime}}$
\begin{equation}
\label{cohen}\sum_{i=1}^{m} |\varphi_{i}(T(x_{i}^{(1)},...,x_{i}^{(n)}))| \leq C
\left(  \sum_{i=1}^{m} \prod_{j=1}^{n} ||x_{i}^{(j)}||^{p} \right)
^{1/p}\ ||(\varphi_{i})_{i=1}^{m}||_{w,p^{*}} \ .
\end{equation}

\end{definition}

\vspace{0.15cm}

With this definition, Achour \cite{achour07} proves a Pietsch Domination
Theorem (\ref{dom}) for this class of operators. Below we show that the
definitions given by (\ref{cohen}) and (\ref{def1}) are in fact equivalent
since both of them are characterized by the same Pietsch Domination Theorem. We stress that since
\[
\left(  \sum_{i=1}^{m} \left(  ||x_{i}^{(1)}||\ ... \ ||x_{i}^{(n)}||\right)
^{p} \right)  ^{1/p} \leq\left(  \sum_{i=1}^{m} ||x_{i}^{(1)}||^{np} \right)
^{1/np} ... \ \left(  \sum_{i=1}^{m} ||x_{i}^{(n)}||^{np} \right)  ^{1/np}\ ,%
\]
the implication (\ref{cohen}) $\Rightarrow$ (\ref{def1}) is obvious. However the implication (\ref{def1}) $\Rightarrow$ (\ref{cohen}) seems not straightforward. The main tool for the proof of this equivalence is the Full General Pietsch Domination Theorem \cite{BPRn, jmaa1, daniel12}.

\vspace{0,15cm} Let $X_{1},...,X_{n},Y$ and $E_{1},...,E_{r}$ arbitrary
non-empty sets, $\mathcal{H}$ a family of operators from $X_{1} \times
\cdots\times X_{n}$ to $Y$. Also be $K_{1},...,K_{t}$ compact Hausdorff
topological spaces, $G_{1},...,G_{t}$ Banach spaces and suppose that the
mappings
\[
\left\{
\begin{array}
[c]{l}%
R_{j}\ :\ K_{j} \times E_{1} \times\cdots\times E_{r} \times G_{j}
\rightarrow[0,\infty)\ , \ j=1,...,t\ ,\\
S\ :\ \mathcal{H} \times E_{1} \times\cdots\times E_{r} \times G_{1}
\times\cdots\times G_{t} \rightarrow[0,\infty)\\
\end{array}
\right.
\]
have the following properties:

\begin{description}
\item[$1$] for each $x^{(l)} \in E_{l}$ and $b \in G_{j}$, with $(j,l)
\in\{1,...,t\} \times\{1,...,r\}$, the mapping
\[
(R_{j})_{x^{(1)},...,x^{(r)},b}\ : \ K_{j} \rightarrow[0,\infty)\ ,
\]
defined by $(R_{j})_{x^{(1)},...,x^{(r)},b}(\varphi) = R_{j}(\varphi
,x^{(1)},...,x^{(r)},b)$, is continuous;

\item[$2$] the following inequalities hold:
\[
\left\{
\begin{array}
[c]{l}%
R_{j}(\varphi,x^{(1)},...,x^{(r)},\eta_{j} b^{(j)}) \leq\eta_{j} R_{j}%
(\varphi,x^{(1)},...,x^{(r)}, b^{(j)})\\
S(f, x^{(1)},...,x^{(r)}, \alpha_{1} b^{(1)},...,\alpha_{t} b^{(t)})
\geq\alpha_{1}...\alpha_{t} S(f, x^{(1)},...,x^{(r)}, b^{(1)},..., b^{(t)})\\
\end{array}
\right.
\]
for all $\varphi\in K_{j},\ x^{(l)} \in E_{l}$ (with $l=1,...,r$), $0 \leq
\eta_{j}, \alpha_{j} \leq1,\ b^{(j)} \in G_{j}$, $j=1,...,t$ and $f
\in\mathcal{H}$.
\end{description}

Under these conditions, we have the following definition and theorem taken from \cite{daniel12}:

\vspace{0,15cm}

\begin{definition}
Let $0<p_{1},...,p_{t},p_{0}<\infty$, with $\frac{1}%
{p_{0}}=\frac{1}{p_{1}}+\cdots+\frac{1}{p_{t}}$. An application $f\,:\,X_{1}%
\times\cdots\times X_{n}\rightarrow Y\in\mathcal{H}$ is $R_{1},...,R_{t}$%
-$S$-abstract $(p_{1},...,p_{t})$-summing if there is a $C>0$ such that
\[
\left(  \sum_{i=1}^{m}\left(  S(T,x_{i}^{(1)},...,x_{i}^{(r)},b_{i}%
^{(1)},...,b_{i}^{(t)})\right)  ^{p_{0}}\right)  ^{1/p_{0}}\leq C\prod
_{k=1}^{t}\sup_{\varphi\in K_{k}}\left(  \sum_{i=1}^{m}R_{k}(\varphi
,x_{i}^{(1)},...,x_{i}^{(r)},b_{i}^{(k)})^{p_{k}}\right)  ^{1/p_{k}}%
\]
for all $x_{1}^{(s)},...,x_{m}^{(s)}\in E_{s},\ b_{1}^{(s)},...,b_{m}^{(s)}\in
G_{l},\ m\in\mathbb{N}$ and $(s,l)\in\{1,...,r\}\times\{1,...,t\}$.
\end{definition}

\vspace{0,15cm}

\begin{theorem}[Full General PDT]
An application $f\in\mathcal{H}$ is $R_{1},...,R_{t}$%
-$S$-abstract $(p_{1},...,p_{t})$-summing if and only if there exist $C>0$ and
Borel probability measures $\mu_{k}$ in $K_{k},\ k=1,...,t$, such that
\[
S(T,x^{(1)},...,x^{(r)},b^{(1)},...,b^{(t)})\leq C\prod_{k=1}^{t}\left(
\int_{K_{k}}R_{k}(\varphi,x^{(1)},...,x^{(r)},b^{(k)})^{p_{k}}d\mu_{k}\right)
^{1/p_{k}}\ ,
\]
for all $x^{(l)}\in E_{l},\ l=1,...,r$ and $b^{(k)}\in G_{k}$, with
$k=1,...,t$.
\end{theorem}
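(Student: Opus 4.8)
This is an abstract Pietsch-type domination theorem, so the plan is to prove the two implications separately: ``domination $\Rightarrow$ summing'' is elementary, while ``summing $\Rightarrow$ domination'' is obtained by a compactness/separation argument of Ky Fan type. Throughout set $\theta_k=p_0/p_k$, so that $\sum_{k=1}^{t}\theta_k=1$, and abbreviate $\underline{x}=(x^{(1)},\dots,x^{(r)})$.

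For the easy implication, assume the integral domination holds with probability measures $\mu_k$ and constant $C$. Given finitely many data $(\underline{x}_i,b_i^{(1)},\dots,b_i^{(t)})$, raise the pointwise estimate to the power $p_0$, using $p_0/p_k=\theta_k$, and sum in $i$:
\[
\sum_i S(f,\underline{x}_i,b_i^{(1)},\dots,b_i^{(t)})^{p_0}\leq C^{p_0}\sum_i\prod_{k=1}^{t}\left(\int_{K_k}R_k(\varphi,\underline{x}_i,b_i^{(k)})^{p_k}\,d\mu_k\right)^{\theta_k}.
\]
Since $\sum_k\theta_k=1$, the generalized H\"older inequality (with exponents $1/\theta_k$) pushes the sum in $i$ inside the product, and using that each $\mu_k$ is a probability measure one bounds $\int_{K_k}\sum_i R_k^{p_k}\,d\mu_k\leq\sup_{\varphi\in K_k}\sum_i R_k^{p_k}$; taking $p_0$-th roots yields exactly the abstract summing inequality. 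This half uses neither the continuity nor the homogeneity hypotheses.

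For the converse I would run the separation scheme on $W=\Lambda_1\times\cdots\times\Lambda_t$, where $\Lambda_k$ is the set of regular Borel probability measures on $K_k$; by the Riesz representation theorem and Banach--Alaoglu, $W$ is convex and compact in the product of the weak-$*$ topologies, and hypothesis \textbf{1} ensures $\varphi\mapsto R_k(\varphi,\underline{x},b^{(k)})^{p_k}\in C(K_k)$, so that $\mu\mapsto\int_{K_k}R_k^{p_k}\,d\mu_k$ is weak-$*$ continuous. For each finite family $D=\{(\underline{x}_i,b_i)\}_{i=1}^{m}$ define $g_D:W\to\mathbb{R}$ by
\[
g_D(\mu)=\frac{1}{C^{p_0}}\sum_i S(f,\underline{x}_i,b_i)^{p_0}-\sum_i\prod_{k=1}^{t}\left(\int_{K_k}R_k(\varphi,\underline{x}_i,b_i^{(k)})^{p_k}\,d\mu_k\right)^{\theta_k},
\]
with $C$ the summing constant. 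As $(a_1,\dots,a_t)\mapsto\prod_k a_k^{\theta_k}$ is concave on $[0,\infty)^t$ and each integral is affine in $\mu$, every $g_D$ is convex and weak-$*$ continuous. The goal is a single $\mu_0\in W$ with $g_D(\mu_0)\leq0$ for all $D$; testing the singletons $D=\{(\underline{x},b)\}$ then gives $S(f,\underline{x},b)\leq C\prod_k(\int_{K_k}R_k^{p_k}\,d\mu_{0,k})^{1/p_k}$, which is precisely the desired domination. Such a $\mu_0$ is produced by Ky Fan's lemma once two facts are verified: that $\{g_D\}$ is a concave family and that $\inf_{\mu\in W}g_D(\mu)\leq0$ for every $D$.

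Concavity of the family is where the standing hypotheses \textbf{1}--\textbf{2} enter: given $D_1,D_2$ and $\lambda\in[0,1]$, rescale the data of $D_1$ by $b_i^{(k)}\mapsto\lambda^{1/(tp_0)}b_i^{(k)}$ and those of $D_2$ by $b_i^{(k)}\mapsto(1-\lambda)^{1/(tp_0)}b_i^{(k)}$ --- all scalars being $\leq1$ --- and let $D_3=D_1\cup D_2$; the super-multiplicativity of $S$ makes the $S$-terms at least $\lambda$ (resp.\ $1-\lambda$) times the originals while the sub-multiplicativity of $R_k$ makes the product-terms at most $\lambda$ (resp.\ $1-\lambda$) times them, the identities $p_k\theta_k=p_0$ and $\prod_k\lambda^{1/(tp_0)}=\lambda^{1/p_0}$ balancing the powers, so $g_{D_3}\geq\lambda g_{D_1}+(1-\lambda)g_{D_2}$. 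The infimum condition $\inf_W g_D\leq0$ --- equivalently $\sup_{\mu\in W}\sum_i\prod_k(\int_{K_k}R_k^{p_k}\,d\mu_k)^{\theta_k}\geq C^{-p_0}\sum_i S_i^{p_0}$ --- is the step I expect to be the main obstacle, because the product renders the functional nonlinear and, unlike the single-exponent case, its supremum is not read off from Dirac measures. The plan there is to linearize via the weighted arithmetic--geometric mean identity $\prod_k a_k^{\theta_k}=\inf\{\sum_k\theta_k c_k a_k:\ c_k>0,\ \prod_k c_k^{\theta_k}=1\}$, to apply a minimax theorem exchanging this infimum with $\sup_{\mu\in W}$ (valid since the integrand is affine in $\mu$ on the compact convex $W$ and convex in the auxiliary weights after a logarithmic change of variable), and to evaluate the resulting inner suprema over each $\Lambda_k$ at points of $K_k$, where the summing inequality --- applied to the correspondingly reweighted data --- supplies the bound. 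Keeping the reweighting compatible with the one-sided homogeneity permitted by \textbf{2}, simultaneously for every subfamily of $D$, is the delicate heart of the argument.
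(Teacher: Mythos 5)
First, a point of reference: the paper does not prove this theorem at all --- it is stated verbatim as imported from \cite{daniel12} ("the following definition and theorem taken from \cite{daniel12}"), so your attempt can only be measured against the proof in that source, which runs Pietsch's classical Ky Fan scheme. Within your proposal, the easy direction is correct and complete (modulo the remark that hypothesis 1 is what makes the integrals meaningful, so it is not quite true that no standing hypothesis is used), and your Ky Fan setup is sound: $W=\prod_k \Lambda_k$ is weak-$*$ compact convex, each $g_D$ is convex and continuous, and your verification of concavity of the family via $b_i^{(k)}\mapsto\lambda^{1/(tp_0)}b_i^{(k)}$ is exactly right --- the identities $p_k\theta_k=p_0$ and $\prod_k\lambda^{1/(tp_0)}=\lambda^{1/p_0}$ do balance, and the scalars are $\leq 1$ so hypothesis 2 applies on both sides. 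Moreover, the step you flag as the ``delicate heart'' does complete along the lines you sketch. The missing computation is this: fix $D$ and weights $c_{ik}>0$ with $\prod_k c_{ik}^{\theta_k}=1$ for each $i$, put $M=\max_{i,k}c_{ik}$, and apply the summing inequality to the data $\bigl(\underline{x}_i,(c_{i1}/M)^{1/p_1}b_i^{(1)},\dots,(c_{it}/M)^{1/p_t}b_i^{(t)}\bigr)$; since $1/p_k=\theta_k/p_0$, the constraint gives $\prod_k(c_{ik}/M)^{1/p_k}=M^{-1/p_0}$, so hypothesis 2 yields $S$-terms $\geq M^{-1}S_i^{p_0}$ after raising to $p_0$, while $R_k(\varphi,\underline{x}_i,(c_{ik}/M)^{1/p_k}b_i^{(k)})^{p_k}\leq (c_{ik}/M)R_k(\varphi,\underline{x}_i,b_i^{(k)})^{p_k}$; the factors $M^{-1}$ cancel, and the weighted AM--GM inequality $\prod_k a_k^{\theta_k}\leq\sum_k\theta_k a_k$ turns the product of suprema into
\[
\sum_i S_i^{p_0}\ \leq\ C^{p_0}\sum_{k=1}^{t}\theta_k\,\sup_{\varphi\in K_k}\sum_i c_{ik}\,R_k(\varphi,\underline{x}_i,b_i^{(k)})^{p_k}.
\]
Taking the infimum over admissible $c$, exchanging it with $\sup_{\mu\in W}$ by Sion's minimax theorem (legitimate: the functional is affine and weak-$*$ continuous in $\mu$ on the compact convex $W$, and convex in $u_{ik}=\log c_{ik}$ on the subspace $\sum_k\theta_k u_{ik}=0$), and using your AM--GM identity $\prod_k A_{ik}^{\theta_k}=\inf_c\sum_k\theta_k c_{ik}A_{ik}$ gives $\sup_{\mu\in W}\sum_i\prod_k A_{ik}(\mu_k)^{\theta_k}\geq C^{-p_0}\sum_i S_i^{p_0}$; since $g_D$ is continuous on the compact $W$, the infimum is attained and $\inf_W g_D\leq 0$ follows.

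The contrast with \cite{daniel12} is instructive. There (following the classical argument for dominated operators, as in \cite{BPRn}) one linearizes \emph{before} Ky Fan: the functions on $W$ carry the affine term $\sum_k\theta_k\int_{K_k}R_k^{p_k}\,d\mu_k$ in place of your geometric mean. Then $\inf_W\leq 0$ is immediate --- evaluate at Dirac measures at the maximizing $\varphi_k$ and apply AM--GM to the right-hand side of the summing inequality --- and no minimax theorem is needed; the price is that the Ky Fan point $\mu_0$ only yields an arithmetic-mean domination, and the stated product domination is recovered at the very end from the singleton inequalities with $b^{(k)}\mapsto\eta_k b^{(k)}$, optimizing $\eta_k\in(0,1]$ (choose $\eta_k^{p_k}$ inversely proportional to $\int R_k^{p_k}d\mu_{0,k}$) via hypothesis 2. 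Your variant instead pays up front: convex product functionals make the infimum condition genuinely harder (as you correctly diagnosed, Diracs alone fail because H\"older runs the wrong way), but the domination then drops straight out of the singletons. Both routes are correct; the classical one is more elementary, and your write-up, as submitted, is a proof \emph{plan} at its decisive step rather than a proof --- though, as shown above, the plan does close.
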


\vspace{0,15cm} Now, we can prove the equivalence between the different approaches to Cohen multilinear operators:

\vspace{0,15cm}
\begin{theorem}\label{teo2}
Let $1 < p < \infty$ and $1/p + 1/p^* = 1$. For $T\in\mathcal{L}(X_{1},...,X_{n};Y)$, the following
statements are equivalent:

\vspace{0cm} $(i)$ there is a $C>0$ such that
\begin{equation*}
\sum_{i=1}^{m}|\varphi_{i}(T(x_{i}^{(1)},...,x_{i}^{(n)}))|\leq C\left(
\sum_{i=1}^{m}\prod_{j=1}^{n}||x_{i}^{(j)}||^{p}\right)  ^{1/p}\ ||(\varphi
_{i})_{i=1}^{m}||_{w,p^{\ast}},
\end{equation*}
for all $m\in\mathbb{N},\ x_{i}^{(j)}\in X_{j},\ \varphi_{i}\in Y^{^{\prime}%
},\ i=1,...,m\ ,\ j=1,...,n$ ;

$(ii)$ there is a $C>0$ such that
\begin{equation}
\label{defq}\sum_{i=1}^{m} |\varphi_{i}(T(x_{i}^{(1)},...,x_{i}^{(n)}))| \leq
C \prod_{j=1}^{n} \left(  \sum_{i=1}^{m} ||x_{i}^{(j)}||^{np} \right)  ^{1/np}
\ ||(\varphi_{i})_{i=1}^{m}||_{w,p^{*}}\ ,
\end{equation}

for all $m\in\mathbb{N},\ x_{i}^{(j)}\in X_{j},\ \varphi_{i}\in Y^{^{\prime}%
},\ i=1,...,m\ ,\ j=1,...,n$ ;

$(iii)$ there are a $C>0$ and a Borel probability measure $\mu$ in $B_{Y^{''}}$
such that
\begin{equation}
|\varphi(T(x_{1},...,x_{n}))|\leq C||x_{1}||\ ...\ ||x_{n}||\left(
\int_{B_{Y^{''}}}|\psi(\varphi)|^{p^{\ast}}d\mu(\psi)\right)  ^{1/{p^{\ast}}}\ ,
\label{dom}%
\end{equation}
for all $x_{j}\in X_{j},\ \varphi\in Y^{^{\prime}},\ j=1,...,n$ .
\end{theorem}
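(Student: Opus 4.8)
The plan is to prove the cycle of implications $(i)\Rightarrow(ii)\Rightarrow(iii)\Rightarrow(i)$, so that the Full General Pietsch Domination Theorem is invoked only in the single step $(ii)\Rightarrow(iii)$, while the other two implications reduce to Hölder's inequality. The implication $(i)\Rightarrow(ii)$ is immediate from the generalized Hölder inequality displayed just before the statement, which shows that the right-hand side of $(i)$ never exceeds that of $(ii)$.

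For $(ii)\Rightarrow(iii)$ I would recast $(ii)$ as an abstract summing condition and read off the domination from the Full General PDT. Concretely, take $\mathcal H=\mathcal L(X_1,\dots,X_n;Y)$, $t=n+1$, $p_0=1$, $p_j=np$ for $j=1,\dots,n$ and $p_{n+1}=p^\ast$ (so that $\tfrac1{p_0}=\sum_{k=1}^{t}\tfrac1{p_k}$); no auxiliary sets $E_l$ are needed (formally take them to be singletons). Choose $K_1=\dots=K_n$ to be one-point spaces and $K_{n+1}=B_{Y''}$ with its weak-$\ast$ topology, compact Hausdorff by Banach--Alaoglu, and set $G_j=X_j$ for $j\le n$, $G_{n+1}=Y'$. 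Define
\[
R_j(\ast,x^{(j)})=\|x^{(j)}\|\quad(j=1,\dots,n),\qquad R_{n+1}(\psi,\varphi)=|\psi(\varphi)|,
\]
\[
S\bigl(T,x^{(1)},\dots,x^{(n)},\varphi\bigr)=\bigl|\varphi\bigl(T(x^{(1)},\dots,x^{(n)})\bigr)\bigr|.
\]
The continuity hypothesis holds because, for fixed $\varphi\in Y'$, the map $\psi\mapsto|\psi(\varphi)|$ is weak-$\ast$ continuous. The homogeneity hypotheses hold with equality: each $R_k$ is positively homogeneous of degree one in its $G_k$-variable, and $S$ is homogeneous of degree one in each of the $n+1$ variables, so $S(T,\alpha_1 x^{(1)},\dots,\alpha_{n+1}\varphi)=\alpha_1\cdots\alpha_{n+1}\,S$. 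With these data the abstract summing inequality is exactly $(ii)$, once one recalls that $\|(\varphi_i)_i\|_{w,p^\ast}=\sup_{\psi\in B_{Y''}}(\sum_i|\psi(\varphi_i)|^{p^\ast})^{1/p^\ast}$, and the resulting domination is exactly $(iii)$, since each one-point integral collapses to $\bigl(\int_{K_j}R_j^{\,np}\,d\mu_j\bigr)^{1/np}=\|x^{(j)}\|$, leaving a single integral over $B_{Y''}$.

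For $(iii)\Rightarrow(i)$ I would argue directly. Applying the domination $(iii)$ to each index $i$, summing over $i$, and using Hölder's inequality with the conjugate exponents $p$ and $p^\ast$ gives
\[
\sum_{i=1}^m|\varphi_i(T(x_i^{(1)},\dots,x_i^{(n)}))|\le C\left(\sum_{i=1}^m\prod_{j=1}^n\|x_i^{(j)}\|^{p}\right)^{1/p}\left(\sum_{i=1}^m\int_{B_{Y''}}|\psi(\varphi_i)|^{p^\ast}\,d\mu(\psi)\right)^{1/p^\ast}.
\]
Tonelli's theorem lets me interchange the finite sum and the integral, and since $\mu$ is a probability measure and $\sum_i|\psi(\varphi_i)|^{p^\ast}\le\|(\varphi_i)_i\|_{w,p^\ast}^{p^\ast}$ for every $\psi\in B_{Y''}$, the last factor is at most $\|(\varphi_i)_i\|_{w,p^\ast}$, which is precisely $(i)$.

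The main obstacle is the choice of abstract data in the step $(ii)\Rightarrow(iii)$, and in particular the decision to keep the $n$ norm factors $\|x^{(j)}\|$ separate rather than grouping them into a single factor $\prod_j\|x^{(j)}\|$. Grouping them, which would correspond to feeding $(i)$ directly into the theorem, forces the single $G$-variable carrying the product to be scaled simultaneously, so that $S$ would scale like $\alpha_1^{\,n}\alpha_2$; the required inequality $S(T,\alpha_1 b^{(1)},\alpha_2 b^{(2)})\ge\alpha_1\alpha_2\,S$ would then demand $\alpha_1^{\,n}\ge\alpha_1$, which fails for $n\ge2$ and $0<\alpha_1<1$. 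This is exactly why $(i)$ cannot be inserted into the Full General PDT directly, and why it is routed through $(iii)$ by the elementary Hölder estimate above.
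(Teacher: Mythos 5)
Your proposal is correct and follows essentially the same route as the paper: the same cycle $(i)\Rightarrow(ii)\Rightarrow(iii)\Rightarrow(i)$, with $(i)\Rightarrow(ii)$ by H\"older, $(ii)\Rightarrow(iii)$ by the Full General PDT with the identical instantiation ($t=n+1$, $p_0=1$, $p_k=np$, $p_{n+1}=p^{\ast}$, one-point $K_k$ for $k\leq n$, $K_{n+1}=B_{Y''}$, $G_k=X_k$, $G_{n+1}=Y'$), and $(iii)\Rightarrow(i)$ by the same H\"older--Tonelli--supremum computation. Your closing observation about why grouping the norms into a single factor violates the homogeneity requirement of $S$ is a nice explicit justification of the paper's remark that $(i)$ cannot be fed directly into the PDT, but it does not change the argument.
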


\begin{proof}
$(i)\Rightarrow(ii):$ As mentioned before, we just need to use H\"{o}lder's inequality.

$(ii)\Rightarrow(iii):$ Using the Full General PDT, choosing%

\[
\left\vert
\begin{array}
[c]{l}%
t=n+1\ \text{and}\ \ r=1\\
E_1 = \{0\}\\
K_k = \{0\}\ , k=1,...,n \ \text{and}\ \ K_{n+1} =B_{Y^{^{\prime\prime}}}\\
G_{k}=X_{k},\ \ k=1,...,n,\ \mathrm{and}\ \ G_{n+1}=Y^{^{\prime}}\\
\mathcal{H}=\mathcal{L}(X_{1},...,X_{n};Y)\\
p_{0}=1,\ \ p_{k}=np,\ \ k=1,...,n\ \mathrm{and}\ \ p_{n+1}=p^{\ast}\\
S(T,0,x^{(1)},...,x^{(n)},\varphi)=|\varphi(T(x^{(1)},...,x^{(n)}))|\\
R_{k}(\gamma,0,x^{(k)})=||x^{(k)}||,\ k=1,...,n\\
R_{n+1}(\psi,0,\varphi)=|\psi(\varphi)|\\
\end{array}
\right.
\]
we have
\begin{align*}
\left(  \sum_{i=1}^{m}\left(  S(T,x_{i}^{(1)},...,x_{i}^{(r)},b_{i}%
^{(1)},...,b_{i}^{(t)})\right)  ^{p_{0}}\right)  ^{1/p_{0}}  &  =\left(
\sum_{i=1}^{m}\left(  S(T,0,x_{i}^{(1)},...,x_{i}^{(n)}%
,\varphi_{i})\right)  ^{p_{0}}\right)
^{1/p_{0}}\\
&  =\sum_{i=1}^{m}|\varphi_{i}(T(x_{i}^{(1)},...,x_{i}^{(n)}))|
\end{align*}
and also
\begin{align*}
&  \prod_{k=1}^{n+1}\sup_{\varphi\in K_{k}}\left(  \sum_{i=1}^{m}R_{k}%
(\varphi,x_{i}^{(1)},...,x_{i}^{(r)},b_{i}^{(k)})^{p_{k}}\right)  ^{1/p_{k}%
}=\\
&  =\sup_{\varphi\in K_{n+1}}\left(  \sum_{i=1}^{m}R_{n+1}(\psi,0,
\varphi_{i})^{p_{n+1}}\right)  ^{1/p_{n+1}}\cdot
\prod_{k=1}^{n}\sup_{\varphi\in K_{k}}\left(  \sum_{i=1}^{m}R_{k}(\gamma
,0,x_{i}^{(k)})^{p_{k}}\right)  ^{1/p_{k}}\\
&  =\sup_{\psi\in B_{Y^{^{\prime\prime}}}}\left(  \sum_{i=1}^{m}|\psi\left(
\varphi_{i}\right)  |^{p^{\ast}}\right)  ^{1/{p^{\ast}}}\cdot\prod_{k=1}%
^{n}\left(  \sum_{i=1}^{m}||x_{i}^{(k)}||^{np}\right)  ^{1/np}\ \\
&  =||(\varphi_{i})_{i=1}^{m}||_{w,p^{\ast}}\cdot\ \prod_{k=1}^{n}\left(
\sum_{i=1}^{m}||x_{i}^{(k)}||^{np}\right)  ^{1/np}\ .
\end{align*}

Thus, $T$ satisfies (\ref{defq}) if and only if is $R_{1},...,R_{t}$%
-$S$-abstract $(p_{1},...,p_{t})$-summing. By the Full General PDT there are a constant $C>0$ and Borel probability measures $\mu_{k}$ in
$K_{k},\ k=1,...,t$ such that
\[
S(T,x_{1},...,x_{r},b_{1},...,b_{t})\leq C\prod_{k=1}^{t}\left(  \int_{K_{k}%
}R_{k}(\varphi,x_{1},...,x_{r},b_{k})^{p_{k}}d\mu_{k}\right)  ^{1/p_{k}}\ ,
\]
that is,
\begin{align*}
|\varphi(T(x_{1},...,x_{n}))| &  \leq C\left[  \prod_{k=1}^{n}\left(
\int_{K_{k}}||x_{k}||^{np}d\mu_{k}\right)  ^{1/np}\right]
\ \left(  \int_{B_{Y^{^{\prime\prime}}}}|\psi(\varphi)|^{p^{\ast}}d\mu
(\psi)\right)  ^{1/{p^{\ast}}}\\
&  \leq C||x_{1}||\ ...\ ||x_{n}||\left(  \int_{B_{Y^{^{\prime\prime}}}}%
|\psi(\varphi)|^{p^{\ast}}d\mu(\psi)\right)  ^{1/{p^{\ast}}}\ .
\end{align*}

$(iii)\Rightarrow(i):$ (Theorem 2.4 in \cite{achour07}) For all $m\in
\mathbb{N}$, if $1\leq i\leq m$ we have, by (\ref{dom}),
\[
|\varphi_{i}(T(x_{i}^{(1)},...,x_{i}^{(n)}))|\leq C||x_{i}^{(1)}%
||\ ...\ ||x_{i}^{(n)}||\left(  \int_{B_{Y^{^{\prime\prime}}}}|\psi
(\varphi_{i})|^{p^{\ast}}d\mu(\psi)\right)  ^{1/{p^{\ast}}}\ .
\]
Then,
\begin{align*}
\sum_{i=1}^{m}|\varphi_{i}(T(x_{i}^{(1)},...,x_{i}^{(n)}))| &  \leq
C\sum_{i=1}^{m}\left(  ||x_{i}^{(1)}||\ ...\ ||x_{i}^{(n)}||\left(
\int_{B_{Y^{^{\prime\prime}}}}|\psi(\varphi_{i})|^{p^{\ast}}d\mu(\psi)\right)
^{1/{p^{\ast}}}\right)  \\
&  \leq C\left(  \sum_{i=1}^{m}\left(  ||x_{i}^{(1)}||\ ...\ ||x_{i}%
^{(n)}||\right)  ^{p}\right)  ^{1/p}\left(  \sum_{i=1}^{m}\left(
\int_{B_{Y^{^{\prime\prime}}}}|\psi(\varphi_{i})|^{p^{\ast}}d\mu(\psi)\right)
\right)  ^{1/{p^{\ast}}}\\
&  =C\left(  \sum_{i=1}^{m}\left(  ||x_{i}^{(1)}||\ ...\ ||x_{i}%
^{(n)}||\right)  ^{p}\right)  ^{1/p}\left(  \int_{B_{Y^{^{\prime\prime}}}}%
\sum_{i=1}^{m}|\psi(\varphi_{i})|^{p^{\ast}}d\mu(\psi)\right)  ^{1/{p^{\ast}}%
}\\
&  \leq C\left(  \sum_{i=1}^{m}\left(  ||x_{i}^{(1)}||\ ...\ ||x_{i}%
^{(n)}||\right)  ^{p}\right)  ^{1/p}\left(  \sup_{\psi\in B_{Y^{^{\prime
\prime}}}}\sum_{i=1}^{m}|\psi(\varphi_{i})|^{p^{\ast}}\right)  ^{1/{p^{\ast}}%
}\\
&  =C\left(  \sum_{i=1}^{m}\left(  ||x_{i}^{(1)}||\ ...\ ||x_{i}%
^{(n)}||\right)  ^{p}\right)  ^{1/p}\,||(\varphi_{i})_{i=1}^{m}||_{w,p^{\ast}%
}\ .
\end{align*}
\end{proof}

\vspace{0,15cm}
The previous theorem can be seen as a particular case of the next more general and abstract theorem. The tool used in the proof, again, is the Full General PDT (see \cite{daniel12}).

\vspace{0,15cm}
\begin{theorem}\label{teoabs}
Let $f:X_{1} \times\cdots\times X_{n} \rightarrow Y$ a mapping in
$\mathcal{H}$ and let
\[
0<p^{*},u,s,p_{1},...,p_{t-1},q_{1},...,q_{t-1}<\infty\ ,
\]
such that
\[
\frac{1}{u} = \frac{1}{p_{1}} + \cdots+ \frac{1}{p_{t-1}} + \frac{1}{p^{*}%
}\ \ \mathrm{and}\ \ \frac{1}{s} = \frac{1}{q_{1}} + \cdots+ \frac{1}{q_{t-1}}
+ \frac{1}{p^{*}}.
\]
If $R_{k_{(x_{1},...,x_{r},b)}}(\cdot)$ is constant, for each $x_{1},...,x_{r},b$,
then the following are equivalent:

\vspace{0,15cm} $(i)$ $f$ is $R_{1},...,R_{t}$-$S$-abstract $(p_{1}%
,...,p_{t-1},p^{*})$-summing;

$(ii)$ $f$ is $R_{1},...,R_{t}$-$S$-abstract $(q_{1},...,q_{t-1},p^{*})$-summing;
\end{theorem}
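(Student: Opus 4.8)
The plan is to route both summing conditions through the Full General PDT and then observe that, under the constancy hypothesis, the two resulting domination inequalities collapse to one and the same statement, whence the equivalence is immediate. First I would apply the Full General PDT to statement $(i)$: since $f$ is $R_1,\dots,R_t$-$S$-abstract $(p_1,\dots,p_{t-1},p^{*})$-summing (here the role of $p_0$ is played by $u$), the theorem yields a constant $C>0$ and Borel probability measures $\mu_1,\dots,\mu_t$ on $K_1,\dots,K_t$ such that
\[
S(f,x^{(1)},\dots,x^{(r)},b^{(1)},\dots,b^{(t)}) \leq C \prod_{k=1}^{t}\left(\int_{K_k} R_k(\varphi,x^{(1)},\dots,x^{(r)},b^{(k)})^{p_k}\,d\mu_k\right)^{1/p_k}.
\]

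Next I would exploit the hypothesis that, for the indices $k=1,\dots,t-1$, the map $\varphi\mapsto R_k(\varphi,x^{(1)},\dots,x^{(r)},b^{(k)})$ is constant on $K_k$. Because $\mu_k$ is a \emph{probability} measure, integrating a constant to any positive power and extracting the corresponding root returns the constant itself; hence for each such $k$
\[
\left(\int_{K_k} R_k(\varphi,x^{(1)},\dots,x^{(r)},b^{(k)})^{p_k}\,d\mu_k\right)^{1/p_k} = R_k(\varphi,x^{(1)},\dots,x^{(r)},b^{(k)}),
\]
independently of the exponent $p_k$ and of the measure $\mu_k$. Consequently the domination inequality simplifies to
\[
S(f,x^{(1)},\dots,x^{(r)},b^{(1)},\dots,b^{(t)}) \leq C\left(\prod_{k=1}^{t-1} R_k(\varphi,x^{(1)},\dots,x^{(r)},b^{(k)})\right)\left(\int_{K_t} R_t(\varphi,x^{(1)},\dots,x^{(r)},b^{(t)})^{p^{*}}\,d\mu_t\right)^{1/p^{*}},
\]
in which the exponents $p_1,\dots,p_{t-1}$ no longer appear.

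Finally I would run this observation in reverse. The same collapse shows that for \emph{any} exponents $q_1,\dots,q_{t-1}$ one has $(\int_{K_k} R_k^{\,q_k}\,d\nu_k)^{1/q_k}=R_k$ for every probability measure $\nu_k$. Thus, assigning $\nu_k=\mu_k$ for $k=1,\dots,t-1$ and $\nu_t=\mu_t$, the displayed inequality is literally the domination inequality associated with the $(q_1,\dots,q_{t-1},p^{*})$-summing condition. Applying the converse direction of the Full General PDT then gives that $f$ is $R_1,\dots,R_t$-$S$-abstract $(q_1,\dots,q_{t-1},p^{*})$-summing, proving $(i)\Rightarrow(ii)$; the implication $(ii)\Rightarrow(i)$ follows by exchanging the roles of the $p_k$ and the $q_k$.

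The only point requiring care, rather than any genuine difficulty, is that the Full General PDT produces measures on all the $K_k$, including the indices $k\le t-1$ whose $R_k$ are constant; for the converse application these slots must still be supplied with probability measures, but any choice works since the corresponding integrals are insensitive to it. The exponents $u$ and $s$ are purely bookkeeping devices — they are the values of $p_0$ attached to the two summing conditions and never enter the domination characterization — which is exactly why the two conditions may share the common exponent $p^{*}$ in the last slot and differ only in the collapsed ones.
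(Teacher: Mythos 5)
Your proof is correct and takes essentially the same route as the paper: both apply the Full General PDT to translate each summing condition into a domination inequality, then use the constancy hypothesis together with the fact that the $\mu_{k}$ are probability measures to collapse the first $t-1$ integrals, so that the two conditions reduce to one and the same inequality. The paper phrases this as a single if-and-only-if chain rather than two directional applications of the PDT, and your closing remark that the constant slots may be filled with arbitrary probability measures makes explicit a point the paper leaves implicit.
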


\begin{proof}
By the Full General PDT, $f$ is $R_{1},...,R_{t}$-$S$-abstract $(p_{1}%
,...,p_{t-1},p^{*})$-summing if and only if there are a constant $C$ and Borel probability measures
$\mu_{i}$ in $K_{i}$, $i=1,...,t$ , such that
\[
S(f,x_{1},...,x_{r},b_{1},...,b_{t}) \leq C \prod_{i=1}^{t} \left(
\int_{K_{i}} R_{i}(\varphi,x_{1},...,x_{r},b_{i})^{p_{i}}d\mu_{i}\right)
^{1/p_{i}}\ .
\]
that is, in our case, $f$ is $R_{1},...,R_{t}$-$S$-abstract $(p_{1}%
,...,p_{t-1},p^{*})$-summing if and only if there are a constant $C$ and a Borel probability measure $\mu$
in $K_{t}$ such that
\begin{equation}
\label{domin}S(f,x_{1},...,x_{r},b_{1},...,b_{t}) \leq C \left( \prod_{i=1}^{t-1}
R_{i}(\varphi,x_{1},...,x_{r},b_{i}) \right) \cdot\left(  \int_{K_{t}} R_{t}%
(\varphi,x_{1},...,x_{r},b_{t})^{p^{*}}d\mu\right)  ^{1/p^{*}}\ ,
\end{equation}
since, by hypothesis, for any fixed $\varphi_i \in K_i$
\[
\left(  \int_{K_{i}} R_{i}(\varphi,x_{1},...,x_{r},b_{i})^{p_{i}}d\mu
_{i}\right)  ^{1/p_{i}} = R_{i}(\varphi,x_{1},...,x_{r},b_{i}%
)\ ,\ i=1,...,t-1.
\]
On the other hand, the same reasoning shows that $f$ is $R_{1},...,R_{t}$%
-$S$-abstract $(q_{1},...,q_{t-1},p^{*})$-summing if and only if there are a constant $C$
and a Borel probability measure $\mu$ in $K_{t}$ such that
\[
S(f,x_{1},...,x_{r},b_{1},...,b_{t}) \leq C \left( \prod_{i=1}^{t-1} R_{i}%
(\varphi,x_{1},...,x_{r},b_{i})\right) \cdot\left(  \int_{K_{t}} R_{t}(\varphi
,x_{1},...,x_{r},b_{t})^{p^{*}}d\mu\right)  ^{1/p^{*}}\ ,
\]
expression that corresponds exactly to that given by (\ref{domin}).
\end{proof}

\vspace{0,15cm}
It is worth mentioning a somewhat surprising consequence of Theorem \ref{teoabs} for
linear operators. Let $p^* \in \left(  1,\infty\right)  $ be fixed and
\[
\Gamma=\left\{  \left(  r,q\right)  \in \left.\left[ 1,\infty \right.\right) \times  \left( 1,\infty\right) :\frac
{1}{r}= \frac{1}{q} + \frac{1}{p^{\ast}}\right\}  .
\]
Let $C_{r,q}(E;F)$ be the class of all $T\in\mathcal{L}\left(  E;F\right)  $
so that there is a $C \geq 0$ satisfying
\[
\left(\sum\limits_{j=1}^{m} \left\vert \varphi_{i}\left(  T\left(  x_{i}\right)  \right)  \right\vert
^{r}\right)  ^{1/r}\leq C\left\Vert \left(  x_{i}\right)  _{i=1}
^{m}\right\Vert _{q}\left\Vert \left(  \varphi_{i}\right)  _{i=1}
^{m}\right\Vert _{w,p^{\ast}}
\]
for all $m.$ From Theorem \ref{teoabs} it follows that
\[
C_{r_{1},q_{1}}(E;F)=C_{r_{2},q_{2}}(E;F)
\]
for all $\left(  r_{1},q_{1}\right)  ,\left(  r_{2}.q_{2}\right)  \in\Gamma.$ In particular,
\[
C_{r,q}(E;F)=\mathcal{D}_{p}(E;F)\ ,
\]
with $1 = 1/p + 1/p^*$, for all $\left(  r,q\right) \in\Gamma$.

\vspace{0,15cm}

\section{Multiple Cohen strongly summing multilinear operators}\label{omc}

In this Section we use the successful theory of multiple summing operators (see \cite{ma111, daniel12, perez03}) as a prototype to motivate the forthcoming notion of multiple Cohen strongly $p$-summing operators. As it will be clear along the paper, this new approach is adequate from the viewpoint of polynomial/multilinear ideals and holomorphy.

\vspace{0,15cm}

\begin{definition}
Let $1 < p < \infty$ and $E_{i},F$ be Banach spaces,
$i=1,...,n$, with $1/p + 1/p^{*} =1$. An operator $T \in\mathcal{L}%
(E_{1},...,E_{n};F)$ is multiple Cohen strongly $p$-summing if
\[
\left(  T\left(  x_{j_{1}}^{(1)},...,x_{j_{n}}^{(n)}\right)  \right)
_{j_{1},...,j_{n} \in\mathbb{N}} \in l_{p}\langle F \rangle\ ,\ \ \mathrm{for}\ \mathrm{any}\ \left(  x_{j}^{(i)}\right)  _{j=1}^{\infty}\in l_{p}(E_{i}),\ \ i=1,...,n.
\]
\end{definition}

\vspace{0,15cm}

\begin{remark}
In the above definitions and hereafter, we are identifying the elements of the
set $\mathbb{N}^{n}$ with elements in $\mathbb{N}$, to avoid overloaded
notations. Thus, instead of denoting, for example $a_{j_{1},...,j_{n}} \in
l_{1}(F;\mathbb{N}^{n})$ we write only $a_{j_{1},...,j_{n}} \in l_{1}(F)$.
\end{remark}

\vspace{0,15cm} The class of all multiple Cohen strongly
$p$-summing multilinear mappings is a subspace of $\mathcal{L}(E_{1},...,E_{n};F)$ (it is
easy to show) and will be denoted by $\mathcal{L}_{mCoh,p}(E_{1},...,E_{n};F)$.

\vspace{0,15cm}

\begin{proposition}
\label{mCohnorm} For $T \in\mathcal{L}(E_{1},...,E_{n};F)$ and $1/p + 1/p^{*}
=1$, the following statements are equivalent:

\vspace{0,15cm} $(i)$ $T$ is multiple Cohen strongly $p$-summing;

$(ii)$ there is a $C>0$ such that
\begin{align*}
&  \sum_{j_{1},...,j_{n}=1}^{\infty}\left\vert \varphi_{j_{1},...,j_{n}%
}\left(  T\left(  x_{j_{1}}^{(1)},...,x_{j_{n}}^{(n)}\right)  \right)
\right\vert \\
&  \leq C \left\Vert \left(  x_{j}^{(1)} \right)  _{j=1}^{\infty}\right\Vert
_{p} \cdots\left\Vert \left(  x_{j}^{(n)} \right)  _{j=1}^{\infty}\right\Vert
_{p} \left\Vert \left(  \varphi_{j_{1},...,j_{n}}\right)  _{j_{1},...,j_{n}
\in\mathbb{N}} \right\Vert _{w,p^{*}} \ ,
\end{align*}
for any $(\varphi_{j_{1},...,j_{n}})_{j_{1},...,j_{n} \in\mathbb{N}} \in
l_{p^{*}}^{w}(F^{^{\prime}})$ and $\left(  x_{j}^{(i)}\right)  _{j=1}^{\infty
}\in l_{p}(E_{i})$, $i=1,...,n$.

$(iii)$ there is a $C>0$ such that
\begin{align}
\label{desmCohen} &  \sum_{j_{1},...,j_{n}=1}^{m} \left\vert \varphi
_{j_{1},...,j_{n}}\left(  T\left(  x_{j_{1}}^{(1)},...,x_{j_{n}}^{(n)}\right)
\right)  \right\vert \\
&  \leq C \left\Vert \left(  x_{j}^{(1)} \right)  _{j=1}^{m} \right\Vert _{p}
\cdots\left\Vert \left(  x_{j}^{(n)} \right)  _{j=1}^{m} \right\Vert _{p}
\left\Vert \left(  \varphi_{j_{1},...,j_{n}}\right)  _{j_{1},...,j_{n}=1}^{m}
\right\Vert _{w,p^{*}} \ ,\nonumber
\end{align}
for all $m \in\mathbb{N}, \ \varphi_{j_{1},...,j_{n}} \in F^{^{\prime}}$ and
$x_{j}^{(i)} \in E_{i}$, $i=1,...,n, \ j_{i}=1,...,m, \ j=1,...,m$.

In addition, the smallest of the constants $C$ satisfying (\ref{desmCohen}),
denoted by $||T||_{mCoh,p}$ , defines a norm in $\mathcal{L}_{mCoh,p}%
(E_{1},...,E_{n};F)$.
\end{proposition}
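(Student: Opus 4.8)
The plan is to follow verbatim the pattern of Proposition~\ref{prop3} and Proposition~\ref{prop4}, the only genuinely new ingredient being the bookkeeping for the multi-indexed families, handled through the identification $\mathbb{N}^{n}\cong\mathbb{N}$ fixed in the Remark above. For $(i)\Rightarrow(ii)$ I would introduce the $(n+1)$-linear map
\begin{align*}
\widetilde{T}:\ & l_{p^{*}}^{w}(F^{\prime})\times l_{p}(E_{1})\times\cdots\times l_{p}(E_{n})\rightarrow l_{1}\\
& \left(  (\varphi_{j_{1},...,j_{n}}),(x_{j}^{(1)})_{j},...,(x_{j}^{(n)})_{j}\right)  \mapsto\left(  \varphi_{j_{1},...,j_{n}}\left(  T(x_{j_{1}}^{(1)},...,x_{j_{n}}^{(n)})\right)  \right)  _{j_{1},...,j_{n}}
\end{align*}
and check that it is well defined: hypothesis $(i)$ says precisely that $(T(x_{j_{1}}^{(1)},...,x_{j_{n}}^{(n)}))_{j_{1},...,j_{n}}\in l_{p}\langle F\rangle$, so by Definition~\ref{defcohen} (in the absolute form justified by the proposition following it) the family $(\varphi_{j_{1},...,j_{n}}(T(\cdots)))$ is summable, i.e. lies in $l_{1}$. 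The map is clearly $(n+1)$-linear. To invoke the closed graph theorem for multilinear mappings I would verify that $\widetilde{T}$ has closed graph: if a sequence in the domain converges and its images converge in $l_{1}$, then convergence in each $l_{p}(E_{i})$ and in $l_{p^{*}}^{w}(F^{\prime})$ forces coordinatewise convergence (for the weak factor one uses $\|\varphi_{j}^{(k)}-\varphi_{j}\|\leq\|(\varphi^{(k)}-\varphi)\|_{w,p^{*}}$), and continuity of $T$ together with coordinatewise $l_{1}$-convergence identifies the limit. Continuity of $\widetilde{T}$ then yields $(ii)$ with $C=\|\widetilde{T}\|$. I expect this closed graph verification to be the main obstacle, as it is the only step that is not a formal manipulation: one must be sure that the multi-index family genuinely lands in $l_{1}$ (this is where $(i)$ is used in full strength) and that coordinatewise convergence is correctly extracted from the weak factor $l_{p^{*}}^{w}(F^{\prime})$.

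The remaining implications are routine. $(ii)\Rightarrow(i)$ is immediate, since the inequality in $(ii)$ is exactly the boundedness condition making $(T(x_{j_{1}}^{(1)},...,x_{j_{n}}^{(n)}))_{j_{1},...,j_{n}}$ a Cohen strongly $p$-summing family. The implication $(ii)\Rightarrow(iii)$ follows by restricting to finitely many indices, and $(iii)\Rightarrow(ii)$ is obtained by the supremum argument used in $(iii)\Rightarrow(ii)$ of Proposition~\ref{prop3}: one writes the infinite sum as $\sup_{m}$ of the partial sums, applies $(iii)$ to each, and passes the supremum inside the monotone $l_{p}$ and $l_{p^{*}}^{w}$ norms.

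Finally, for the norm statement I would verify that $\|\cdot\|_{mCoh,p}$, defined as the least constant in (\ref{desmCohen}), is a genuine norm on $\mathcal{L}_{mCoh,p}(E_{1},...,E_{n};F)$. Positive homogeneity and subadditivity are inherited directly from the defining inequality, and definiteness follows by taking $m=1$ in (\ref{desmCohen}): there $\|(x_{j}^{(i)})_{j=1}^{1}\|_{p}=\|x^{(i)}\|$ and $\|(\varphi)\|_{w,p^{*}}=\|\varphi\|_{F^{\prime}}$, so $\|T\|_{mCoh,p}=0$ forces $|\varphi(T(x^{(1)},...,x^{(n)}))|=0$ for all arguments, hence $T=0$. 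The identity $\|\widetilde{T}\|=\|T\|_{mCoh,p}$ then records that the best constant coincides with the operator norm of $\widetilde{T}$.
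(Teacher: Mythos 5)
Your proposal is correct and follows essentially the same route as the paper: the implication $(i)\Rightarrow(ii)$ via the $(n+1)$-linear map $\widetilde{T}$ into $l_{1}$ and the closed graph theorem for multilinear mappings (with coordinatewise convergence extracted from each factor, including the weak factor via $\|\varphi_{j}\|\leq\|(\varphi_{i})_{i}\|_{w,p^{*}}$), and $(iii)\Rightarrow(ii)$ by the supremum-over-partial-sums argument. Your explicit verification of the norm axioms, including definiteness via $m=1$, is a small addition the paper leaves implicit, and it is correct.
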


\begin{proof}
$(i)\Rightarrow(ii)$ Since $T$ is multiple Cohen strongly $p$-summing, the
operator
\[
\widetilde{T}:\ l_{p^{\ast}}^{w}(F^{^{\prime}})\times l_{p}(E_{1})\times
\cdots\times l_{p}(E_{n})\longrightarrow l_{1}%
\]
given by
\[
\left(  (\varphi_{j_{1},...,j_{n}})_{j_{1},...,j_{n}\in\mathbb{N}},\left(
x_{j_{1}}^{(1)}\right)  _{j_{1}=1}^{\infty},...,\left(  x_{j_{n}}%
^{(n)}\right)  _{j_{n}=1}^{\infty}\right)  \longmapsto\left(  \varphi
_{j_{1},...,j_{n}}\left(  T\left(  x_{j_{1}}^{(1)},...,x_{j_{n}}^{(n)}\right)
\right)  \right)  _{j_{1},...,j_{n}\in\mathbb{N}}%
\]
is well-defined and is $(n+1)$-linear.

Let $(x_{k})_{k=1}^{\infty}\in l_{p^{\ast}}^{w}(F^{^{\prime}})\times
l_{p}(E_{1})\times\cdots\times l_{p}(E_{n})$ with
\begin{equation}
\left\{
\begin{array}
[c]{l}%
x_{k}\rightarrow x\in l_{p^{\ast}}^{w}(F^{^{\prime}})\times l_{p}(E_{1}%
)\times\cdots\times l_{p}(E_{n})\\
\widetilde{T}(x_{k})\rightarrow(z_{j_{1},...,j_{n}})_{j_{1},...,j_{n}%
\in\mathbb{N}}\in l_{1}%
\end{array}
\right.  .\label{pa1}%
\end{equation}
Writing
\begin{equation}
\left\{
\begin{array}
[c]{l}%
x_{k}=\left(  (\varphi_{j_{1},...,j_{n}}^{k})_{j_{1},...,j_{n}\in\mathbb{N}%
},\left(  x_{k,j_{1}}^{(1)}\right)  _{j_{1}=1}^{\infty},...,\left(
x_{k,j_{n}}^{(n)}\right)  _{j_{n}=1}^{\infty}\right)  \\
x=\left(  (\varphi_{j_{1},...,j_{n}})_{j_{1},...,j_{n}\in\mathbb{N}},\left(
x_{j_{1}}^{(1)}\right)  _{j_{1}=1}^{\infty},...,\left(  x_{j_{n}}%
^{(n)}\right)  _{j_{n}=1}^{\infty}\right)
\end{array}
\right.  \ ,\label{pa2}%
\end{equation}
we have
\begin{align*}
(z_{j_{1},...,j_{n}})_{j_{1},...,j_{n}\in\mathbb{N}} &  =\lim_{k\rightarrow
\infty}\widetilde{T}(x_{k}) \\
&  =\lim_{k\rightarrow\infty}\left(  \varphi_{j_{1},...,j_{n}}^{k}\left(
T\left(  x_{k,j_{1}}^{(1)},...,x_{k,j_{n}}^{(n)}\right)  \right)  \right)
_{j_{1},...,j_{n}\in\mathbb{N}}
\end{align*}

We need to show that%
\[
\widetilde{T}(x)=(z_{j_{1},...,j_{n}})_{j_{1},...,j_{n}\in\mathbb{N}}.
\]
We have%
\begin{align*}
\widetilde{T}(x)  & =\widetilde{T}\left(  \left(  (\varphi_{j_{1},...,j_{n}%
})_{j_{1},...,j_{n}\in\mathbb{N}},\left(  x_{j_{1}}^{(1)}\right)  _{j_{1}%
=1}^{\infty},...,\left(  x_{j_{n}}^{(n)}\right)  _{j_{n}=1}^{\infty}\right)
\right)  \\
& =\left(  \varphi_{j_{1},...,j_{n}}\left(  T\left(  x_{j_{1}}^{(1)}%
,...,x_{j_{n}}^{(n)}\right)  \right)  \right)  _{j_{1},...,j_{n}\in\mathbb{N}%
}.
\end{align*}
So we need to show that%
\begin{equation}
\varphi_{j_{1},...,j_{n}}\left(  T\left(  x_{j_{1}}^{(1)},...,x_{j_{n}}%
^{(n)}\right)  \right)  =z_{j_{1},...,j_{n}}\label{pop}%
\end{equation}
for all $j_{1},...,j_{n}\in\mathbb{N}$.

But
\begin{equation}
\lim_{k\rightarrow\infty}\varphi_{j_{1},...,j_{n}}^{k}\left(  T\left(
x_{k,j_{1}}^{(1)},...,x_{k,j_{n}}^{(n)}\right)  \right)  =z_{j_{1},...,j_{n}%
}\ \label{pa4}%
\end{equation}
and, on the other hand, from (\ref{pa1}) and (\ref{pa2}) it follows that
\begin{equation}
x_{k,j}^{(i)}\rightarrow x_{j}^{(i)}\ ,\ \mathrm{in}\ E_{i}\ \ \mathrm{and}%
\ \ \varphi_{j_{1},...,j_{n}}^{k}\rightarrow\varphi_{j_{1},...,j_{n}%
}\ \mathrm{in}\ F^{^{\prime}}\ ,\label{pa5}%
\end{equation}
for any $j\in\mathbb{N},\ i=1,...,n$ and $j_{1},...,j_{n}\in\mathbb{N}$. Since
$T$ is continuous, it follows from (\ref{pa5}) that%
\begin{equation}
\lim_{k\rightarrow\infty}\varphi_{j_{1},...,j_{n}}^{k}\left(  T\left(
x_{k,j_{1}}^{(1)},...,x_{k,j_{n}}^{(n)}\right)  \right)  =\varphi
_{j_{1},...,j_{n}}\left(  T\left(  x_{j_{1}}^{(1)},...,x_{j_{n}}^{(n)}\right)
\right)  \ ,\label{pa6}%
\end{equation}
for all $j_{1},...,j_{n}\in\mathbb{N}$. So, from (\ref{pa6}) and (\ref{pa4})
we obtain (\ref{pop}). Thus, $T$ has closed graph and hence is continuous. Thus we have
\begin{align*}
&  \sum_{j_{1},...,j_{n}=1}^{\infty}\left\vert \varphi_{j_{1},...,j_{n}%
}\left(  T\left(  x_{j_{1}}^{(1)},...,x_{j_{n}}^{(n)}\right)  \right)
\right\vert \\
&  = \left\Vert \varphi_{j_{1},...,j_{n}}\left(  T\left(  x_{j_{1}}%
^{(1)},...,x_{j_{n}}^{(n)}\right)  \right)  _{j_{1},...,j_{n} \in\mathbb{N}}
\right\Vert _{1}\\
&  = \left\Vert \widetilde{T}\left(  (\varphi_{j_{1},...,j_{n}})_{j_{1}%
,...,j_{n} \in\mathbb{N}}, \left(  x_{j_{1}}^{(1)}\right)  _{j_{1}=1}^{\infty
},...,\left(  x_{j_{n}}^{(n)}\right)  _{j_{n}=1}^{\infty}\right)  \right\Vert
_{1}\\
&  \leq\left\Vert \widetilde{T}\right\Vert \left\Vert \left(  x_{j}^{(1)}
\right)  _{j=1}^{\infty}\right\Vert _{p} \cdots\left\Vert \left(  x_{j}^{(n)}
\right)  _{j=1}^{\infty}\right\Vert _{p} \left\Vert \left(  \varphi
_{j_{1},...,j_{n}}\right)  _{j_{1},...,j_{n} \in\mathbb{N}} \right\Vert
_{w,p^{*}}\ .
\end{align*}

$(ii) \Rightarrow(i)$ and $(ii) \Rightarrow(iii)$ are immediate.

$(iii) \Rightarrow(ii)$ Let $(\varphi_{j_{1},...,j_{n}})_{j_{1},...,j_{n}
\in\mathbb{N}} \in l_{p^{*}}^{w}(F^{^{\prime}})$ and $\left(  x_{j}%
^{(i)}\right)  _{j=1}^{\infty}\in l_{p}(E_{i})$, $i=1,...,n$. Then,%

\begin{align*}
&  \sum_{j_{1},...,j_{n}=1}^{\infty}\left\vert \varphi_{j_{1},...,j_{n}%
}\left(  T\left(  x_{j_{1}}^{(1)},...,x_{j_{n}}^{(n)}\right)  \right)
\right\vert = \sup_{m} \left(  \sum_{j_{1},...,j_{n}=1}^{m} \left\vert
\varphi_{j_{1},...,j_{n}}\left(  T\left(  x_{j_{1}}^{(1)},...,x_{j_{n}}%
^{(n)}\right)  \right)  \right\vert \right) \\
&  \leq\sup_{m} \left(  C \left\Vert \left(  x_{j}^{(1)} \right)  _{j=1}^{m}
\right\Vert _{p} \cdots\left\Vert \left(  x_{j}^{(n)} \right)  _{j=1}^{m}
\right\Vert _{p} \left\Vert \left(  \varphi_{j_{1},...,j_{n}}\right)
_{j_{1},...,j_{n}=1}^{m} \right\Vert _{w,p^{*}} \right) \\
&  = C \left\Vert \left(  x_{j}^{(1)} \right)  _{j=1}^{\infty}\right\Vert _{p}
\cdots\left\Vert \left(  x_{j}^{(n)} \right)  _{j=1}^{\infty}\right\Vert _{p}
\left\Vert \left(  \varphi_{j_{1},...,j_{n}}\right)  _{j_{1},...,j_{n}
\in\mathbb{N}} \right\Vert _{w,p^{*}}\ .
\end{align*}

\end{proof}

\vspace{0,15cm} The following result shows that the definition of multiple
Cohen strongly $p$-summing operator encompasses the concept of Cohen strongly
$p$-summing operators.

\vspace{0,15cm}

\begin{proposition}
\label{inclusaom} Every Cohen strongly $p$-summing multilinear operator is
multiple Cohen strongly $p$-summing and $||\cdot||_{mCoh,p} \leq
||\cdot||_{Coh,p}$.
\end{proposition}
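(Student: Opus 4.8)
The plan is to avoid comparing the two summing inequalities (\ref{def1}) and (\ref{desmCohen}) head-on. A tempting first attempt is to reindex the multi-indexed sum $\sum_{j_1,\dots,j_n}$ as a single ``diagonal'' sum over $\{1,\dots,m\}^n$ and apply (\ref{def1}); this fails, because each vector $x_{j_k}^{(k)}$ then occurs with multiplicity $m^{n-1}$, producing an unbounded factor $m^{(n-1)/p}$, and moreover the right-hand side of (\ref{def1}) carries $\ell_{np}$-norms rather than the $\ell_p$-norms demanded by (\ref{desmCohen}). Instead I would route the argument through the pointwise integral domination supplied by Theorem \ref{teo2}.

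Concretely, assume $T$ is Cohen strongly $p$-summing. By the equivalence $(ii)\Leftrightarrow(iii)$ of Theorem \ref{teo2} there is a Borel probability measure $\mu$ on $B_{Y''}$ for which the domination (\ref{dom}) holds; since the Full General PDT preserves the optimal constant, it may be taken with $C=\|T\|_{Coh,p}$ (for the norm estimate it even suffices to use any $C>\|T\|_{Coh,p}$ and let $C\downarrow\|T\|_{Coh,p}$). Fixing $m\in\mathbb{N}$, vectors $x_j^{(i)}\in E_i$ and functionals $\varphi_{j_1,\dots,j_n}\in F'$, I would apply (\ref{dom}) to each tuple $(j_1,\dots,j_n)$ and sum, obtaining
\begin{equation*}
\sum_{j_1,\dots,j_n=1}^m \bigl|\varphi_{j_1,\dots,j_n}(T(x_{j_1}^{(1)},\dots,x_{j_n}^{(n)}))\bigr|
\le C\sum_{j_1,\dots,j_n=1}^m \Bigl(\textstyle\prod_{k=1}^n\|x_{j_k}^{(k)}\|\Bigr)\Bigl(\int_{B_{Y''}}|\psi(\varphi_{j_1,\dots,j_n})|^{p^*}\,d\mu\Bigr)^{1/p^*}.
\end{equation*}

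Next I would apply H\"older's inequality with exponents $p$ and $p^*$ to the finite sum over tuples. The first factor becomes $\bigl(\sum_{j_1,\dots,j_n}\prod_{k}\|x_{j_k}^{(k)}\|^{p}\bigr)^{1/p}$, and this is the crux: the summand is separable in the indices, so the sum of products is the product of sums,
\begin{equation*}
\sum_{j_1,\dots,j_n=1}^m \prod_{k=1}^n\|x_{j_k}^{(k)}\|^{p}=\prod_{k=1}^n\sum_{j=1}^m\|x_j^{(k)}\|^{p},
\end{equation*}
so that taking the $1/p$ power yields exactly $\prod_{k=1}^n\|(x_j^{(k)})_{j=1}^m\|_p$ — the product of $\ell_p$-norms required by (\ref{desmCohen}), with no multiplicity loss. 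For the second factor I would interchange the finite sum with the integral, bound the integrand by its supremum over $B_{Y''}$, and use $\mu(B_{Y''})=1$ to reach $\|(\varphi_{j_1,\dots,j_n})\|_{w,p^*}$, exactly as in the step $(iii)\Rightarrow(i)$ of Theorem \ref{teo2}. Multiplying the two factors gives (\ref{desmCohen}) with constant $C$, so by Proposition \ref{mCohnorm} the operator $T$ is multiple Cohen strongly $p$-summing and $\|T\|_{mCoh,p}\le\|T\|_{Coh,p}$.

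The main obstacle is conceptual rather than computational: one must recognize that the two classes cannot be compared at the level of their summing inequalities (the multiplicities together with the $\ell_{np}$ versus $\ell_p$ mismatch block any direct reindexing), and that the correct bridge is the \emph{pointwise} domination (\ref{dom}), whose multiplicative structure factorizes perfectly across the independent summation indices $j_1,\dots,j_n$.
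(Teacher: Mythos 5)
Your proof is correct and takes essentially the same route as the paper's: both pass through the Pietsch domination (\ref{dom}) from Theorem \ref{teo2}, apply it pointwise to each tuple $(j_1,\dots,j_n)$, and then use H\"older's inequality with exponents $p$, $p^*$ together with the factorization $\sum_{j_1,\dots,j_n}\prod_{k=1}^{n}\|x_{j_k}^{(k)}\|^{p}=\prod_{k=1}^{n}\sum_{j=1}^{m}\|x_{j}^{(k)}\|^{p}$ and the sum--integral interchange to reach (\ref{desmCohen}). Your added remark on recovering the sharp constant via $C\downarrow\|T\|_{Coh,p}$ is a small refinement of a point the paper leaves implicit.
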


\begin{proof}
By the Pietsch Domination Theorem, $T \in\mathcal{L}_{Coh,p}(E_{1}%
,...,E_{n};F)$ if and only if there are a constant $C$ and a Borel probability measure $\mu$ in $B_{F^{''}}$
such that
\begin{equation*}
|\varphi(T(x_{1},...,x_{n}))| \leq C ||x_{1}||\ ...\ ||x_{n}|| \left(
\int_{B_{F^{''}}}|\psi(\varphi)|^{p^{*}} d\mu(\psi) \right)  ^{1/{p^{*}}}\ ,
\end{equation*}
for all $x_{j} \in E_{j},\ \varphi\in F^{^{\prime}},\ j=1,...,n$ .

Thus, given $m \in\mathbb{N}$, we have
\[
|\varphi_{j_{1},...,j_{n}}(T(x_{j_{1}}^{(1)},...,x_{j_{n}}^{(n)}))| \leq C
||x_{j_{1}}^{(1)}||\ ...\ ||x_{j_{n}}^{(n)}|| \left(  \int_{B_{F^{''}}}%
|\psi(\varphi_{j_{1},...,j_{n}})|^{p^{*}} d\mu(\psi) \right)  ^{1/{p^{*}}}\ ,
\]
for all $\varphi_{j_{1},...,j_{n}} \in F^{^{\prime}}$ and $x_{j}^{(i)} \in
E_{i}$, $i=1,...,n$, $1 \leq j_{1},...,j_{n} \leq m$. Then,
\begin{align*}
&  \sum_{j_{1},...,j_{n}=1}^{m} |\varphi_{j_{1},...,j_{n}}(T(x_{j_{1}}%
^{(1)},...,x_{j_{n}}^{(n)}))|\\
&  \leq C \sum_{j_{1},...,j_{n}=1}^{m} \left(  ||x_{j_{1}}^{(1)}%
||\ ...\ ||x_{j_{n}}^{(n)}|| \left(  \int_{B_{F^{''}}}|\psi%
(\varphi_{j_{1},...,j_{n}})|^{p^{*}} d\mu(\psi) \right)  ^{1/{p^{*}}}\right) \\
&  \leq C \left(  \sum_{j_{1},...,j_{n}=1}^{m} \left(  ||x_{j_{1}}%
^{(1)}||\ ...\ ||x_{j_{n}}^{(n)}|| \right)  ^{p}\right)  ^{1/p} \left(
\sum_{j_{1},...,j_{n}=1}^{m} \int_{B_{F^{''}}}|\psi%
(\varphi_{j_{1},...,j_{n}})|^{p^{*}} d\mu(\psi) \right)  ^{1/p^{*}}\\
&  = C \left(  \sum_{j=1}^{m} ||x_{j}^{(1)}||^{p} \right)  ^{1/p}
\cdots\left(  \sum_{j=1}^{m} ||x_{j}^{(n)}||^{p} \right)  ^{1/p} \left(
\int_{B_{F^{''}}} \sum_{j_{1},...,j_{n}=1}^{m} |\psi%
(\varphi_{j_{1},...,j_{n}})|^{p^{*}} d\mu(\psi) \right)  ^{1/p^{*}}\\
&  \leq C \left\Vert \left(  x_{j}^{(1)} \right)  _{j=1}^{m} \right\Vert _{p}
\cdots\left\Vert \left(  x_{j}^{(n)} \right)  _{j=1}^{m} \right\Vert _{p}
\left(  \sup_{\psi \in B_{F^{''}}} \sum_{j_{1},...,j_{n}=1}^{m} |\psi(\varphi
_{j_{1},...,j_{n}})|^{p^{*}} \right)  ^{1/p^{*}}\\
&  = C \left\Vert \left(  x_{j}^{(1)} \right)  _{j=1}^{m} \right\Vert _{p}
\cdots\left\Vert \left(  x_{j}^{(n)} \right)  _{j=1}^{m} \right\Vert _{p}
\left\Vert \left(  \varphi_{j_{1},...,j_{n}}\right)  _{j_{1},...,j_{n}=1}^{m}
\right\Vert _{w,p^{*}} \ ,
\end{align*}
and thus $T \in\mathcal{L}_{mCoh,p}(E_{1},...,E_{n};F)$.
\end{proof}

\vspace{0,15cm}
An immediate consequence of Theorem 2.2.2 in Cohen's article
(\cite{cohen73}, page 184) is that the Dvoretzky-Rogers theorem is valid for Cohen
strongly $p$-summing linear operators:

\vspace{0,15cm}
\begin{theorem} If $E$ is a Banach space
then $id_{E}\,: E \rightarrow E$ is Cohen strongly $p$-summing if and only if
$\mathrm{dim}\,E < \infty$.
\end{theorem}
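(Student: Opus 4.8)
The plan is to first translate the statement into an equality of sequence spaces and then treat the two implications separately. By definition $id_E \in \mathcal{D}_p(E;E)$ exactly when $(x_i)_{i=1}^\infty = (id_E(x_i))_{i=1}^\infty \in l_p\langle E\rangle$ for every $(x_i)_{i=1}^\infty \in l_p(E)$, i.e. when $l_p(E) \subseteq l_p\langle E\rangle$. Since Proposition \ref{inclusoes} always gives the reverse inclusion $l_p\langle E\rangle \subseteq l_p(E)$, the map $id_E$ is Cohen strongly $p$-summing if and only if $l_p(E) = l_p\langle E\rangle$. Thus the theorem reduces to proving that this equality holds precisely when $\dim E < \infty$ (we take $1 < p < \infty$; for $p=1$ one has $\mathcal{D}_1 = \mathcal{L}$ and the statement degenerates).

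For the implication $\dim E < \infty \Rightarrow$ equality, I would use that in finite dimensions weak and norm $p^{*}$-summability coincide, so $l_{p^{*}}^{w}(E') = l_{p^{*}}(E')$. Then, given $(x_i) \in l_p(E)$ and any $(\varphi_i) \in l_{p^{*}}^{w}(E') = l_{p^{*}}(E')$, Hölder's inequality yields $\sum_i |\varphi_i(x_i)| \le \sum_i \|\varphi_i\|\,\|x_i\| \le \left(\sum_i\|x_i\|^p\right)^{1/p}\left(\sum_i\|\varphi_i\|^{p^{*}}\right)^{1/p^{*}} < \infty$, so $(x_i) \in l_p\langle E\rangle$ and $l_p(E) \subseteq l_p\langle E\rangle$.

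The substantive half is $\dim E = \infty \Rightarrow l_p\langle E\rangle \subsetneq l_p(E)$, which I would obtain by exhibiting a failure of the inequality in Proposition \ref{prop3}$(iii)$ for $T = id_E$. The key identity is that, for finite families $\varphi_1,\dots,\varphi_n \in E'$, the weak norm equals an operator norm, $\|(\varphi_i)_{i=1}^n\|_{w,p^{*}} = \|v\colon l_p^n \to E'\|$ with $v(a) = \sum_i a_i\varphi_i$, and likewise $\|(\varphi_i)_{i=1}^n\|_{w,2} = \|v\colon l_2^n \to E'\|$. I would apply the Dvoretzky--Rogers Lemma to the infinite-dimensional space $E'$ to obtain, for each $n$, norm-one functionals $\varphi_1,\dots,\varphi_n \in E'$ with an upper $l_2$-estimate $\|\sum_i a_i\varphi_i\| \le C_0 (\sum_i |a_i|^2)^{1/2}$, that is, $\|(\varphi_i)_{i=1}^n\|_{w,2} \le C_0$. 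Factoring $v$ through the formal inclusion $l_p^n \hookrightarrow l_2^n$ then bounds $\|(\varphi_i)_{i=1}^n\|_{w,p^{*}}$ by $2C_0$ when $p \le 2$ and by $2C_0\,n^{1/2-1/p}$ when $p \ge 2$. Choosing unit vectors $x_i \in E$ with $\varphi_i(x_i) \ge 1/2$ makes the left-hand side at least $n/2$ while $\left(\sum_{i=1}^n \|x_i\|^p\right)^{1/p} = n^{1/p}$; substituting the weak-norm bounds, any admissible constant $C$ would have to dominate $n^{1/p^{*}}/(4C_0)$ (case $p \le 2$) or $n^{1/2}/(4C_0)$ (case $p \ge 2$), both tending to $\infty$. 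This contradiction shows $id_E \notin \mathcal{D}_p(E;E)$.

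The main obstacle is precisely this last step: controlling $\|(\varphi_i)_{i=1}^n\|_{w,p^{*}}$, a supremum over the whole ball of $E''$ (equivalently of $E$), so one cannot merely extend arbitrary functionals by Hahn--Banach and hope the joint weak norm stays small. The Dvoretzky--Rogers Lemma is exactly the device furnishing unit vectors whose \emph{joint} weak $l_2$-norm remains bounded in the infinite-dimensional dual. I note that this entire infinite-dimensional argument is packaged in Cohen's Theorem 2.2.2: via Cohen's duality, $id_E$ is Cohen strongly $p$-summing if and only if its adjoint $id_{E'}$ is absolutely $p^{*}$-summing, and the classical Dvoretzky--Rogers theorem says the latter holds iff $\dim E' < \infty$, i.e. iff $\dim E < \infty$. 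Citing this gives the result in one line, but the construction above makes the mechanism explicit.
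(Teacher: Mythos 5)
Your proof is correct, but it is a genuinely different argument from the paper's. The paper proves nothing directly: it obtains the theorem as an immediate consequence of Theorem 2.2.2 of Cohen's article, which identifies $\mathcal{D}_{p}(E;F)$ with the conjugates of absolutely $p^{\ast}$-summing operators, so that $id_{E}\in\mathcal{D}_{p}$ if and only if $id_{E^{\prime}}$ is absolutely $p^{\ast}$-summing, and the classical Dvoretzky--Rogers theorem finishes the job --- exactly the one-line route you sketch in your closing remark. Your main argument instead works entirely inside the definition: after the (correct) reduction of the statement to the equality $l_{p}(E)=l_{p}\langle E\rangle$ via Proposition~\ref{inclusoes}, you verify the finite-dimensional half by H\"older together with $l_{p^{\ast}}^{w}(E^{\prime})=l_{p^{\ast}}(E^{\prime})$, and you refute the inequality of Proposition~\ref{prop3}$(iii)$ in infinite dimensions by applying the Dvoretzky--Rogers \emph{lemma} in $E^{\prime}$ to produce norm-one functionals with $\|(\varphi_{i})_{i=1}^{n}\|_{w,2}\leq C_{0}$, converting weak norms to operator norms of $v\colon l_{p}^{n}\to E^{\prime}$ and factoring through $l_{2}^{n}$; the resulting lower bounds $n^{1/p^{\ast}}/(4C_{0})$ (for $p\leq2$) and $n^{1/2}/(4C_{0})$ (for $p\geq2$) on any admissible constant are correct, since $p>1$ makes $p^{\ast}<\infty$. (Two harmless quibbles: for $p\leq2$ the inclusion $l_{p}^{n}\hookrightarrow l_{2}^{n}$ has norm $1$, so your $2C_{0}$ can be $C_{0}$; and your restriction to $1<p<\infty$ matches the intended range, the case $p=1$ being degenerate as you note.) What each approach buys: the paper's citation is shorter and places the result where it conceptually belongs, in Cohen's duality $\mathcal{D}_{p}=(\Pi_{p^{\ast}})^{\mathrm{dual}}$; your construction is self-contained, avoids the duality theorem entirely, and is quantitative, exhibiting the explicit growth rate of the best constant $d_{p}$ on $n$-tuples, which the citation route does not provide.
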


\vspace{0,15cm}
Moreover, by invoking the property (CP1) in Definition \ref{defcomp}, which is
satisfied by the class $\mathcal{L}_{mCoh,p}$, we conclude that if
$u:\,E\rightarrow F$ is not Cohen strongly $p$-summing, then the operator
\[
\psi\,:E\times\cdots\times E\rightarrow F,\ \ \mathrm{defined}\ \mathrm{by}%
\ \ \psi(x_{1},...,x_{n})=\varphi(x_{1})\ldots\varphi(x_{n-1})u(x_{n})\ ,
\]
where $0\neq\varphi\in E^{^{\prime}}$, does not belong to $\mathcal{L}%
_{mCoh,p}$. This shows that the class of multiple Cohen strongly $p$-summing
multilinear operators, though it contains the class $\mathcal{L}_{Coh,p},$ is
not so large.

\vspace{0,15cm}

\section{Linear, polynomial and multilinear ideals of Cohen strongly summing operators}

\label{ic}

Now we present some concepts and results on the theory of operators
ideals aiming to study the ideals of Cohen strongly $p$-summing
operators/polynomials/multilinear operators. The proofs are omitted; the
book \cite{pietsch80} is an excellent reference.

\vspace{0,15cm}

\begin{definition}
An operator ideal $\mathcal{I}$ is a subclass of the class $\mathcal{L}$ of
all continuous linear operators between Banach spaces such that for any Banach
spaces $E$ and $F,$ the components $\mathcal{I}\left(  E;F\right)
=\mathcal{L}\left(  E;F\right)  \cap\mathcal{I}$ satisfy:

$(i)$ $\mathcal{I}\left(  E,F\right)  $ is a linear subspace of
$\mathcal{L}\left(  E;F\right)  $ that contains the finite rank operators;

$(ii)$ The ideal property: if $u\in\mathcal{L}\left(  E;F\right)  $,
$v\in\mathcal{I}\left(  F,G\right)  $ and $t\in\mathcal{L}\left(  G,H\right)
,$ then $tvu\in\mathcal{I}\left(  E;H\right)  .$
\end{definition}

\vspace{0,15cm}

\begin{definition}
A normed ($p$-normed) operator ideal $\left(  \mathcal{I},\left\Vert
\cdot\right\Vert _{\mathcal{I}}\right)  $ is an operator ideal $\mathcal{I}$
equipped with a mapping $\left\Vert \cdot\right\Vert _{\mathcal{I}%
}:\mathcal{I}\rightarrow\left[  0,\infty\right)  $ such that:

$(i)$ $\left\Vert \cdot\right\Vert _{\mathcal{I}}$ constrained to
$\mathcal{I}\left(  E;F\right)  $ is a norm ($p$-norm) for any Banach spaces
$E$ and $F$;

$(ii)$ $\left\Vert id_{\mathbb{K}}\right\Vert _{\mathcal{I}}=1,$ with
$id_{\mathbb{K}}:\mathbb{K\rightarrow K}$ given by $id_{\mathbb{K}}\left(
x\right)  =x;$

$(iii)$ If $u\in\mathcal{L}\left(  E,F\right)  $ ,$v\in\mathcal{I}\left(
F;G\right)  $ and $t\in\mathcal{L}\left(  G;H\right)  $ , then $\left\Vert
tvu\right\Vert _{\mathcal{I}}\leq\left\Vert t\right\Vert \left\Vert
v\right\Vert _{\mathcal{I}}\left\Vert u\right\Vert .$
\end{definition}

\vspace{0,15cm} When the ideal components $\mathcal{I}\left(  E;F\right)  $ are
complete with respect to $\left\Vert \cdot\right\Vert _{\mathcal{I}},$ we say
that $\mathcal{I}$ is a complete ideal (or Banach ideal). In the case of
$p$-normed ideals, we say that $\mathcal{I}$ is a quasi-Banach ideal. The
inequality $\left\Vert t\right\Vert \leq\left\Vert t\right\Vert _{\mathcal{I}%
}$ for any $t\in\mathcal{I}$, valid for any ideal, is a very useful property
in the statements of completeness.

\vspace{0,15cm}

\begin{definition}
A multilinear operator ideal $\mathcal{M}$ is a subclass of the all class of
continuous multilinear operators between Banach such that for any
$n\in\mathbb{N}$ and Banach spaces $E_{1},...,E_{n}$ and $F$, the components
$\mathcal{M}\left(  E_{1},...,E_{n};F\right)  =\mathcal{L}\left(
E_{1},...,E_{n};F\right)  \cap\mathcal{M}$ satisfy:

$\left(  i\right)  $ $\mathcal{M}\left(  E_{1},...,E_{n};F\right)  $ is a
subspace of $\mathcal{L}\left(  E_{1},...,E_{n};F\right)  $ which contains the
$n$-linear finite type operators;

$\left(  ii\right)  $ The ideal property: if $A\in\mathcal{M}\left(
E_{1},...,E_{n};F\right)  ,$ $u_{j}\in\mathcal{L}\left(  G_{j},E_{j}\right)  $
for $j=1,...,n$ and $t\in\mathcal{L}\left(  F;H\right)  ,$ then $tA\left(
u_{1},...,u_{n}\right)  \in\mathcal{M}\left(  G_{1},...,G_{n};H\right)  .$
\end{definition}

\vspace{0,15cm} For each fixed $n$, $\mathcal{M}_{n}=\bigcup_{E_{1}%
,...,E_{n},F\text{ Banach}}\mathcal{M}\left(  E_{1},...,E_{n};F\right)  $ is
called $n$-linear multi-ideal.

\vspace{0,15cm}

\begin{definition}
A normed (or quasi-normed) ideal of multilinear mappings
\newline$\left(  \mathcal{M},\left\Vert \cdot\right\Vert _{\mathcal{M}%
}\right)  $ is an multilinear ideal provided with a function $\left\Vert
\cdot\right\Vert _{\mathcal{M}}:\mathcal{M}\longrightarrow\left[
0,\infty\right)  ,$ such that:

$(i)$ $\left\Vert \cdot\right\Vert _{\mathcal{M}}$ constrained to
$\mathcal{M}\left(  E_{1},...,E_{n};F\right)  $ is a norm (or quasi-norm),
with constant not depending on space, possibly depending only on $n$, for any
Banach spaces $E_{1},...,E_{n}$, $F$ and all $n\in\mathbb{N}$;

$(ii)$ $\left\Vert id_{\mathbb{K}^{n}}\right\Vert _{\mathcal{M}}=1$, where
$id_{\mathbb{K}^{n}}:\mathbb{K}^{n}\longrightarrow\mathbb{K}$ is given by
$id_{\mathbb{K}^{n}}\left(  x_{1},...,x_{n}\right)  =x_{1}\cdots x_{n}$ for
all $n\in\mathbb{N}$;

$(iii)$ If $M\in\mathcal{M}\left(  E_{1},...,E_{n};F\right)  $, $u_{j}%
\in\mathcal{L}\left(  G_{j},E_{j}\right)  $ for $j=1,...,n$ and $t\in
\mathcal{L}\left(  F;H\right)  ,$ then
\[
\left\Vert tM\left(  u_{1},...,u_{n}\right)  \right\Vert _{\mathcal{M}}
\leq\left\Vert t\right\Vert \left\Vert M\right\Vert _{\mathcal{M}}\left\Vert
u_{1}\right\Vert \cdots\left\Vert u_{n}\right\Vert .
\]
\end{definition}

\vspace{0,15cm} If $n$ is a fixed positive integer, under the above
conditions, we say that $\mathcal{M}_{n}$ is an normed (quasi-normed) ideal of
$n$-linear mappings. When the components $\mathcal{M}\left(  E_{1}%
,...,E_{n};F\right)  $ are complete with respect to $\left\Vert \cdot
\right\Vert _{\mathcal{M}}$, we say that $\mathcal{M}$ is a complete
multilinear ideal. The same is said about $\mathcal{M}_{n}.$

\vspace{0,15cm} Norms on ideals of multilinear mappings behave similar to the
case of ideals of linear mappings, so that $\left\Vert M\right\Vert
\leq\left\Vert M\right\Vert _{\mathcal{M}}$ for any $M$ in $\mathcal{M}$.

\vspace{0,15cm}

\begin{definition}
An ideal of homogeneous polynomials, or simply an ideal of polynomials is a
subclass $\mathcal{Q}$ of the class of all continuous homogeneous polynomials
between Banach spaces such that for all $n\in\mathbb{N}$ an any Banach spaces
$E$ and $F$, the components $\mathcal{Q}\left(  ^{n}E;F\right)  =\mathcal{P}%
\left(  ^{n}E;F\right)  \cap\mathcal{Q}$ satisfy:

$(i)$ $\mathcal{Q}\left(  ^{n}E;F\right)  $ is a vector subspace of
$\mathcal{P}\left(  ^{n}E;F\right)  $ containing the $n$-homogeneous finite
type polynomials;

$(ii)$ The ideal property: if $u\in\mathcal{L}\left(  G;E\right)  $,
$P\in\mathcal{Q}\left(  ^{n}E;F\right)  $ and $t\in\mathcal{L}\left(
F;H\right)  ,$ then $tPu\in\mathcal{Q}\left(  ^{n}G;H\right)  .$
\end{definition}

If $n\in\mathbb{N}$ is fixed, $\mathcal{Q}_{n}:=\bigcup_{E,F\text{ Banach}%
}\mathcal{Q}\left(  ^{n}E;F\right)  $ is called the ideal of $n$-homogeneous polynomials.

\vspace{0,15cm}

\begin{definition}
A normed (or quasi-normed) ideal of polynomials
$\left(  \mathcal{Q},\left\Vert \cdot\right\Vert _{\mathcal{Q}}\right)  $ is
an ideal of polynomials if there is a function $\left\Vert \cdot\right\Vert
_{\mathcal{Q}}:\mathcal{Q}\longrightarrow\left[  0,\infty\right)  ,$ such that:

$(i)$ $\left\Vert \cdot\right\Vert _{\mathcal{Q}}$ constrained to
$\mathcal{Q}\left(  ^{n}E;F\right)  $ is a norm (or quasi-norm) with constant
not depending on space, possibly depending only on $n$, for any Banach spaces
$E$ and $F$ and all $n\in\mathbb{N}$;

$(ii)$ $\left\Vert id_{\mathbb{K}}\right\Vert _{\mathcal{Q}}=1$, where
$id_{\mathbb{K}}:\mathbb{K}\longrightarrow\mathbb{K}$ is given by
$id_{\mathbb{K}}\left(  x\right)  =x^{n}$;

$(iii)$ If $u\in\mathcal{L}\left(  G,E\right)  ,$ $P\in\mathcal{Q}\left(
^{n}E;F\right)  $, and $t\in\mathcal{L}\left(  F;H\right)  ,$ then $\left\Vert
tPu\right\Vert _{\mathcal{Q}}\leq\left\Vert t\right\Vert \left\Vert
P\right\Vert _{\mathcal{Q}}\left\Vert u\right\Vert ^{n}.$
\end{definition}

If the components $\mathcal{Q}\left(  ^{n}E;F\right)  $ are complete with
respect to $\left\Vert \cdot\right\Vert _{Q},$ we say that $\mathcal{Q}$ is a
Banach ideal (or quasi-Banach ideal). Similarly one proceeds to $\mathcal{Q}%
_{n}.$

\vspace{0,15cm} The definition and results below show that it is always
possible to obtain a (complete) ideal of polynomials from the multi-ideals.

\vspace{0,15cm}

\begin{definition}
Let $\mathcal{M}$ be a quasi-normed ideal of multilinear mappings. The class
\[
\mathcal{P}_{\mathcal{M}}=\left\{  P\in\mathcal{P}^{n};\check{P}\in
\mathcal{M},n\in\mathbb{N}\right\}  \ ,
\]
with $\left\Vert P\right\Vert _{\mathcal{P}_{\mathcal{M}}}:=\left\Vert
\check{P}\right\Vert _{\mathcal{M}},$ is called ideal of polynomials generated
by the ideal $\mathcal{M}$.
\end{definition}

\vspace{0,15cm}
\begin{proposition}
Let $\mathcal{M}$ be a complete ideal of multilinear
mappings. Then $\mathcal{P}_{\mathcal{M}}$ is a Banach ideal of polynomials.
\end{proposition}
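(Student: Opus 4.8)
The plan is to transport the entire ideal structure of $\mathcal{M}$ to $\mathcal{P}_{\mathcal{M}}$ through the linear bijection $P\mapsto\check P$ between $\mathcal{P}(^{n}E;F)$ and the space $\mathcal{L}_{s}(^{n}E;F)$ of symmetric $n$-linear maps. Two elementary facts drive everything: first, $P\mapsto\check P$ is linear and, by the polarization formula, a topological isomorphism onto $\mathcal{L}_{s}(^{n}E;F)$ for the usual norms; second, it is compatible with composition, namely for $Q:=t\circ P\circ u$ with $u\in\mathcal{L}(G;E)$ and $t\in\mathcal{L}(F;H)$ one has $\check Q=t\circ\check P\circ(u,\dots,u)$. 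I would first check the ideal axioms. Linearity of $P\mapsto\check P$ together with the fact that $\mathcal{M}(^{n}E;F)$ is a subspace gives that $\mathcal{P}_{\mathcal{M}}(^{n}E;F)$ is a subspace; that it contains the $n$-homogeneous finite type polynomials follows because the symmetrization of such a polynomial is a symmetric finite type $n$-linear map, which lies in $\mathcal{M}$ by the defining axiom of a multilinear ideal. For the ideal property, $P\in\mathcal{P}_{\mathcal{M}}$ means $\check P\in\mathcal{M}$, so $\check Q=t\,\check P(u,\dots,u)\in\mathcal{M}$ by the ideal property of $\mathcal{M}$, i.e. $Q=tPu\in\mathcal{P}_{\mathcal{M}}$.

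Next I would verify the three norm axioms for $\|P\|_{\mathcal{P}_{\mathcal{M}}}:=\|\check P\|_{\mathcal{M}}$. That this is a norm on each component is immediate from $P\mapsto\check P$ being a linear bijection onto a subspace of $\mathcal{M}(^{n}E;F)$ on which $\|\cdot\|_{\mathcal{M}}$ is a norm (definiteness uses that $\check P=0$ forces $P=0$). The normalization $\|id_{\mathbb{K}}\|_{\mathcal{P}_{\mathcal{M}}}=1$ holds because the symmetrization of $x\mapsto x^{n}$ is exactly $id_{\mathbb{K}^{n}}(x_{1},\dots,x_{n})=x_{1}\cdots x_{n}$, so $\|id_{\mathbb{K}}\|_{\mathcal{P}_{\mathcal{M}}}=\|id_{\mathbb{K}^{n}}\|_{\mathcal{M}}=1$ by the corresponding axiom for $\mathcal{M}$. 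Finally, the estimate $\|tPu\|_{\mathcal{P}_{\mathcal{M}}}\le\|t\|\,\|P\|_{\mathcal{P}_{\mathcal{M}}}\,\|u\|^{n}$ is just the multilinear ideal inequality applied to $\check Q=t\,\check P(u,\dots,u)$.

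For completeness I would take a Cauchy sequence $(P_{k})$ in $\mathcal{P}_{\mathcal{M}}(^{n}E;F)$; by definition of the norm, $(\check P_{k})$ is Cauchy in $\mathcal{M}(^{n}E;F)$, which is complete, so $\check P_{k}\to A$ in $\|\cdot\|_{\mathcal{M}}$ for some $A\in\mathcal{M}(^{n}E;F)$. The main (and essentially only delicate) point is to see that $A$ is symmetric, so that $A=\check P$ for the well-defined polynomial $P(x):=A(x,\dots,x)$. This uses the inequality $\|\cdot\|\le\|\cdot\|_{\mathcal{M}}$ recalled in the text: convergence in $\|\cdot\|_{\mathcal{M}}$ forces convergence in the usual sup-norm, and symmetry is a closed condition for the sup-norm, so the sup-norm limit $A$ of the symmetric maps $\check P_{k}$ is again symmetric. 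Then $P\in\mathcal{P}_{\mathcal{M}}$ since $\check P=A\in\mathcal{M}$, and $\|P_{k}-P\|_{\mathcal{P}_{\mathcal{M}}}=\|\check P_{k}-A\|_{\mathcal{M}}\to 0$, giving completeness of each component. I expect this closedness-of-the-symmetric-part argument to be the step requiring care; everything else is a routine dictionary between $P$ and $\check P$.
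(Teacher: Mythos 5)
Your proof is correct. Note that the paper itself gives no argument for this proposition: it is stated in Section \ref{ic} among background results whose ``proofs are omitted,'' with \cite{pietsch80} cited as the reference, so there is no in-paper proof to compare against --- but your argument is precisely the standard one this omission presupposes. All the steps check out: the dictionary $P\mapsto\check P$ handles the subspace, finite-type, ideal-property and norm axioms (including $(tPu)^{\vee}=t\,\check P(u,\dots,u)$ and the identification of $(x\mapsto x^{n})^{\vee}$ with $id_{\mathbb{K}^{n}}$, which gives $\|id_{\mathbb{K}}\|_{\mathcal{P}_{\mathcal{M}}}=1$), and you correctly isolate the one nontrivial point in completeness: the $\|\cdot\|_{\mathcal{M}}$-limit $A$ of the $\check P_{k}$ must be shown symmetric, which your appeal to the inequality $\|\cdot\|\leq\|\cdot\|_{\mathcal{M}}$ (recalled in the paper just before Remark \ref{des1}) settles, since symmetry is closed under convergence in the uniform norm and the unique symmetric map with diagonal $P(x)=A(x,\dots,x)$ is then $\check P=A$. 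No gaps.
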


The following remark is useful for next results:
\vspace{0,15cm}
\begin{remark}
\label{des1} If $T \in\mathcal{L}(E;F)$ and $\varphi_{i}\in F^{^{\prime}%
},\ i=1,...,m$ , then we have
\begin{align*}
||(\varphi_{i} \circ T)_{i=1}^{m}||_{w,p^{*}}  &  = \sup_{y \in B_{E}}
||(\varphi_{i} (T(y)))_{i=1}^{m}||_{p^{*}}\\
&  = ||T|| \sup_{y \in B_{E}} \left\Vert \left(  \varphi_{i} \left(
\frac{T(y)}{||T||}\right)  \right)  _{i=1}^{m}\right\Vert _{p^{*}}\\
&  \leq||T|| \sup_{h \in B_{F}} ||(\varphi_{i} (h))_{i=1}^{m}||_{p^{*}}\\
&  = ||T||\, ||(\varphi_{i})_{i=1}^{m}||_{w,p^{*}}\ .
\end{align*}
\end{remark}

\vspace{0,15cm} Let us denote by $\mathcal{D}_{p}$ the class
of all linear operators between Banach spaces that are Cohen strongly
$p$-summing. We will show that $\left(  \mathcal{D}_{p}, d_{p}\right)  $ is a
complete normed ideal of linear operators.

\vspace{0,15cm}
\begin{proposition}
If $1 < p < \infty$, then $\left(  \mathcal{D}_p,
d_p \right)  $ is a complete normed ideal of linear operators.
\end{proposition}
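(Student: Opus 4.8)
The plan is to verify each axiom in the definition of a complete normed operator ideal for the pair $\left(\mathcal{D}_p, d_p\right)$, drawing on the characterization in Proposition \ref{prop3}$(iii)$ as the working tool throughout. First I would check the algebraic ideal axioms. That $\mathcal{D}_p(E;F)$ is a subspace is already noted in the text; to see it contains the finite rank operators it suffices to handle a rank-one operator $x \mapsto \phi(x) y$ and observe that the defining inequality (\ref{def3}) reduces to a scalar estimate controllable by H\"older's inequality. For the ideal property, suppose $u \in \mathcal{L}(E;F)$, $v \in \mathcal{D}_p(F;G)$ and $t \in \mathcal{L}(G;H)$; I would feed the composite $tvu$ into inequality (\ref{def3}), using that $\varphi_i \circ t \in G^{^{\prime}}$ and applying Remark \ref{des1} to bound $||(\varphi_i \circ t)_{i=1}^{m}||_{w,p^{*}} \leq ||t||\,||(\varphi_i)_{i=1}^{m}||_{w,p^{*}}$, while the factor $||(u(x_i))_{i=1}^{m}||_p \leq ||u||\,||(x_i)_{i=1}^{m}||_p$ comes from $\widehat{u}^{s}$ being bounded. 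Collecting constants yields $d_p(tvu) \leq ||t||\, d_p(v)\, ||u||$, which simultaneously establishes the ideal property and the norm inequality $(iii)$ in the definition of a normed ideal.

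Next I would confirm the remaining normed-ideal axioms: that $d_p(\cdot)$ restricts to a genuine norm on each component (already asserted after Proposition \ref{prop3}, with $d_p$ defined as the least constant in (\ref{def3})), and the normalization $d_p(id_{\mathbb{K}}) = 1$. For the latter I would compute directly from (\ref{def3}) with $E = F = \mathbb{K}$: the left side becomes $\sum_i |\varphi_i x_i|$ and the sharp constant matching it against $||(x_i)||_p\,||(\varphi_i)||_{w,p^{*}}$ is exactly $1$, since on the scalar field $||(\varphi_i)_{i=1}^{m}||_{w,p^{*}} = ||(\varphi_i)_{i=1}^{m}||_{p^{*}}$ and H\"older is sharp.

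The substantive step is completeness. The standard strategy is to show that $\left(\mathcal{D}_p(E;F), d_p\right)$ is a Banach space for each pair $E,F$. I would take a Cauchy sequence $(T_k)$ in $d_p$; since $||\cdot|| \leq d_p(\cdot)$ (the general inequality recalled in the text for any normed ideal), $(T_k)$ is also Cauchy in the operator norm, hence converges to some $T \in \mathcal{L}(E;F)$. It then remains to show $T \in \mathcal{D}_p(E;F)$ and $d_p(T_k - T) \to 0$. For this I would fix $m$, fix $x_1,\dots,x_m \in E$ and $\varphi_1,\dots,\varphi_m \in F^{^{\prime}}$, and apply inequality (\ref{def3}) to the differences $T_k - T_l$; passing to the limit $l \to \infty$ inside the finite sum (using that $\varphi_i(T_l(x_i)) \to \varphi_i(T(x_i))$ by norm convergence) gives
\[
\sum_{i=1}^{m} |\varphi_i((T_k - T)(x_i))| \leq \varepsilon\, ||(x_i)_{i=1}^{m}||_p\, ||(\varphi_i)_{i=1}^{m}||_{w,p^{*}}
\]
for all $k$ beyond the Cauchy index. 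Taking the supremum over the admissible $x_i, \varphi_i$ of norm-data bounded by one shows $d_p(T_k - T) \leq \varepsilon$, which gives both $T_k - T \in \mathcal{D}_p$ (so $T = T_k - (T_k - T) \in \mathcal{D}_p$) and the desired convergence.

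The main obstacle is the limit-interchange in the completeness argument: one must be careful that the estimate is taken over finite sums (where pointwise convergence $\varphi_i(T_l(x_i)) \to \varphi_i(T(x_i))$ is immediate) before taking the supremum defining $d_p$, rather than attempting to pass the limit through an infinite series or through the supremum prematurely. Working with the finite-$m$ characterization (\ref{def3}) throughout circumvents any uniform-integrability concern, since each inequality involves only finitely many terms and the constant $\varepsilon$ is uniform in $m$.
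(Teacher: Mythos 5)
Your proposal is correct and takes essentially the same route as the paper: the ideal property is proved exactly as in the text, via Remark \ref{des1} together with the boundedness of $\widehat{u}^{s}$, yielding $d_{p}(tvu)\leq \left\Vert t\right\Vert d_{p}(v)\left\Vert u\right\Vert$. The remaining points you elaborate (rank-one operators via H\"older, $d_{p}(id_{\mathbb{K}})=1$, and completeness through $\left\Vert \cdot\right\Vert \leq d_{p}(\cdot)$ combined with the finite-sum characterization (\ref{def3}) applied to $T_{k}-T_{l}$ before letting $l\to\infty$) are precisely the ``straightforward calculations'' the paper leaves unwritten, and you carry them out correctly.
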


\begin{proof}
We know that the components $\mathcal{D}_{p}(E;F)$ are normed spaces (with
$d_{p}(\cdot)$) for any Banach spaces $E$ and $F$. Straightforward calculations show that $d_p(id_\mathbb{K})=1$ and every component $\mathcal{D}_{p}(E;F)$ is complete with norm $d_p(\cdot)$ and contains the finite rank linear operators.

Ideal property: Let $A_{1} \in\mathcal{L}(E_{0};E)$, $T \in\mathcal{D}%
_{p}(E;F)$ and $A_{2} \in\mathcal{L}(F;F_{0})$. For all $m \in\mathbb{N}$,
using Remark \ref{des1}, we have
\begin{align}
\label{des2}\sum_{i=1}^{m} |\varphi_{i}((A_{2} \circ T \circ A_{1}) (x_{i}))|
&  = \sum_{i=1}^{m} |(\varphi_{i} \circ A_{2})(T(A_{1}(x_{i}))|\nonumber\\
&  \leq d_{p}(T) ||(A_{1}x_{i})_{i=1}^{m}||_{p}\,||(\varphi_{i} \circ
A_{2})_{i=1}^{m}||_{w,p^{*}}\nonumber\\
&  \leq d_{p}(T) ||A_{1}||\,||(x_{i})_{i=1}^{m}||_{p}\,||A_{2}||\,
||(\varphi_{i})_{i=1}^{m}||_{w,p^{*}}\nonumber\\
&  = ||A_{2}||\, d_{p}(T)\, ||A_{1}||\, ||(x_{i})_{i=1}^{m}||_{p}\,
||(\varphi_{i})_{i=1}^{m}||_{w,p^{*}}\\
&  = C\, ||(x_{i})_{i=1}^{m}||_{p}\,||(\varphi_{i})_{i=1}^{m}||_{w,p^{*}%
}\ \nonumber
\end{align}
and thus $A_{2} \circ T \circ A_{1} \in\mathcal{D}_{p}(E_{0};F_{0})$. Note
also that, for (\ref{des2}), we get
\[
d_{p}(A_{2} \circ T \circ A_{1}) \leq||A_{2}||\, d_{p}(T)\, ||A_{1}||\ .
\]
\end{proof}

\vspace{0,15cm}
Let us denote by $\mathcal{L}_{Coh,p}$ and by $\mathcal{P}_{Coh,p}$ the classes of all multilinear operators and polynomials between Banach spaces that are Cohen strongly $p$-summing. The proof of next Proposition is obtained in a similar way to the previous and will be omitted:

\vspace{0,15cm}
\begin{proposition}
If $1 < p < \infty$ and $n \in \mathbb{N}$, then $\left( \mathcal{L}_{Coh,p}^n , ||\cdot||_{Coh,p}\right)$ and $\left( \mathcal{P}_{Coh,p}^n , ||\cdot||_{Coh,p}\right)$ are a complete normed ideals of $n$-linear operators and $n$-homogeneous polynomials.
\end{proposition}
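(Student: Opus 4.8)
The plan is to mirror the argument already carried out for the linear ideal $\left(\mathcal{D}_p,d_p\right)$, using the finite-form characterization $(iii)$ of Proposition \ref{prop4} together with Remark \ref{des1}, and then to deduce the polynomial statement from the multilinear one by polarization. For the multilinear class $\mathcal{L}_{Coh,p}^n$, Proposition \ref{prop4} already furnishes that each component is a normed space under $\|\cdot\|_{Coh,p}$; the normalization $\|id_{\mathbb{K}^n}\|_{Coh,p}=1$ follows from a direct computation with (\ref{def1}), and the $n$-linear finite type operators are readily checked to satisfy (\ref{def1}). The core is the ideal property. Given $M\in\mathcal{L}_{Coh,p}(E_1,\ldots,E_n;F)$, $u_j\in\mathcal{L}(G_j;E_j)$ and $t\in\mathcal{L}(F;H)$, I would write, for $y_i^{(j)}\in G_j$ and $\varphi_i\in H'$,
\[
\varphi_i\left((t M(u_1,\ldots,u_n))(y_i^{(1)},\ldots,y_i^{(n)})\right)=(\varphi_i\circ t)\left(M(u_1 y_i^{(1)},\ldots,u_n y_i^{(n)})\right),
\]
apply (\ref{def1}) to $M$ with the vectors $u_j y_i^{(j)}$ and the functionals $\varphi_i\circ t$, and then estimate each factor by $\|u_j y_i^{(j)}\|\leq\|u_j\|\,\|y_i^{(j)}\|$ and the weak-norm factor by $\|(\varphi_i\circ t)_{i=1}^m\|_{w,p^*}\leq\|t\|\,\|(\varphi_i)_{i=1}^m\|_{w,p^*}$ via Remark \ref{des1} (with $T=t$). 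This yields $tM(u_1,\ldots,u_n)\in\mathcal{L}_{Coh,p}$ and the sharp estimate $\|tM(u_1,\ldots,u_n)\|_{Coh,p}\leq\|t\|\,\|M\|_{Coh,p}\,\|u_1\|\cdots\|u_n\|$.

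For completeness I would take a sequence $(T_k)$ that is Cauchy in $\left(\mathcal{L}_{Coh,p}(E_1,\ldots,E_n;F),\|\cdot\|_{Coh,p}\right)$. Since $\|T\|\leq\|T\|_{Coh,p}$, the sequence is Cauchy in $\mathcal{L}(E_1,\ldots,E_n;F)$ and hence converges in operator norm, in particular pointwise, to some $T$. Fixing $\varepsilon>0$ and $N$ with $\|T_k-T_l\|_{Coh,p}<\varepsilon$ for $k,l\geq N$, I would write (\ref{def1}) for $T_k-T_l$ applied to an arbitrary finite system of vectors and functionals and let $l\to\infty$; pointwise convergence suffices to pass to the limit inside a fixed finite sum, giving the same inequality for $T_k-T$ with constant $\varepsilon$. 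As this holds for every finite system, $T_k-T\in\mathcal{L}_{Coh,p}$ with $\|T_k-T\|_{Coh,p}\leq\varepsilon$; hence $T=T_k-(T_k-T)\in\mathcal{L}_{Coh,p}$ and $T_k\to T$ in $\|\cdot\|_{Coh,p}$.

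For the polynomial class I would invoke the fact recorded after the definition of $\mathcal{P}_{Coh,p}$, namely that $P$ is Cohen strongly $p$-summing if and only if $\check{P}$ is, so that $\mathcal{P}_{Coh,p}^n$ is precisely the polynomial ideal $\mathcal{P}_{\mathcal{L}_{Coh,p}}$ generated by the multi-ideal just shown to be complete; the preceding proposition on $\mathcal{P}_{\mathcal{M}}$ then yields that it is a Banach ideal. The one point demanding care is that the intrinsic polynomial norm $\|\cdot\|_{Coh,p}$ defined by (\ref{polcoh}) is a priori only equivalent to $\|\check{P}\|_{Coh,p}$: the substitution $x_i^{(1)}=\cdots=x_i^{(n)}=x_i$ in (\ref{def1}) gives $\|P\|_{Coh,p}\leq\|\check{P}\|_{Coh,p}$, while polarization supplies a reverse bound with the usual constant $n^n/n!$. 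Since equivalent norms share completeness and the ideal inequalities rescale harmlessly, $\left(\mathcal{P}_{Coh,p}^n,\|\cdot\|_{Coh,p}\right)$ is a complete normed polynomial ideal as well.

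I expect the only genuinely delicate point to be this last transfer from the multilinear to the polynomial setting, that is, keeping track of the polarization constants so that it is the \emph{intrinsic} polynomial norm, rather than the symmetrized multilinear norm, that is certified complete. The multilinear ideal property and completeness themselves are routine adaptations of the linear case already treated for $\left(\mathcal{D}_p,d_p\right)$, the only new ingredient being the product of $np$-norms in (\ref{def1}) in place of the single $p$-norm.
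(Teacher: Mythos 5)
Your proposal is correct and follows essentially the route the paper intends: the paper omits this proof as being ``obtained in a similar way to the previous,'' i.e.\ by mirroring the linear case $\left(\mathcal{D}_p,d_p\right)$ via the finite-form inequality (\ref{def1}), Remark \ref{des1} for the weak-$p^*$ factor, and the standard Cauchy argument using $\|T\|\leq\|T\|_{Coh,p}$, which is exactly what you do. Your extra care on the polynomial side --- noting that the intrinsic norm from (\ref{polcoh}) is only equivalent to $\|\check{P}\|_{Coh,p}$ with polarization constant $n^n/n!$, so that completeness transfers --- is sound (though one can also verify the polynomial case directly with the intrinsic norm, where the ideal inequality $\|tPu\|_{Coh,p}\leq\|t\|\,\|P\|_{Coh,p}\,\|u\|^n$ holds with constant $1$ without invoking polarization).
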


Finally, denoting by $\mathcal{L}_{mCoh,p}$ the class of all multilinear operators between Banach spaces that are multiple Cohen strongly $p$-summing, we show that $\left(\mathcal{L}_{mCoh,p}\ , ||\cdot||_{mCoh,p}\right)$ is a complete normed ideal of multilinear operators.

\vspace{0,15cm}
In that direction, Proposition \ref{inclusaom} shows that the components $\mathcal{L}_{mCoh,p}(E_1,...,E_n;F)$ contain the finite type multilinear operators, Proposition \ref{mCohnorm} ensures that these components are normed spaces and a standard calculus shows completeness. Since $||id_{\mathbb{K}^n}||=1$, the inequality $1 \leq ||id_{\mathbb{K}^n}||_{mCoh,p}$ is easily obtained. Moreover, as $\mathcal{L}_{Coh,p}$ is an ideal, $||id_{\mathbb{K}^n}||_{Coh,p}=1$ and Proposition \ref{inclusaom} gives us the inequality $||id_{\mathbb{K}^n}||_{mCoh,p} \leq 1$. It remains to show the ideal property.

\vspace{0,15cm}
\begin{theorem}
Let $n \in \mathbb{N}$. Then $\mathcal{L}_{mCoh,p}^n$ is a complete normed ideal of $n$-linear operators.
\end{theorem}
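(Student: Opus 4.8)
The plan is to establish the only requirement that is not already settled in the discussion preceding the statement, namely the ideal property, since that discussion guarantees that each component contains the finite type operators, is a normed space under $\|\cdot\|_{mCoh,p}$, is complete, and satisfies $\|id_{\mathbb{K}^n}\|_{mCoh,p}=1$. Throughout I would work with the inequality characterization $(iii)$ of Proposition \ref{mCohnorm}, since it is the form best suited to tracking how composition with continuous operators affects the defining estimate.

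Concretely, fix $M\in\mathcal{L}_{mCoh,p}(E_1,\dots,E_n;F)$, operators $u_j\in\mathcal{L}(G_j;E_j)$ for $j=1,\dots,n$, and $t\in\mathcal{L}(F;H)$; the goal is $tM(u_1,\dots,u_n)\in\mathcal{L}_{mCoh,p}(G_1,\dots,G_n;H)$. For arbitrary $m\in\mathbb{N}$, functionals $\psi_{j_1,\dots,j_n}\in H'$ and vectors $y_j^{(i)}\in G_i$, the key observation is the rewriting
\[
\psi_{j_1,\dots,j_n}\bigl(tM(u_1,\dots,u_n)(y_{j_1}^{(1)},\dots,y_{j_n}^{(n)})\bigr)=(\psi_{j_1,\dots,j_n}\circ t)\bigl(M(u_1 y_{j_1}^{(1)},\dots,u_n y_{j_n}^{(n)})\bigr),
\]
which absorbs $t$ into the functionals and each $u_i$ into the corresponding vectors. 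Applying (\ref{desmCohen}) for $M$ to the families $(u_i y_j^{(i)})_j$ and the functionals $(\psi_{j_1,\dots,j_n}\circ t)$ then bounds $\sum_{j_1,\dots,j_n=1}^m|\psi_{j_1,\dots,j_n}(\cdots)|$ by
\[
\|M\|_{mCoh,p}\,\prod_{i=1}^n\bigl\|(u_i y_j^{(i)})_{j=1}^m\bigr\|_p\cdot\bigl\|(\psi_{j_1,\dots,j_n}\circ t)_{j_1,\dots,j_n=1}^m\bigr\|_{w,p^*}.
\]

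To finish I would invoke two routine estimates. First, $\|(u_i y_j^{(i)})_{j=1}^m\|_p\le\|u_i\|\,\|(y_j^{(i)})_{j=1}^m\|_p$, immediate from $\|u_i y\|\le\|u_i\|\|y\|$. Second, the multi-indexed version of Remark \ref{des1}, valid verbatim under the identification of $\mathbb{N}^n$ with $\mathbb{N}$, yields $\|(\psi_{j_1,\dots,j_n}\circ t)\|_{w,p^*}\le\|t\|\,\|(\psi_{j_1,\dots,j_n})\|_{w,p^*}$. Combining these with the displayed bound shows that $tM(u_1,\dots,u_n)$ satisfies (\ref{desmCohen}), hence lies in $\mathcal{L}_{mCoh,p}$, and simultaneously gives the norm inequality
\[
\|tM(u_1,\dots,u_n)\|_{mCoh,p}\le\|t\|\,\|M\|_{mCoh,p}\,\|u_1\|\cdots\|u_n\|.
\]
There is no genuine obstacle here: the whole argument is a faithful multilinear transcription of the linear ideal computation (\ref{des2}), and the only point demanding a moment's care is recognizing that Remark \ref{des1} transfers unchanged to the multi-indexed weak norm.
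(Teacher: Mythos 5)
Your proposal is correct and follows essentially the same route as the paper's proof: both reduce the theorem to the ideal property (the other requirements having been settled in the preceding discussion), absorb $t$ into the functionals and each $u_i$ into the vectors via the identity $\psi_{j_1,\dots,j_n}\bigl(tM(u_1,\dots,u_n)(y_{j_1}^{(1)},\dots,y_{j_n}^{(n)})\bigr)=(\psi_{j_1,\dots,j_n}\circ t)\bigl(M(u_1 y_{j_1}^{(1)},\dots,u_n y_{j_n}^{(n)})\bigr)$, apply the inequality characterization (\ref{desmCohen}), and conclude with the elementary $\ell_p$-estimate and the multi-indexed form of Remark \ref{des1}. Your explicit remark that Remark \ref{des1} transfers verbatim under the identification of $\mathbb{N}^n$ with $\mathbb{N}$ is a point the paper uses silently, and it is indeed valid.
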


\begin{proof}
Let $A_i \in \mathcal{L}(H_i;E_i)$, $i=1,...,n$ , $T \in \mathcal{L}_{mCoh,p}(E_1,...,E_n;F)$ and $A \in \mathcal{L}(F;G)$. For all $m \in \mathbb{N}$, if $\varphi_{j_1,...,j_n} \in G^{'}$ and $x_j^{(i)} \in H_i$, $i=1,...,n, \ j_i=1,...,m, \ j=1,...,m$, Then
\begin{align*}
& \sum_{j_1,...,j_n=1}^m \left\vert \varphi_{j_1,...,j_n}\left(A \circ T \circ (A_1,...,A_n)\left(x_{j_1}^{(1)},...,x_{j_n}^{(n)}\right)\right) \right\vert \\
& = \sum_{j_1,...,j_n=1}^m \left\vert (\varphi_{j_1,...,j_n} \circ A)\left(T \left(A_1\left(x_{j_1}^{(1)}\right),...,A_n\left(x_{j_n}^{(n)}\right)\right)\right) \right\vert \\
& \leq ||T||_{mCoh,p} \left(\prod_{i=1}^n \left\Vert \left(A_i\left(x_{j}^{(i)}\right)\right)_{j=1}^m\right\Vert _p \right) ||(\varphi_{j_1,...,j_n} \circ A)_{j_1,...,j_n=1}^m||_{w,p^*} \\
& \leq ||A||\, ||T||_{mCoh,p}\, ||A_1|| \cdots ||A_n|| \left(\prod_{i=1}^n \left\Vert \left(x_{j}^{(i)}\right)_{j=1}^m\right\Vert _p \right) ||(\varphi_{j_1,...,j_n})_{j_1,...,j_n=1}^m||_{w,p^*}\ .
\end{align*}
So $A \circ T \circ (A_1,...,A_n) \in \mathcal{L}_{mCoh,p}(H_1,...,H_n;G)$ and
\[ ||A \circ T \circ (A_1,...,A_n)||_{mCoh,p} \leq ||A||\, ||T||_{mCoh,p}\, ||A_1|| \cdots ||A_n|| \ .\]
\end{proof}


\vspace{0,15cm}
\begin{definition}\label{defpolmult}
The class of multiple Cohen strongly $p$-summing $n$-homogeneous polynomials is the class
\[ \mathcal{P}_{mCoh,p}^n := \left\{P \in \mathcal{P}^n; \check{P} \in \mathcal{L}_{mCoh,p}^n\right\} \ .\]
\end{definition}
Furthermore, with the norm given by $||P||_{\mathcal{P}_{mCoh,p}} := ||\check{P}||_{mCoh,p}$,
we obtain a complete normed ideal of polynomials generated by the ideal $\mathcal{L}_{mCoh,p}$.

\vspace{0,15cm}
\section{Cohen strongly summing sequences: coherence and compatibility}\label{sc}

We show that the ideals of Cohen strongly $p$-summing polynomials/multilinear operators are coherent and compatible, as defined by Pellegrino and Ribeiro \cite{joilson11}. The same properties are shown to the class of multiple Cohen strongly $p$-summing operators.

First, we establish the concept of coherence and compatibility that we use.

Let $\left(  \mathcal{U}_{n},\mathcal{M}_{n}\right)_{n=1}^{N}$ be a sequence of pairs, where each $\mathcal{U}_{n}$ is a (quasi-)normed ideal of $n$-homogeneous polynomials and each $\mathcal{M}_{n}$ is a (quasi-)normed ideal of $n$-linear operators. The parameter $N$ can possibly be infinite.

\vspace{0,15cm}
\begin{definition}[Compatible pair of ideals]\label{defcomp}
Let $\mathcal{U}$ be a normed operator
ideal and $N\in\left(  \mathbb{N\smallsetminus}\left\{  1\right\}  \right)
\cup\left\{  \infty\right\}  $. A sequence $\left(  \mathcal{U}_{n}%
,\mathcal{M}_{n}\right)  _{n=1}^{N}$ with $\mathcal{U}_{1}=\mathcal{M}%
_{1}=\mathcal{U}$ is compatible with $\mathcal{U}$ if there exist positive
constants $\alpha_{1},\alpha_{2},\alpha_{3},\alpha_{4}$ such that for all
Banach spaces $E$ and $F,$ the following conditions hold for all $n\in\left\{
2,...,N\right\}  $:

\vspace{0,15cm}
\noindent (CP1) If $k\in\left\{  1,\ldots,n\right\}  ,$ $T\in\mathcal{M}_{n}\left(
E_{1},...,E_{n};F\right)  $ and $a_{j}\in E_{j}$ for all $j\in\left\{
1,...,n\right\}  \mathbb{\smallsetminus}\left\{  k\right\}  $, then
\[
T_{a_{1},\ldots,a_{k-1},a_{k+1},\ldots,a_{n}}\in\mathcal{U}\left(
E_{k};F\right)
\]
and
\[
\left\Vert T_{a_{1},\ldots,a_{k-1},a_{k+1},\ldots,a_{n}}\right\Vert
_{\mathcal{U}}\leq\alpha_{1}\left\Vert T\right\Vert _{\mathcal{M}_{n}%
}\left\Vert a_{1}\right\Vert \ldots\left\Vert a_{k-1}\right\Vert \left\Vert
a_{k+1}\right\Vert \ldots\left\Vert a_{n}\right\Vert .
\]
(CP2) If $P\in\mathcal{U}_{n}\left(  ^{n}E;F\right)  $ and $a\in E$, then
$P_{a^{n-1}}\in\mathcal{U}\left(  E;F\right)  $ and%
\[
\left\Vert P_{a^{n-1}}\right\Vert _{\mathcal{U}}\leq\alpha_{2}\left\Vert
\overset{\vee}{P}\right\Vert _{\mathcal{M}_{n}}\left\Vert a\right\Vert
^{n-1}.
\]
(CP3) If $u\in\mathcal{U}\left(  E_{n};F\right)  ,$ $\gamma_{j}\in
E_{j}^{\ast}$ for all $j=1,....,n-1$, then
\[
\gamma_{1}\cdots\gamma_{n-1}u\in\mathcal{M}_{n}\left(  E_{1},..,E_{n}%
;F\right)
\]
and
\[
\left\Vert \gamma_{1}\cdots\gamma_{n-1}u\right\Vert _{\mathcal{M}_{n}}%
\leq\alpha_{3}\left\Vert \gamma_{1}\right\Vert ...\left\Vert \gamma
_{n-1}\right\Vert \left\Vert u\right\Vert _{\mathcal{U}}.
\]
(CP4) If $u\in\mathcal{U}\left(  E;F\right)  ,$ $\gamma\in E^{\ast}$, then
$\gamma^{n-1}u\in\mathcal{U}_{n}\left(  ^{n}E;F\right)  $ and
\[
\left\Vert \gamma^{n-1}u\right\Vert _{\mathcal{U}_{n}}\leq\alpha_{4}\left\Vert
\gamma\right\Vert ^{n-1}\left\Vert u\right\Vert _{\mathcal{U}}.
\]
(CP5) $P$ belongs to $\mathcal{U}_{n}\left(  ^{n}E;F\right)  $ if, and only
if, $\overset{\vee}{P}$ belongs to $\mathcal{M}_{n}\left(  ^{n}E;F\right)  $.
\end{definition}

\vspace{0,15cm}
\begin{definition}
[Coherent pair of ideals]Let $\mathcal{U}$ be a normed operator
ideal and $N\in\mathbb{N}\cup\left\{  \infty\right\}  .$ A sequence $\left(
\mathcal{U}_{k},\mathcal{M}_{k}\right)  _{k=1}^{N},$ with $\mathcal{U}%
_{1}=\mathcal{M}_{1}=\mathcal{U}$, is coherent if there exist positive
constants $\beta_{1},\beta_{2},\beta_{3},\beta_{4}$ such that for all Banach
spaces $E$ and $F$ the following conditions hold for $k=1,...,N-1$:

\vspace{0,15cm}
\noindent (CH1) If $T\in\mathcal{M}_{k+1}\left(  E_{1},...,E_{k+1};F\right)  $ and
$a_{j}\in E_{j}$ for $j=1,\ldots,k+1,$ then
\[
T_{a_{j}}\in\mathcal{M}_{k}\left(  E_{1},\ldots,E_{j-1},E_{j+1},\ldots
,E_{k+1};F\right)
\]
and
\[
\left\Vert T_{a_{j}}\right\Vert _{\mathcal{M}_{k}}\leq\beta_{1}\left\Vert
T\right\Vert _{\mathcal{M}_{k+1}}\left\Vert a_{j}\right\Vert .
\]
(CH2) If $P\in\mathcal{U}_{k+1}\left(  ^{k+1}E;F\right)  ,$ $a\in E$, then
$P_{a}$ belongs to $\mathcal{U}_{k}\left(  ^{k}E;F\right)  $ and
\[
\left\Vert P_{a}\right\Vert _{\mathcal{U}_{k}}\leq\beta_{2}\left\Vert
\overset{\vee}{P}\right\Vert _{\mathcal{M}_{k+1}}\left\Vert a\right\Vert .
\]
(CH3) If $T\in\mathcal{M}_{k}\left(  E_{1},...,E_{k};F\right)  ,$ $\gamma\in
E_{k+1}^{\ast}$, then
\[
\gamma T\in\mathcal{M}_{k+1}\left(  E_{1},...,E_{k+1};F\right)  \text{ and
}\left\Vert \gamma T\right\Vert _{\mathcal{M}_{k+1}}\leq\beta_{3}\left\Vert
\gamma\right\Vert \left\Vert T\right\Vert _{\mathcal{M}_{k}}.
\]
(CH4)If $P\in\mathcal{U}_{k}\left(  ^{k}E;F\right)  ,$ $\gamma\in E^{\ast}$,
then
\[
\gamma P\in\mathcal{U}_{k+1}\left(  ^{k+1}E;F\right)  \text{ and }\left\Vert
\gamma P\right\Vert _{\mathcal{U}_{k+1}}\leq\beta_{4}\left\Vert \gamma
\right\Vert \left\Vert P\right\Vert _{\mathcal{U}_{k}}.
\]
(CH5) For all $k=1,...,N,$ $P$ belongs to $\mathcal{U}_{k}\left(
^{k}E;F\right)  $ if, and only if, $\overset{\vee}{P}$ belongs to
$\mathcal{M}_{k}\left(  ^{k}E;F\right)  $.
\end{definition}

\vspace{0,15cm}
According to the above definitions, coherence does not necessarily imply compatibility. However, under restrictions on the constants $\beta_i$, $i=1,...,4$, we have:

\vspace{0,15cm}
\begin{proposition}[Pellegrino and Ribeiro, \cite{joilson11}]\label{coerImpcomp}
If $\left(  \mathcal{U}_{n},\mathcal{M}_{n}\right)  _{n=1}^{N}$ is coherent with $\beta_{1}=\beta_{2}=\beta_{3}=\beta_{4}=1$, then is compatible with the ideal $\mathcal{U}_{1}=\mathcal{M}_{1}=\mathcal{U}$.
\end{proposition}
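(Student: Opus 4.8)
The plan is to deduce each of the five compatibility conditions (CP1)--(CP5) from the coherence conditions by iterating the corresponding incremental coherence condition $n-1$ times. Since each incremental step contributes exactly one factor $\beta_i=1$, the resulting compatibility constants telescope to $\alpha_i=\beta_i^{\,n-1}=1$, which are admissible positive constants. Throughout I fix $n\in\{2,\dots,N\}$ and repeatedly use that $\mathcal{U}_1=\mathcal{M}_1=\mathcal{U}$. First, (CP5) is immediate: it is exactly the instance $k=n$ of (CH5).

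For the three ``telescoping'' conditions I would argue as follows. For (CP1), start from $T\in\mathcal{M}_n(E_1,\dots,E_n;F)$ and apply (CH1) successively to freeze the $n-1$ variables $a_j$ with $j\neq k$, one at a time; after each application the degree drops by one, so after $n-1$ steps one lands in $\mathcal{M}_1(E_k;F)=\mathcal{U}(E_k;F)$ with norm at most $\beta_1^{\,n-1}\|T\|_{\mathcal{M}_n}\prod_{j\neq k}\|a_j\|=\|T\|_{\mathcal{M}_n}\prod_{j\neq k}\|a_j\|$, giving $\alpha_1=1$. Dually, for (CP3) start from $u\in\mathcal{U}(E_n;F)=\mathcal{M}_1(E_n;F)$ and apply (CH3) successively to attach the functionals $\gamma_1,\dots,\gamma_{n-1}$, raising the degree from $1$ to $n$ and producing $\gamma_1\cdots\gamma_{n-1}u\in\mathcal{M}_n$ with $\alpha_3=\beta_3^{\,n-1}=1$; here the only care required is bookkeeping of the slot into which each new functional is inserted. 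Likewise (CP4) follows by viewing $u\in\mathcal{U}(E;F)$ as an element of $\mathcal{U}_1(^1E;F)$ and applying (CH4) exactly $n-1$ times, each time multiplying by $\gamma$, to reach $\gamma^{n-1}u\in\mathcal{U}_n(^nE;F)$ with $\alpha_4=\beta_4^{\,n-1}=1$.

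The remaining condition (CP2) is where I expect the only genuine subtlety, and it is instructive that its prescribed bound is stated in terms of the multilinear norm $\|\check{P}\|_{\mathcal{M}_n}$ rather than $\|P\|_{\mathcal{U}_n}$. A naive iteration of (CH2) does not telescope cleanly: one application gives $\|P_a\|_{\mathcal{U}_k}\leq\beta_2\|\check{P}\|_{\mathcal{M}_{k+1}}\|a\|$, but a second application would require controlling the multilinear norm $(P_a)^{\vee}$ of the already-evaluated polynomial, which is not furnished by (CH2) itself. Instead I would route through the symmetric multilinear form: by (CH5), $\check{P}\in\mathcal{M}_n$, and since $P_{a^{n-1}}=\check{P}(a^{n-1},\cdot)$ is precisely the linear operator obtained from $\check{P}$ by freezing $n-1$ copies of $a$, the iterated (CH1) argument used for (CP1) applies verbatim and yields $P_{a^{n-1}}\in\mathcal{M}_1=\mathcal{U}$ together with $\|P_{a^{n-1}}\|_{\mathcal{U}}\leq\beta_1^{\,n-1}\|\check{P}\|_{\mathcal{M}_n}\|a\|^{n-1}$, so that $\alpha_2=1$.

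Collecting the constants $\alpha_1=\alpha_2=\alpha_3=\alpha_4=1$ together with (CP5), all the requirements of Definition \ref{defcomp} are met, whence the pair is compatible with $\mathcal{U}$. The main obstacle, as indicated, is (CP2): the point is to recognize that (CH2) should be bypassed in favour of the combination of (CH5) with the iterated (CH1), which is exactly what the form of the target inequality (stated in the $\mathcal{M}_n$-norm of $\check{P}$) suggests.
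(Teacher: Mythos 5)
The paper itself contains no proof of this proposition: it is imported verbatim, with attribution, from Pellegrino and Ribeiro \cite{joilson11}, so there is no internal argument to compare yours against. On its own merits, your iteration scheme is the standard (and surely the intended) one, and most of it is sound: (CP5) is indeed the instance $k=n$ of (CH5); (CP1) follows by freezing the vectors $a_j$ one slot at a time via (CH1), which explicitly allows evaluation in an arbitrary coordinate, so no ordering problem arises there; (CP4) is the $(n-1)$-fold application of (CH4); and your handling of (CP2) is exactly the right observation. Naive iteration of (CH2) does stall, since its bound is phrased in terms of $\left\Vert \check{P}\right\Vert _{\mathcal{M}_{k+1}}$ and a second application would need control of the multilinear norm of $(P_a)^{\vee}$; your route through (CH5) plus the iterated (CH1) applied to $\check{P}$ is valid (an equivalent repair: one application of (CH2) followed by $n-2$ applications of (CH1), using $(P_a)^{\vee}=\check{P}_a$), and it yields $\alpha_2=\beta_1^{\,n-1}=1$ as required. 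You also correctly isolate why the hypothesis $\beta_i=1$ matters: for $N=\infty$ the compatibility constants must be uniform in $n$, and the telescoped $\beta_i^{\,n-1}$ is bounded only when $\beta_i=1$.

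The one place where you wave off a real subtlety is (CP3). As (CH3) is literally stated, the new functional is attached in the \emph{last} slot: from $T\in\mathcal{M}_{k}\left(E_{1},...,E_{k};F\right)$ and $\gamma\in E_{k+1}^{\ast}$ one obtains the map $(x_1,\dots,x_{k+1})\mapsto\gamma(x_{k+1})T(x_1,\dots,x_k)$. Iterating from $u\in\mathcal{M}_1$ therefore leaves the operator acting on the \emph{first} variable, producing a member of a component of the form $\mathcal{M}_{n}\left(E_{n},E_{n-1},\dots,E_{1};F\right)$, whereas (CP3) asks for $(x_1,\dots,x_n)\mapsto\gamma_1(x_1)\cdots\gamma_{n-1}(x_{n-1})u(x_n)$, with the operator in the last slot. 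The multi-ideal axioms only permit slot-preserving composition with linear operators, so they cannot permute variables, and your remark that ``the only care required is bookkeeping'' cannot literally be cashed out inside the definitions as transcribed here: one must either read (CH3) as allowing insertion of $\gamma$ into an arbitrary coordinate (which is how \cite{joilson11} and the surrounding literature intend it) or invoke an explicit permutation-stability convention for the ideals. Under that reading your proof is complete with $\alpha_1=\alpha_2=\alpha_3=\alpha_4=1$; without it, (CP3) is the one condition your argument does not formally reach, so this deserves an explicit sentence rather than a parenthetical.
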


\vspace{0,15cm}
We now show that the sequence $(\mathcal{P}_{Coh,p}^n\ ,\mathcal{L}_{Coh,p}^n)_{n=1}^\infty$, composed by the ideal of $n$-homogeneous polynomials and ideals of Cohen strongly $p$-summing $n$-linear operators, is coherent and compatible with the ideal of Cohen strongly  $p$-summing linear operators.

\vspace{0,15cm}
\begin{theorem} The sequence $(\mathcal{P}_{Coh,p}^n,\mathcal{L}_{Coh,p}^n)_{n=1}^\infty$ is coherent and compatible with the ideal $\mathcal{D}_p$.
\end{theorem}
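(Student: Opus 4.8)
The plan is to reduce everything to coherence. By Proposition \ref{coerImpcomp}, once we verify that $(\mathcal{P}_{Coh,p}^n,\mathcal{L}_{Coh,p}^n)_{n=1}^\infty$ is coherent with $\beta_1=\beta_2=\beta_3=\beta_4=1$, compatibility with $\mathcal{U}=\mathcal{D}_p$ follows for free. By definition, for $n=1$ the linear, multilinear and polynomial notions coincide, so $\mathcal{U}_1=\mathcal{M}_1=\mathcal{D}_p$; and condition (CH5) is exactly the definition of $\mathcal{P}_{Coh,p}^k$ together with the polarization equivalence ``$P$ is Cohen summing if and only if $\check{P}$ is''. So it remains to check (CH1)--(CH4) with unit constants.

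The one tool I would use throughout is the Pietsch domination (\ref{dom}) provided by Theorem \ref{teo2}: $T\in\mathcal{L}_{Coh,p}^{k}(E_1,\dots,E_k;F)$ if and only if there is a Borel probability measure $\mu$ on $B_{F''}$ with
\[
|\varphi(T(x_1,\dots,x_k))|\leq \|T\|_{Coh,p}\,\|x_1\|\cdots\|x_k\|\left(\int_{B_{F''}}|\psi(\varphi)|^{p^*}d\mu(\psi)\right)^{1/p^*}.
\]
Tracing the constants through the chain ``(ii)$\Rightarrow$(iii)$\Rightarrow$(i)'' of Theorem \ref{teo2}, the optimal domination constant may be taken equal to the Cohen norm $\|T\|_{Coh,p}$, and, crucially, the integral factor on the right does not involve the vectors $x_j$. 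This is precisely the feature that the summing inequality (\ref{defq}) lacks, where the exponent $np$ changes with the degree; working with the variable-free inequality (\ref{dom}) is what keeps every coherence constant equal to $1$.

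With this in hand the four conditions become short. For (CH1), fix a variable $x_j=a_j$ in the domination of $T\in\mathcal{L}_{Coh,p}^{k+1}$; the factor $\|a_j\|$ simply comes out of the product, leaving the domination inequality for the $k$-linear map $T_{a_j}$ with constant $\|T\|_{Coh,p}\|a_j\|$, so $T_{a_j}\in\mathcal{L}_{Coh,p}^{k}$ and $\|T_{a_j}\|_{Coh,p}\leq\|T\|_{Coh,p}\|a_j\|$, i.e. $\beta_1=1$. Condition (CH3) is its dual: writing $(\gamma T)(x_1,\dots,x_{k+1})=\gamma(x_{k+1})T(x_1,\dots,x_k)$ and using $|\gamma(x_{k+1})|\leq\|\gamma\|\|x_{k+1}\|$ turns the domination for $T$ into that for $\gamma T$ with constant $\|\gamma\|\|T\|_{Coh,p}$, giving $\beta_3=1$. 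Condition (CH2) reduces to (CH1): since $\check{P}$ is symmetric, fixing one slot to $a$ yields $\check{P_a}=(\check{P})_a$, whence $\|P_a\|_{\mathcal{P}_{Coh,p}}=\|(\check{P})_a\|_{Coh,p}\leq\|\check{P}\|_{Coh,p}\|a\|$ and $\beta_2=1$.

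Finally, for (CH4) I would argue directly on $\gamma P$, whose associated symmetric form is the symmetrization $\check{\gamma P}(x_1,\dots,x_{k+1})=\frac{1}{k+1}\sum_{j=1}^{k+1}\gamma(x_j)\,\check{P}(x_1,\dots,\widehat{x_j},\dots,x_{k+1})$. Dominating each summand by $\|\gamma\|\,\|P\|_{\mathcal{P}_{Coh,p}}\prod_{l}\|x_l\|\bigl(\int_{B_{F''}}|\psi(\varphi)|^{p^*}d\mu\bigr)^{1/p^*}$, all $k+1$ terms using the single measure $\mu$ attached to $\check{P}$, and then averaging, keeps exactly the same bound, so $\gamma P\in\mathcal{P}_{Coh,p}^{k+1}$ with $\beta_4=1$. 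The only genuinely delicate point in the whole argument is the one flagged above, namely justifying that the domination constant can be taken equal to $\|\cdot\|_{Coh,p}$ so that all four constants are exactly $1$ and Proposition \ref{coerImpcomp} applies; everything else is a routine manipulation of the variable-free domination inequality.
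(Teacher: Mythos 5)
Your global strategy --- verify coherence with $\beta_1=\beta_2=\beta_3=\beta_4=1$ and invoke Proposition \ref{coerImpcomp} --- is exactly the paper's, and your handling of (CH1) and (CH3) via the domination inequality (\ref{dom}) is correct: your preliminary point that the optimal domination constant may be taken equal to $\|T\|_{Coh,p}$ is right, since every implication in the cycle of Theorem \ref{teo2} preserves the constant, and the paper itself tacitly uses this when it applies the product-form inequality of item $(i)$ with constant $\|T\|_{Coh,p}$. The genuine gap is in (CH4). The norm on $\mathcal{P}_{Coh,p}^{k}$ is the intrinsic (diagonal) polynomial norm from (\ref{polcoh}), and (CH4) demands $\|\gamma P\|_{Coh,p}\leq \beta_4 \|\gamma\|\,\|P\|_{Coh,p}$ with that norm on the right. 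Your route through the symmetrization $(\gamma P)^{\vee}$ uses the measure and domination constant attached to the multilinear map $\check{P}$, and that constant is $\|\check{P}\|_{Coh,p}$, not $\|P\|_{\mathcal{P}_{Coh,p}}$ as you wrote: a multilinear domination for $\check{P}$ is an off-diagonal statement and cannot be extracted from the polynomial norm of $P$ without polarization. So your computation actually proves $\|\gamma P\|_{Coh,p}\leq\|(\gamma P)^{\vee}\|_{Coh,p}\leq\|\gamma\|\,\|\check{P}\|_{Coh,p}$; since in general only $\|P\|_{Coh,p}\leq\|\check{P}\|_{Coh,p}$ holds, with the reverse costing a degree-dependent polarization factor, this gives neither $\beta_4=1$ (which Proposition \ref{coerImpcomp} requires exactly) nor even a $k$-independent $\beta_4$ (which the very definition of coherence requires, the constants being fixed before the quantification over $k$).

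The repair is to bypass the symmetric form entirely. The paper absorbs the scalar into the argument by $k$-homogeneity, $|\varphi_i(\gamma(x_i)P(x_i))|=|\varphi_i(P(|\gamma(x_i)|^{1/k}x_i))|$, and applies (\ref{polcoh}) for $P$, noting that $\sum_i \| |\gamma(x_i)|^{1/k}x_i\|^{kp}=\sum_i|\gamma(x_i)|^{p}\|x_i\|^{kp}\leq\|\gamma\|^{p}\sum_i\|x_i\|^{(k+1)p}$, which is precisely the degree-$(k+1)$ inequality with constant $\|\gamma\|\,\|P\|_{Coh,p}$. Alternatively, staying inside your domination framework, apply the Full General PDT directly to polynomials (take $\mathcal{H}=\mathcal{P}(^{k}E;F)$, $p_0=1$, $p_1=p$, $p_2=p^{\ast}$, $R_1(\cdot,0,x)=\|x\|^{k}$) to get $|\varphi(P(x))|\leq\|P\|_{Coh,p}\|x\|^{k}\left(\int_{B_{F''}}|\psi(\varphi)|^{p^{\ast}}d\mu\right)^{1/p^{\ast}}$, whence $|\varphi(\gamma P(x))|\leq\|\gamma\|\,\|P\|_{Coh,p}\|x\|^{k+1}\left(\int_{B_{F''}}|\psi(\varphi)|^{p^{\ast}}d\mu\right)^{1/p^{\ast}}$ and $\beta_4=1$. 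A smaller slip of the same nature occurs in your (CH2): the equality $\|P_a\|_{\mathcal{P}_{Coh,p}}=\|(\check{P})_a\|_{Coh,p}$ is unjustified (the polynomial norm is the diagonal one), but there only the inequality $\|P_a\|_{Coh,p}\leq\|(P_a)^{\vee}\|_{Coh,p}$ is needed, and that follows by restricting the multilinear inequality for $(P_a)^{\vee}=(\check{P})_a$ to the diagonal; since (CH2) asks for a bound by $\|\check{P}\|_{\mathcal{M}_{k+1}}\|a\|$, your chain then closes with $\beta_2=1$.
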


\begin{proof}
Using Proposition \ref{coerImpcomp}, we show that the sequence is coherent with constants $\beta_i=1$, $i=1,...,4$.

\vspace{0,15cm}
$(CH5)$ See Section \ref{ol}.

$(CH1)$ Since $T \in \mathcal{L}_{Coh,p}(E_1,...,E_{n+1};F)$, we have, $\forall m \in \mathbb{N}$,
\begin{align*}
\sum_{i=1}^m |\varphi_i(T_{a_1}(x_i^{(1)},...,x_i^{(n)}))|  &  = \sum_{i=1}^m |\varphi_i(T(a_1,x_i^{(1)},...,x_i^{(n)}))| \\
& \leq ||T||_{Coh,p} \left( \sum_{i=1}^m ||a_1||^p ||x_i^{(1)}||^p \cdots ||x_i^{(n)}||^p\right)^{1/p} ||(\varphi_i)_{i=1}^m||_{w,p^*} \\
& = ||T||_{Coh,p} ||a_1|| \left( \sum_{i=1}^m ||x_i^{(1)}||^p \cdots ||x_i^{(n)}||^p\right)^{1/p} ||(\varphi_i)_{i=1}^m||_{w,p^*} \ ,
\end{align*}
so $T_{a_1} \in \mathcal{L}_{Coh,p}(E_2,...,E_{n+1};F)$ and $||T_{a_1}||_{Coh,p} \leq ||T||_{Coh,p} ||a_1||$.
By proceeding in a similar way, we found that $T_{a_j} \in \mathcal{L}_{Coh,p}(E_1,...,E_{j-1},E_{j+1},...,E_{n+1};F)$,  $j=2,...,n$ and \[||T_{a_j}||_{Coh,p} \leq ||T||_{Coh,p} ||a_j||.\]

$(CH2)$ Since $P \in \mathcal{P}_{Coh,p}(^{n+1}E;F)$, we have, $\forall m \in \mathbb{N}$,
\begin{align*}
\sum_{i=1}^m |\varphi_i(P_a(x_i))|  &  = \sum_{i=1}^m |\varphi_i(\check{P}(a,x_i,...,x_i))| \\
& \leq ||\check{P}||_{Coh,p} \left( \sum_{i=1}^m ||a||^p ||x_i||^{np} \right)^{1/p} ||(\varphi_i)_{i=1}^m||_{w,p^*} \\
& = ||\check{P}||_{Coh,p} ||a|| \left( \sum_{i=1}^m ||x_i||^{np}\right)^{1/p} ||(\varphi_i)_{i=1}^m||_{w,p^*}
\end{align*}
and therefore $P_a \in \mathcal{P}_{Coh,p}(^nE;F)$ and $||P_a||_{Coh,p} \leq ||\check{P}||_{Coh,p} ||a||$.

$(CH3)$ If $T \in \mathcal{L}_{Coh,p}(E_1,...,E_n;F)$ and $\gamma \in E_{n+1}^{'}$, we obtain, $\forall m \in \mathbb{N}$,
\begin{align*}
& \sum_{i=1}^m |\varphi_i(\gamma T(x_i^{(1)},...,x_i^{(n)},x_i^{(n+1)}))| \\
&  = \sum_{i=1}^m |\varphi_i(T(x_i^{(1)},...,x_i^{(n)})\gamma(x_i^{(n+1)}))| \\
& = \sum_{i=1}^m |\varphi_i(T(x_i^{(1)},...,x_i^{(n)} \gamma(x_i^{(n+1)})))| \\
& \leq ||T||_{Coh,p} \left(\sum_{i=1}^m ||x_i^{(1)}||^p \cdots ||x_i^{(n-1)}||^p \, ||x_i^{(n)}\gamma (x_i^{(n+1)})||^p \right)^{1/np}\ ||(\varphi_i)_{i=1}^m||_{w,p^*} \\
&  \leq ||T||_{Coh,p} ||\gamma || \left(\sum_{i=1}^m \prod_{j=1}^{n+1} ||x_i^{(j)}||^p \right)^{1/p}\ ||(\varphi_i)_{i=1}^m||_{w,p^*}\
\end{align*}
and so $\gamma T$ belongs to $\mathcal{L}_{Coh,p}(E_1,...,E_n,E_{n+1};F)$ and  $||\gamma T||_{Coh,p} \leq ||T||_{Coh,p} ||\gamma||$.

$(CH4)$ If $P \in \mathcal{P}_{Coh,p}(^nE;F)$ and $\gamma \in E^{'}$, we have, $\forall m \in \mathbb{N}$,
\begin{align*}
\sum_{i=1}^m |\varphi_i(\gamma P(x_i))| &  = \sum_{i=1}^m |\varphi_i(\gamma (x_i) P(x_i))| \\
& = \sum_{i=1}^m |\varphi_i(P((\gamma (x_i))^{1/n}x_i))| \\
& \leq ||P||_{Coh,p} \left(\sum_{i=1}^m ||(\gamma (x_i))^{1/n}x_i||^{np} \right)^{1/p}\ ||(\varphi_i)_{i=1}^m||_{w,p^*} \\
&  \leq ||P||_{Coh,p} ||\gamma || \left(\sum_{i=1}^m ||x_i||^{np} \right)^{1/p}\ ||(\varphi_i)_{i=1}^m||_{w,p^*}\
\end{align*}
which implies  that $\gamma P$ belongs to $\mathcal{P}_{Coh,p}(^{n+1}E;F)$ and  $||\gamma P||_{Coh,p} \leq ||P||_{Coh,p} ||\gamma||$.

Thus, by Proposition \ref{coerImpcomp}, $(\mathcal{P}_{Coh,p}^n,\mathcal{L}_{Coh,p}^n)_{n=1}^\infty$ is coherent and compatible with the ideal $\mathcal{D}_p$.
\end{proof}

Now, we show that the ideals of polynomials multiple Cohen strongly $p$-summing operators form sequences which are coherent and compatible with the ideal of Cohen strongly  $p$-summing linear operators.

\vspace{0,15cm}
\begin{theorem} The sequence $(\mathcal{P}_{mCoh,p}^n,\mathcal{L}_{mCoh,p}^n)_{n=1}^\infty$ is coherent and compatible with the ideal $\mathcal{D}_p$ of Cohen strongly $p$-summing linear operators.
\end{theorem}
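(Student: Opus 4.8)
The plan is to invoke Proposition \ref{coerImpcomp}: it suffices to prove that $(\mathcal{P}_{mCoh,p}^{n},\mathcal{L}_{mCoh,p}^{n})_{n=1}^{\infty}$ is coherent with all four constants $\beta_{1}=\beta_{2}=\beta_{3}=\beta_{4}=1$, and compatibility then follows for free. Throughout I would work with the sequence characterization (\ref{desmCohen}) of Proposition \ref{mCohnorm}, settling the two multilinear conditions (CH1) and (CH3) first and then deducing the two polynomial conditions (CH2) and (CH4) from them together with the defining relation $\check{P}\in\mathcal{L}_{mCoh,p}^{n}$ of Definition \ref{defpolmult}. Condition (CH5) is precisely that definition and needs no further argument.

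For (CH1), given $T\in\mathcal{L}_{mCoh,p}(E_{1},\dots,E_{k+1};F)$ and $a_{j}\in E_{j}$, I would apply (\ref{desmCohen}) to $T$ with the $j$-th coordinate filled by the one-term sequence $(a_{j})$ (so the index $j_{j}$ ranges only over $\{1\}$) and the remaining $k$ coordinates filled by arbitrary finite sequences. A one-term sequence has $\ell_{p}$-norm $\|a_{j}\|$, and restricting one index to a singleton leaves the weak $\ell_{p^{*}}$-norm of the functionals unchanged, so this yields exactly $\|T_{a_{j}}\|_{mCoh,p}\le\|T\|_{mCoh,p}\|a_{j}\|$; the other coordinates are handled identically by relabelling.

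The heart of the argument is (CH3), and this is where I expect the main difficulty. Given $T\in\mathcal{L}_{mCoh,p}(E_{1},\dots,E_{k};F)$ and $\gamma\in E_{k+1}^{\ast}$, I would absorb the scalar into the last active slot, writing $\gamma T(x^{(1)},\dots,x^{(k+1)})=T(x^{(1)},\dots,x^{(k-1)},\gamma(x^{(k+1)})x^{(k)})$. The phenomenon peculiar to the multiple setting is that $\gamma(x_{j_{k+1}}^{(k+1)})x_{j_{k}}^{(k)}$ carries \emph{two} independent indices, so it cannot be viewed as a sequence in $j_{k}$ alone and the single-index trick used for $\mathcal{L}_{Coh,p}$ is unavailable. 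Instead I would merge $(j_{k},j_{k+1})$ into a single index $\ell$ and apply (\ref{desmCohen}) to $T$ with the $k$ index-blocks $j_{1},\dots,j_{k-1},\ell$. The decisive observation is that the weak $\ell_{p^{*}}$-norm of a family of functionals depends only on the underlying indexed set and not on how the indices are grouped, so regrouping $(j_{1},\dots,j_{k+1})$ as $(j_{1},\dots,j_{k-1},\ell)$ leaves $\|(\varphi)\|_{w,p^{*}}$ untouched; meanwhile the merged $\ell_{p}$-norm factors as $\|(\gamma(x_{j_{k+1}}^{(k+1)})x_{j_{k}}^{(k)})_{\ell}\|_{p}=\|(x^{(k)})\|_{p}(\sum_{j_{k+1}}|\gamma(x_{j_{k+1}}^{(k+1)})|^{p})^{1/p}\le\|\gamma\|\,\|(x^{(k)})\|_{p}\,\|(x^{(k+1)})\|_{p}$. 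Together these give $\|\gamma T\|_{mCoh,p}\le\|\gamma\|\,\|T\|_{mCoh,p}$.

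Finally I would derive the two polynomial conditions from the multilinear ones. For (CH2) I would use that the polar of $P_{a}$ is the restriction $(\check{P})_{a}$ of the symmetric form $\check{P}$ obtained by fixing one argument equal to $a$: this keeps symmetry in the remaining variables and agrees with $P_{a}$ on the diagonal, so by uniqueness of the polar $\check{P_{a}}=(\check{P})_{a}$, and (CH1) applied to $\check{P}$ yields $\|P_{a}\|_{\mathcal{P}_{mCoh,p}}=\|(\check{P})_{a}\|_{mCoh,p}\le\|\check{P}\|_{mCoh,p}\|a\|$. For (CH4) I would write $\check{(\gamma P)}$ as the symmetrization of the map $(x_{1},\dots,x_{k+1})\mapsto\gamma(x_{k+1})\check{P}(x_{1},\dots,x_{k})$; this unsymmetrized map lies in $\mathcal{L}_{mCoh,p}$ by (CH3), each relabelling of its variables has the same $mCoh,p$-norm because the inequality (\ref{desmCohen}) is invariant under permuting coordinates, and the $mCoh,p$-norm of an average is at most the average of the norms, whence $\|\gamma P\|_{\mathcal{P}_{mCoh,p}}\le\|\gamma\|\,\|P\|_{\mathcal{P}_{mCoh,p}}$. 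With $\beta_{1}=\beta_{2}=\beta_{3}=\beta_{4}=1$ established, Proposition \ref{coerImpcomp} delivers both coherence and compatibility with $\mathcal{D}_{p}$.
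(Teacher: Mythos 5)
Your proposal is correct and follows essentially the same route as the paper: reduction to coherence with $\beta_1=\beta_2=\beta_3=\beta_4=1$ via Proposition \ref{coerImpcomp}, (CH5) directly from Definition \ref{defpolmult}, (CH1) by fixing one slot with a singleton/zero-padded index family, (CH3) by absorbing $\gamma(x^{(n+1)})$ into the $n$-th slot and merging the two indices into one of length $m^2$, using that regrouping leaves $\|\cdot\|_{w,p^*}$ unchanged while the merged $\ell_p$-norm factors, and (CH2) via $(P_a)^\vee=(\check{P})_a$. Your (CH4), phrased as symmetrization plus the triangle inequality and permutation-invariance of the norm, is exactly the paper's explicit computation $(\gamma P)^\vee(x_1,\ldots,x_{n+1})=\frac{1}{n+1}\sum_{k=1}^{n+1}\gamma(x_k)\check{P}(x_1,\overset{[k]}{\cdots},x_{n+1})$ followed by bounding each of the $n+1$ terms with the (CH3) merging argument, so the two proofs coincide in substance.
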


\begin{proof}
Again, using Proposition \ref{coerImpcomp}, we show that the sequence is coherent with constants $\beta_i=1$, $i=1,...,4$.

\vspace{0,15cm}
$(CH5)$ Follows immediately from Definition \ref{defpolmult}.

\vspace{0,15cm}
$(CH1)$ We show that if $T \in \mathcal{L}_{mCoh,p}(E_1,...,E_{n+1};F)$, then \\ $T_{a_{n+1}} \in \mathcal{L}_{mCoh,p}(E_1,...,E_{n};F)$.
For all $m \in \mathbb{N}$, taking the functionals

\begin{displaymath}
\varphi_{j_1,...,j_{n},j_{n+1}} = \left\{
\begin{array}{l}
\varphi_{j_1,...,j_{n}},\ \mathrm{if} \ j_{n+1} = 1 \\
0, \ \mathrm{if}\ j_{n+1}=2,...,m\ ,
\end{array} \right.
\end{displaymath}
with $j_1,...,j_{n},j_{n+1}=1,...,m$ , since $T \in \mathcal{L}_{mCoh,p}(E_1,...,E_{n+1};F)$, we obtain,

\begin{align*}
& \sum_{j_1,...,j_{n}=1}^m |\varphi_{j_1,...,j_{n}}(T_{a_{n+1}}(x_{j_1}^{(1)},...,x_{j_n}^{(n)}))|  = \sum_{j_1,...,j_{n},j_{n+1}=1}^m |\varphi_{j_1,...,j_{n},j_{n+1}}(T(x_{j_1}^{(1)},...,x_{j_n}^{(n)},a_{n+1}))| \\
& \leq ||T||_{mCoh,p}\, ||a_{n+1}||\, \left\Vert\left(x_{j}^{(1)}\right)_{j=1}^m\right\Vert_p \cdots \left\Vert \left(x_{j}^{(n)}\right)_{j=1}^m\right\Vert_p\, ||(\varphi_{j_1,...,j_{n},j_{n+1}})_{j_1,...,j_{n},j_{n+1}=1}^m||_{w,p^*} \\
& = ||T||_{mCoh,p}\, ||a_{n+1}||\, \left\Vert\left(x_{j}^{(1)}\right)_{j=1}^m\right\Vert_p \cdots \left\Vert \left(x_{j}^{(n)}\right)_{j=1}^m\right\Vert_p\, ||(\varphi_{j_1,...,j_{n}})_{j_1,...,j_{n}=1}^m||_{w,p^*}\ ,
\end{align*}
so $T_{a_{n+1}} \in \mathcal{L}_{mCoh,p}(E_1,...,E_{n};F)$ and $||T_{a_{n+1}}||_{mCoh,p} \leq ||T||_{mCoh,p}\, ||a_{n+1}||$.
Analogously,  \[T_{a_j} \in \mathcal{L}_{mCoh,p}(E_1,...,E_{j-1},E_{j+1},...,E_{n+1};F),\  j=1,...,n\] and $||T_{a_j}||_{mCoh,p} \leq ||T||_{mCoh,p}\, ||a_j||$.

\vspace{0,15cm}
$(CH2)$ Having in mind that $(P_a)^\vee = \check{P}_a$, the result follows as a consequence of the two previous items.

$(CH3)$ Let $\gamma \in E_{n+1}^{'}$. For all positive integer $m$ we have
\begin{align}\label{par1} \nonumber
& \sum_{j_1,...,j_{n+1}=1}^m \left\vert \varphi_{j_1,...,j_{n+1}}\left(\gamma T(x_{j_1}^{(1)},...,x_{j_n}^{(n)},x_{j_{n+1}}^{(n+1)})\right)\right\vert \nonumber \\
&  = \sum_{j_1,...,j_{n+1}=1}^m \left\vert \varphi_{j_1,...,j_{n+1}}\left(T(x_{j_1}^{(1)},...,x_{j_n}^{(n)}\gamma(x_{j_{n+1}}^{(n+1)}))\right)\right\vert \ ,
\end{align}
and the expression (\ref{par1}) can be rewritten as
\begin{equation}\label{par2}
\sum_{j_n=1}^{m^2} \sum_{j_1,...,j_{n-1}=1}^m \left\vert \tilde{\varphi}_{j_1,...,j_{n}}\left(T(z_{j_1}^{(1)},...,z_{j_n}^{(n)})\right)\right\vert \\
\end{equation}
with the choices
\vspace{0,2cm}
\begin{displaymath}
\left\{
\begin{array}{l}
z_{j_i}^{(i)}= x_{j_i}^{(i)},\ j_i=1,...,m\ \mathrm{and}\ \ i=1,...,n-1\\
z_{j_n}^{(n)}= x_{j_n}^{(n)}\gamma (x_{1}^{(n+1)}),\ j_n=1,...,m \\
z_{m+j_n}^{(n)}= x_{j_n}^{(n)}\gamma (x_{2}^{(n+1)}),\ j_n=1,...,m \\
\ \ \ \vdots \\
z_{(m-1)m +j_n}^{(n)}= x_{j_n}^{(n)}\gamma (x_{m}^{(n+1)}),\ j_n=1,...,m \\
\end{array} \right.
\end{displaymath}
\vspace{0,2cm}
and
\vspace{0,2cm}
\begin{displaymath}
\left\{
\begin{array}{l}
\tilde{\varphi}_{j_1,...,j_{n}} = \varphi_{j_1,...,j_{n},1},\  j_n=1,...,m\\
\tilde{\varphi}_{j_1,...,m+j_{n}} = \varphi_{j_1,...,j_{n},2},\  j_n=1,...,m\\
\ \ \ \vdots \\
\tilde{\varphi}_{j_1,...,(m-1)m+j_{n}} = \varphi_{j_1,...,j_{n},m},\  j_n=1,...,m \ .\\
\end{array} \right.
\end{displaymath}
In fact,

\begin{align*}
&  \sum_{j_1,...,j_{n+1}=1}^m \left\vert \varphi_{j_1,...,j_{n+1}}\left(T(x_{j_1}^{(1)},...,x_{j_n}^{(n)}\gamma(x_{j_{n+1}}^{(n+1)}))\right)\right\vert \\
& = \sum_{j_{n+1}=1}^m \left( \sum_{j_1,...,j_{n}=1}^m \left\vert \varphi_{j_1,...,j_{n+1}}\left(T(x_{j_1}^{(1)},...,x_{j_n}^{(n)}\gamma(x_{j_{n+1}}^{(n+1)}))\right)\right\vert \right)\\
& = \sum_{j_1,...,j_{n}=1}^m \left\vert \varphi_{j_1,...,j_{n},1}\left(T(x_{j_1}^{(1)},...,x_{j_n}^{(n)}\gamma(x_{1}^{(n+1)}))\right)\right\vert +\\
& + \sum_{j_1,...,j_{n}=1}^m \left\vert \varphi_{j_1,...,j_{n},2}\left(T(x_{j_1}^{(1)},...,x_{j_n}^{(n)}\gamma(x_{2}^{(n+1)}))\right)\right\vert + \cdots\\
& \cdots + \sum_{j_1,...,j_{n}=1}^m \left\vert \varphi_{j_1,...,j_{n},m}\left(T(x_{j_1}^{(1)},...,x_{j_n}^{(n)}\gamma(x_{m}^{(n+1)}))\right)\right\vert \\
& = \sum_{j_1,...,j_{n}=1}^m \left\vert \tilde{\varphi}_{j_1,...,j_{n}}\left(T(z_{j_1}^{(1)},...,z_{j_n}^{(n)})\right)\right\vert +\\
& + \sum_{j_1,...,j_{n}=1}^m \left\vert \tilde{\varphi}_{j_1,...,m+j_{n}}\left(T(z_{j_1}^{(1)},...,z_{m+j_n}^{(n)})\right)\right\vert + \cdots\\
& \cdots + \sum_{j_1,...,j_{n}=1}^m \left\vert \tilde{\varphi}_{j_1,...,(m-1)m+j_{n}}\left(T(z_{j_1}^{(1)},...,z_{(m-1)m+j_n}^{(n)})\right)\right\vert \\
& = \sum_{j_n=1}^{m^2} \sum_{j_1,...,j_{n-1}=1}^m \left\vert \tilde{\varphi}_{j_1,...,j_{n}}\left(T(z_{j_1}^{(1)},...,z_{j_n}^{(n)})\right)\right\vert \ .
\end{align*}
In this way, if $T \in \mathcal{L}_{mCoh,p}(E_1,...,E_{n};F)$, using (\ref{par1}) and (\ref{par2}), we get
\begin{align*}
& \sum_{j_1,...,j_{n+1}=1}^m \left\vert \varphi_{j_1,...,j_{n+1}}\left(\gamma T(x_{j_1}^{(1)},...,x_{j_n}^{(n)},x_{j_{n+1}}^{(n+1)})\right)\right\vert \\
& = \sum_{j_n=1}^{m^2} \left(\sum_{j_1,...,j_{n-1}=1}^m \left\vert \tilde{\varphi}_{j_1,...,j_{n}}\left(T(z_{j_1}^{(1)},...,z_{j_n}^{(n)})\right)\right\vert \right)\\
& =  \sum_{j_1,...,j_{n-1},j_n=1}^{m,...,m,m^2} \left\vert \tilde{\varphi}_{j_1,...,j_{n}}\left(T(z_{j_1}^{(1)},...,z_{j_n}^{(n)})\right)\right\vert \\
& \leq ||T||_{mCoh,p}\, \left\Vert \left(z_{j_n}^{(n)}\right)_{j_n=1}^{m^2}\right\Vert_p \prod_{i=1}^{n-1} \left\Vert \left(z_{j_i}^{(i)}\right)_{j_i=1}^m\right\Vert_p \,  ||(\tilde{\varphi}_{j_1,...,j_{n}})_{j_1,...,j_{n-1},j_n=1}^{m,...,m,m^2}||_{w,p^*} \\
& = ||T||_{mCoh,p}\, \left\Vert \left(x_{j_n}^{(n)}\gamma (x_{j_{n+1}}^{(n+1)})\right)_{j_n,j_{n+1}=1}^{m}\right\Vert_p \prod_{i=1}^{n-1} \left\Vert \left(x_{j}^{(i)}\right)_{j=1}^m\right\Vert_p \,  ||(\varphi_{j_1,...,j_{n+1}})_{j_1,...,j_{n+1}=1}^{m}||_{w,p^*} \\
& \overset{(*)}{\leq} ||T||_{mCoh,p}\,||\gamma|| \prod_{i=1}^{n+1} \left\Vert \left(x_{j}^{(i)}\right)_{j=1}^m\right\Vert_p \,  ||(\varphi_{j_1,...,j_{n+1}})_{j_1,...,j_{n+1}=1}^{m}||_{w,p^*} \ ,
\end{align*}
where in the transition $(*)$, we are using the fact that
\[ \left\Vert \left(x_{j}\gamma (y_{k})\right)_{j,k=1}^{m}\right\Vert_p  =\left( \sum_{j,k=1}^m |\gamma (y_{k})|^{p}\, ||x_{j}||^{p}\right)^{1/p}
  =\left\Vert \left(\gamma (y_{k})\right)_{k=1}^m\right\Vert_p \, \left\Vert \left(x_{j}\right)_{j=1}^m\right\Vert_p \]

and the continuity of $\gamma$.

Therefore, $\gamma T\in\mathcal{L}_{mCoh,p}\left(  E_{1},\ldots,E_{n+1};F\right)  $ and
\[ \left\Vert \gamma T\right\Vert _{mCoh,p}\leq \left\Vert T\right\Vert _{mCoh,p} \left\Vert\gamma\right\Vert .\]

$(CH4)$ Let $\gamma \in E^{'}$ and $S_n$ the set of all permutations of the set $\{1,...,n\}$. Note that we can build $(\gamma P)^\vee$ as
\begin{align*}
(\gamma P)^\vee(x_1,...,x_{n+1}) & = \frac{1}{(n+1)!} \sum_{\sigma \in S_{n+1}} \gamma(x_{\sigma(k)}) \check{P}(x_{\sigma(1)},\overset{[k]}{\cdots}, x_{\sigma(n+1)}) \\
& = \frac{1}{(n+1)!} \left[ \gamma(x_{\sigma(1)}) \sum_{\sigma \in S_{n}} \check{P}(x_{\sigma(2)},\cdots , x_{\sigma(n+1)}) + \cdots \right.\\
& \left.\cdots + \gamma(x_{\sigma(n+1)})\sum_{\sigma \in S_{n}} \check{P}(x_{\sigma(1)},\cdots , x_{\sigma(n)}) \right]\\
& = \frac{1}{(n+1)!} \left[\gamma(x_{1}) n! \check{P}(x_{2},\cdots , x_{n+1}) + \cdots + \gamma(x_{n+1}) n! \check{P}(x_{1},\cdots , x_{n}) \right] \\
& = \frac{1}{n+1} \sum_{k=1}^{n+1} \gamma(x_{k})  \check{P}(x_{1},\overset{[k]}{\cdots} , x_{n+1}) \ ,
\end{align*}
where $\overset{[k]}{\cdots}$ indicates that the $k$-th coordinate is not involved.

Using this fact, if $m \in \mathbb{N}$, we have

\begin{align*}
& \sum_{j_1,...,j_{n+1}=1}^m \left\vert \varphi_{j_1,...,j_{n+1}}\left((\gamma P)^\vee (x_{j_1}^{(1)},...,x_{j_{n+1}}^{(n+1)})\right)\right\vert  \\
& = \frac{1}{n+1} \sum_{j_1,...,j_{n+1}=1}^m \left\vert \varphi_{j_1,...,j_{n+1}}\left(\sum_{k=1}^{n+1} \gamma(x_{j_k}^{(k)})  \check{P}(x_{j_1}^{(1)},\overset{[k]}{\cdots} , x_{j_{n+1}}^{(n+1)})\right)\right\vert  \\
& \leq \frac{1}{n+1} \left( \sum_{j_1,...,j_{n+1}=1}^m \left(\sum_{k=1}^{n+1} \left\vert \varphi_{j_1,...,j_{n+1}}\left( \gamma(x_{j_k}^{(k)})  \check{P}(x_{j_1}^{(1)},\overset{[k]}{\cdots} , x_{j_{n+1}}^{(n+1)})\right)\right\vert \right)\right)  \\
& = \frac{1}{n+1} \left\Vert \left(\sum_{k=1}^{n+1} \left\vert \varphi_{j_1,...,j_{n+1}}\left( \gamma(x_{j_k}^{(k)})  \check{P}(x_{j_1}^{(1)},\overset{[k]}{\cdots} , x_{j_{n+1}}^{(n+1)})\right)\right\vert \right)_{j_1,...,j_{n+1}=1}^m \right\Vert_1  \\
\end{align*}
\begin{align*}
& \leq \frac{1}{n+1} \sum_{k=1}^{n+1} \left\Vert \left(\left\vert \varphi_{j_1,...,j_{n+1}}\left( \gamma(x_{j_k}^{(k)})  \check{P}(x_{j_1}^{(1)},\overset{[k]}{\cdots} , x_{j_{n+1}}^{(n+1)})\right)\right\vert \right)_{j_1,...,j_{n+1}=1}^m \right\Vert_1  \\
& = \frac{1}{n+1} \sum_{k=1}^{n+1} \left( \sum_{j_1,...,j_{n+1}=1}^m \left\vert \varphi_{j_1,...,j_{n+1}}\left( \gamma(x_{j_k}^{(k)})  \check{P}(x_{j_1}^{(1)},\overset{[k]}{\cdots} , x_{j_{n+1}}^{(n+1)})\right)\right\vert\right) \\
& = \frac{1}{n+1} \left[ \left( \sum_{j_1,...,j_{n+1}=1}^m \left\vert \varphi_{j_1,...,j_{n+1}}\left( \check{P}(\gamma(x_{j_1}^{(1)})x_{j_2}^{(2)},\cdots , x_{j_{n+1}}^{(n+1)})\right)\right\vert\right) + \cdots \right. \\
& \left. \cdots +  \left( \sum_{j_1,...,j_{n+1}=1}^m \left\vert \varphi_{j_1,...,j_{n+1}}\left( \check{P}(\gamma(x_{j_{n+1}}^{(n+1)})x_{j_1}^{(1)},\cdots , x_{j_{n}}^{(n)})\right)\right\vert\right) \right] \ .
\end{align*}

Hence
\begin{align}\label{parcelas}
& \sum_{j_1,...,j_{n+1}=1}^m \left\vert \varphi_{j_1,...,j_{n+1}}\left((\gamma P)^\vee (x_{j_1}^{(1)},...,x_{j_{n+1}}^{(n+1)})\right)\right\vert   \\
& = \frac{1}{n+1} \left[ \left( \sum_{j_1,...,j_{n+1}=1}^m \left\vert \varphi_{j_1,...,j_{n+1}}\left( \check{P}(\gamma(x_{j_1}^{(1)})x_{j_2}^{(2)},\cdots , x_{j_{n+1}}^{(n+1)})\right)\right\vert\right) + \cdots \right. \nonumber \\
& \left. \cdots +  \left( \sum_{j_1,...,j_{n+1}=1}^m \left\vert \varphi_{j_1,...,j_{n+1}}\left( \check{P}(\gamma(x_{j_{n+1}}^{(n+1)})x_{j_1}^{(1)},\cdots , x_{j_{n}}^{(n)})\right)\right\vert\right) \right] \nonumber \ .
\end{align}

Then, by the same argument used to demonstrate the property $(CH3)$, each part of the (\ref{parcelas}), for example the first, can be written as
\begin{equation*}
\sum_{j_2=1}^{m^2} \sum_{j_3,...,j_{n+1}=1}^m \left\vert \tilde{\varphi}_{j_2,...,j_{n+1}}\left(\check{P}(z_{j_2}^{(2)},...,z_{j_{n+1}}^{(n+1)})\right)\right\vert \\
\end{equation*}
for convenient choices of $\tilde{\varphi}_{j_2,...,j_{n+1}}$ and $z_{j_k}^{(k)}$, $k=2,...,n+1$, and therefore, as shown in property $(CH3)$, we have
\begin{align*}
& \sum_{j_1,...,j_{n+1}=1}^m \left\vert \varphi_{j_1,...,j_{n+1}}\left( \check{P}(\gamma(x_{j_1}^{(1)})x_{j_2}^{(2)},\cdots , x_{j_{n+1}}^{(n+1)})\right)\right\vert \\
& \leq ||\check{P}||_{mCoh,p}\,||\gamma|| \prod_{i=1}^{n+1} \left\Vert \left(x_{j}^{(i)}\right)_{j=1}^m\right\Vert_p \,  ||(\varphi_{j_1,...,j_{n+1}})_{j_1,...,j_{n+1}=1}^{m}||_{w,p^*}\ .
\end{align*}

So, returning to (\ref{parcelas}), we finally obtain
\begin{align*}
& \sum_{j_1,...,j_{n+1}=1}^m \left\vert \varphi_{j_1,...,j_{n+1}}\left((\gamma P)^\vee (x_{j_1}^{(1)},...,x_{j_{n+1}}^{(n+1)})\right)\right\vert   \\
& = \frac{1}{n+1} \left[ \left( \sum_{j_1,...,j_{n+1}=1}^m \left\vert \varphi_{j_1,...,j_{n+1}}\left( \check{P}(\gamma(x_{j_1}^{(1)})x_{j_2}^{(2)},\cdots , x_{j_{n+1}}^{(n+1)})\right)\right\vert\right) + \cdots \right. \\
& \left. \cdots +  \left( \sum_{j_1,...,j_{n+1}=1}^m \left\vert \varphi_{j_1,...,j_{n+1}}\left( \check{P}(\gamma(x_{j_{n+1}}^{(n+1)})x_{j_1}^{(1)},\cdots , x_{j_{n}}^{(n)})\right)\right\vert\right) \right]  \\
& \leq \frac{1}{n+1} \left[ ||\check{P}||_{mCoh,p}\,||\gamma|| \prod_{i=1}^{n+1} \left\Vert \left(x_{j}^{(i)}\right)_{j=1}^m\right\Vert_p \,  ||(\varphi_{j_1,...,j_{n+1}})_{j_1,...,j_{n+1}=1}^{m}||_{w,p^*} + \cdots \right. \\
& \left. \cdots +  ||\check{P}||_{mCoh,p}\,||\gamma|| \prod_{i=1}^{n+1} \left\Vert \left(x_{j}^{(i)}\right)_{j=1}^m\right\Vert_p \,  ||(\varphi_{j_1,...,j_{n+1}})_{j_1,...,j_{n+1}=1}^{m}||_{w,p^*} \right] \\
& = ||\check{P}||_{mCoh,p}\,||\gamma|| \prod_{i=1}^{n+1} \left\Vert \left(x_{j}^{(i)}\right)_{j=1}^m\right\Vert_p \,  ||(\varphi_{j_1,...,j_{n+1}})_{j_1,...,j_{n+1}=1}^{m}||_{w,p^*} \ .
\end{align*}

Thus $\gamma P \in \mathcal{P}_{mCoh,p}(^{n+1}E;F)$ and
\begin{equation*}
||\gamma P||_{mCoh,p} \leq ||\check{P}||_{mCoh,p}\,||\gamma|| = ||P||_{mCoh,p}\,||\gamma|| \ .
\end{equation*}

Therefore, by Proposition \ref{coerImpcomp}, $(\mathcal{P}_{mCoh,p}^n,\mathcal{L}_{mCoh,p}^n)_{n=1}^\infty$ is coherent and compatible with the ideal $\mathcal{D}_p$ of Cohen strongly $p$-summing linear operators.
\end{proof}

\vspace{0,15cm}
\section{Holomorphy types and the ideal of multiple Cohen strongly multilinear operators}\label{ht}

In the previous section we shown that the notion of multiple Cohen strongly $p$-summing multilinear operators is well behaved from the viewpoint of the theory of ideals of operators. In this section we show that this class is also well behaved from the viewpoint of holomorphy.

The following definition is essentially the same as that given by Nachbin \cite{nachbin69}:

\vspace{0,15cm}
\begin{definition}[Botelho et. al., \cite{studia}]
A global holomorphy type is a class $\mathcal{P}_H$ of continuous homogeneous polynomials between Banach spaces such that for all natural $n$ and Banach spaces $E$ and $F$ the components $\mathcal{P}_H(^nE;F) := \mathcal{P}(^nE;F) \cup \mathcal{P}_H$ satisfy:

\vspace{0,15cm}
$(i)$ $\mathcal{P}_H(^nE;F)$ is a linear Banach subspace of $\mathcal{P}(^nE;F)$ endowed with a norm denoted by $P \mapsto ||P||_H$;

$(ii)$ $\mathcal{P}_H(^0E;F) = F$ is a linear normed space for all $E$ and $F$;

$(iii)$ there is a constant $\sigma \geq 1$ such that for all $n \in \mathbb{N}$, $k\leq n$, $a \in E$ and any Banach spaces $E$ and $F$, with $P \in \mathcal{P}_H(^nE;F)$,
\begin{equation*}
\hat{d}^kP(a) \in \mathcal{P}_H(^kE;F) \ \mathrm{and}\ \left\Vert \frac{1}{k!}\hat{d}^kP(a) \right\Vert_H \leq \sigma^n ||P||_H ||a||^{n-k}\ ,
\end{equation*}
where $\hat{d}^kP(a)$ is the $k$th differential of $P$ at $a$.
\end{definition}

If we have quasi-norms instead of norms (each $\mathcal{P}_H(^nE; F)$ is a complete
quasi-normed space with quasi-norm constants not depending on the under-
lying spaces E and F, but possibly depending on n), we say that $\mathcal{P}_H$ is a
global quasi-holomorphy type.

For the next definition, we use the notation $(^nE,G;F)$ instead of $(E,\overset{(n)}{\cdots},E,G;F)$.

\vspace{0,15cm}
\begin{definition}[Botelho et. al., \cite{studia}]
Let $\mathcal{J}$ be a class of continuous multilinear mappings between Banach spaces such that for all $n \in \mathbb{N}$ and Banach spaces $E_1,...,E_n$
and $F$, the component $\mathcal{J}(E_1,...,E_n;F) := \mathcal{L}(E_1,...,E_n;F) \cup \mathcal{J}$ is
a linear subspace of $\mathcal{L}(E_1,...,E_n; F)$ equipped with a norm denoted by $||\cdot||_\mathcal{J}$ . We say that $\mathcal{J}$ has property (B) if there is $C \geq 1$
such that for every $n \in \mathbb{N}$, any Banach spaces E and F and every
$A \in \mathcal{J}(^nE,\mathbb{K};F)$ symmetric in the first $n$ variables, occurs
\begin{equation*}
A1 \in \mathcal{J}(^nE;F)\ \ \mathrm{and}\ \  ||A1||_\mathcal{J} \leq C||A||_\mathcal{J} \ ,
\end{equation*}
where $A1\ :\, E^n \rightarrow F$ is defined by $A1(x_1,...,x_n) := A(x_1,...,x_n,1)$.
\end{definition}

\vspace{0,15cm}
\begin{theorem}[Botelho et. al., \cite{studia}]
If the Banach ideal $\mathcal{M}$ of multilinear operators has property (B) with constant C, then the Banach ideal $\mathcal{P}_\mathcal{M}$ of polynomials generated by $\mathcal{M}$ is a global holomorphy type with constant $\sigma = 2C$.
\end{theorem}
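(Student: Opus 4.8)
The plan is to verify the three defining conditions of a global holomorphy type for $\mathcal{P}_{\mathcal{M}}$, the bulk of the work being condition $(iii)$. Conditions $(i)$ and $(ii)$ come essentially for free: since $\mathcal{M}$ is a Banach ideal, the earlier proposition guaranteeing that $\mathcal{P}_{\mathcal{M}}$ is a Banach ideal of polynomials already shows that each component $\mathcal{P}_{\mathcal{M}}(^nE;F)$ is a Banach subspace of $\mathcal{P}(^nE;F)$ under $\|P\|_{\mathcal{P}_{\mathcal{M}}}=\|\check{P}\|_{\mathcal{M}}$, while $\mathcal{P}_{\mathcal{M}}(^0E;F)=F$ holds by convention. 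So I would dispatch these quickly and concentrate on the differentiation estimate.

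For $(iii)$ I would first record the differentiation identity: for $P\in\mathcal{P}(^nE;F)$ with symmetric linearization $\check{P}$ and $k\le n$,
\[
\frac{1}{k!}\hat{d}^kP(a)(y)=\binom{n}{k}\check{P}(a^{n-k}y^k),
\]
so that the symmetric linearization of the $k$-homogeneous polynomial $Q:=\frac{1}{k!}\hat{d}^kP(a)$ is $\check{Q}(y_1,\dots,y_k)=\binom{n}{k}\check{P}(a^{n-k},y_1,\dots,y_k)$, which is symmetric because $\check{P}$ is. This reduces the whole problem to showing that the partial evaluation $\check{P}(a^{n-k},\cdot)$ lies in $\mathcal{M}(^kE;F)$ with a suitable norm bound, after which only book-keeping of the constant remains.

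The heart of the argument is fixing the $n-k$ slots at $a$ \emph{one at a time}, alternating the multilinear ideal property with property (B). Starting from $\check{P}\in\mathcal{M}(^nE;F)$, I would compose on the last slot with the rank-one map $\iota_a\colon\mathbb{K}\to E$, $\lambda\mapsto\lambda a$ (note $\|\iota_a\|=\|a\|$); the ideal property yields a map in $\mathcal{M}(^{n-1}E,\mathbb{K};F)$, symmetric in its first $n-1$ ($E$-)variables, of $\mathcal{M}$-norm at most $\|\check{P}\|_{\mathcal{M}}\|a\|$. Property (B) then collapses the scalar slot (evaluation at $1$), producing $\check{P}(\cdot,a)\in\mathcal{M}(^{n-1}E;F)$ with norm at most $C\|\check{P}\|_{\mathcal{M}}\|a\|$. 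The crucial point, and the step I expect to be the main obstacle, is that the auxiliary $\mathbb{K}$-slot must be created and immediately annihilated: at each stage property (B) is applied to a map genuinely of the form $\mathcal{M}(^{m}E,\mathbb{K};F)$ that is symmetric in its $E$-variables, which is exactly what fails if one tries to introduce all $n-k$ scalar slots simultaneously. Iterating $n-k$ times gives $\check{P}(a^{n-k},\cdot)\in\mathcal{M}(^kE;F)$ with $\|\check{P}(a^{n-k},\cdot)\|_{\mathcal{M}}\le C^{n-k}\|\check{P}\|_{\mathcal{M}}\|a\|^{n-k}$.

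Finally I would assemble the constant. Combining this estimate with the linearization of $Q$ gives
\[
\Big\|\tfrac{1}{k!}\hat{d}^kP(a)\Big\|_{\mathcal{P}_{\mathcal{M}}}=\binom{n}{k}\,\|\check{P}(a^{n-k},\cdot)\|_{\mathcal{M}}\le \binom{n}{k}\,C^{n-k}\,\|P\|_{\mathcal{P}_{\mathcal{M}}}\,\|a\|^{n-k}.
\]
Since $C\ge 1$ and $\binom{n}{k}\le 2^n$, we have $\binom{n}{k}C^{n-k}\le 2^nC^n=(2C)^n$, which is precisely the bound demanded by condition $(iii)$ with $\sigma=2C$. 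Everything apart from the iteration in the previous paragraph — namely the differentiation identity and this elementary constant estimate — is routine, so the proof hinges on keeping the relevant maps symmetric in all their Banach variables and carrying a single scalar slot at each application of property (B).
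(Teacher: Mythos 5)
Your argument is correct, and it is essentially the proof of this theorem given in the cited source \cite{studia} — the present paper states the result without proof. Your key step (fixing the slots of $\check{P}$ at $a$ one at a time by composing with the rank-one map $\lambda\mapsto\lambda a$, which preserves symmetry in the remaining $E$-variables, then collapsing the single scalar slot via property (B), iterated $n-k$ times), together with the identity $\frac{1}{k!}\hat{d}^kP(a)(y)=\binom{n}{k}\check{P}(a^{n-k}y^k)$ and the estimate $\binom{n}{k}C^{n-k}\leq(2C)^n$ for $C\geq 1$, is exactly the standard route to the constant $\sigma=2C$.
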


\vspace{0,15cm}
We show that the ideal $\mathcal{L}_{mCoh,p}$ of multiple Cohen strongly $p$-summing multilinear operators has the property (B) and therefore the class $\mathcal{P}_{mCoh,p}$ of multiple Cohen strongly $p$-summing polynomials is a global holomorphy type.

\vspace{0,15cm}
\begin{theorem} The complete ideal $\mathcal{L}_{mCoh,p}$ has property (B) with constant $C =1$. Therefore, the ideal $\mathcal{P}_{mCoh,p}$ of multiple Cohen strongly $p$-summing polynomials is a global holomorphy type with constant $\sigma =2$.
\end{theorem}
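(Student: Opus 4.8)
The plan is to verify property (B) for $\mathcal{L}_{mCoh,p}$ directly from the inequality characterization in Proposition \ref{mCohnorm}, and then to read off the holomorphy type conclusion from the preceding theorem of Botelho et al.~\cite{studia}. Fix $n\in\mathbb{N}$, Banach spaces $E,F$, and an $(n+1)$-linear operator $A\in\mathcal{L}_{mCoh,p}(^{n}E,\mathbb{K};F)$ symmetric in its first $n$ variables. The first observation is that $A1(x_1,\dots,x_n)=A(x_1,\dots,x_n,1)$ is nothing but the partial evaluation $A_{a_{n+1}}$ obtained by freezing the last ($\mathbb{K}$-valued) coordinate at the scalar $a_{n+1}=1$, whose norm is $\|1\|_{\mathbb{K}}=1$. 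Thus the required estimate is exactly the instance $E_{n+1}=\mathbb{K}$, $a_{n+1}=1$ of the computation already performed for (CH1) in the coherence theorem, and the symmetry of $A$ will not even be needed.

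Concretely, I would reproduce the (CH1) bookkeeping: for $m\in\mathbb{N}$, functionals $\varphi_{j_1,\dots,j_n}\in F'$ and vectors $x_j^{(i)}\in E$, extend the data to an $(n+1)$-indexed family by setting $\tilde\varphi_{j_1,\dots,j_n,j_{n+1}}:=\varphi_{j_1,\dots,j_n}$ for $j_{n+1}=1$ and $0$ otherwise, and by using the scalar sequence $(1,0,0,\dots)$ in the $\mathbb{K}$-slot. Only the term $j_{n+1}=1$ survives, so that
\begin{equation*}
\sum_{j_1,\dots,j_n=1}^{m}\left\vert\varphi_{j_1,\dots,j_n}\left(A1\left(x_{j_1}^{(1)},\dots,x_{j_n}^{(n)}\right)\right)\right\vert=\sum_{j_1,\dots,j_{n+1}=1}^{m}\left\vert\tilde\varphi_{j_1,\dots,j_{n+1}}\left(A\left(x_{j_1}^{(1)},\dots,x_{j_{n+1}}^{(n+1)}\right)\right)\right\vert .
\end{equation*}
Applying Proposition \ref{mCohnorm}(iii) to the right-hand side and noting that the $\mathbb{K}$-slot contributes the factor $\|(1,0,\dots)\|_p=\|1\|_{\mathbb{K}}=1$, while inserting zeros leaves $\|(\tilde\varphi_{j_1,\dots,j_{n+1}})\|_{w,p^*}=\|(\varphi_{j_1,\dots,j_n})\|_{w,p^*}$ unchanged, gives the desired inequality with constant $\|A\|_{mCoh,p}$. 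By Proposition \ref{mCohnorm} this shows $A1\in\mathcal{L}_{mCoh,p}(^{n}E;F)$ and $\|A1\|_{mCoh,p}\leq\|A\|_{mCoh,p}$, i.e. property (B) holds with $C=1$.

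Finally, since $\mathcal{L}_{mCoh,p}$ is a complete (Banach) multilinear ideal and has property (B) with $C=1$, the preceding theorem of Botelho et al.~\cite{studia}, applied to $\mathcal{M}=\mathcal{L}_{mCoh,p}$, yields that the generated polynomial ideal $\mathcal{P}_{\mathcal{L}_{mCoh,p}}=\mathcal{P}_{mCoh,p}$ of Definition \ref{defpolmult} is a global holomorphy type with $\sigma=2C=2$. The only delicate point, and the mildest of obstacles, is the verification that the scalar coordinate contributes exactly the factor $1$ rather than an arbitrary constant: this is precisely what pins the constant down to $C=1$ and hence $\sigma=2$; everything else is the routine machinery already deployed for (CH1).
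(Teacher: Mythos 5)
Your proposal is correct and takes essentially the same route as the paper: both verify property (B) by padding the functionals with zeros in the extra index, placing the scalar sequence $(1,0,\dots,0)$ in the $\mathbb{K}$-slot so that $\left\Vert (y_j)_{j=1}^m\right\Vert_p = 1$, applying the inequality of Proposition \ref{mCohnorm}, and then invoking the theorem of Botelho et al.\ \cite{studia} to conclude $\sigma = 2C = 2$. Your two side remarks --- that the symmetry of $A$ is never used and that zero-padding leaves the weak $\ell_{p^*}$ norm of the functionals unchanged --- are both accurate and are implicit in the paper's own computation.
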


\begin{proof}
Let be $n \in \mathbb{N}$, $E$ and $F$ Banach spaces and $T \in \mathcal{L}_{mCoh,p}(^nE,\mathbb{K};F)$. For all positive integer $m$, we define
\begin{equation}\label{esc}
\varphi_{j_1,...,j_{n},j_{n+1}} = \left\{
\begin{array}{l}
\varphi_{j_1,...,j_{n}},\ \mathrm{if} \ j_{n+1} = 1 \\
0, \ \mathrm{if}\ j_{n+1}=2,...,m\ ,
\end{array} \right.
\ \ \mathrm{and}\ \
y_{j_{n+1}} = \left\{
\begin{array}{l}
1,\ \mathrm{if} \ j_{n+1} = 1 \\
0, \ \mathrm{if}\ j_{n+1}=2,...,m\ ,
\end{array} \right.
\end{equation}
with $j_1,...,j_{n},j_{n+1}=1,...,m$.  Thus, we have
\begin{align*}
& \sum_{j_1,...,j_{n}=1}^m \left\vert \varphi_{j_1,...,j_{n}}\left(T1 (x_{j_1}^{(1)},...,x_{j_n}^{(n)})\right)\right\vert\\
& = \sum_{j_1,...,j_{n}=1}^m \left\vert \varphi_{j_1,...,j_{n}}\left(T(x_{j_1}^{(1)},...,x_{j_n}^{(n)},1)\right)\right\vert\\
& \overset{(\ref{esc})}{=} \sum_{j_1,...,j_{n+1}=1}^m \left\vert \varphi_{j_1,...,j_{n+1}}\left(T(x_{j_1}^{(1)},...,x_{j_n}^{(n)}, y_{j_{n+1}})\right)\right\vert\\
& \leq ||T||_{mCoh,p}\, \left\Vert \left(y_{j}\right)_{j=1}^m\right\Vert_p \prod_{i=1}^{n} \left\Vert \left(x_{j}^{(i)}\right)_{j=1}^m\right\Vert_p \,  ||(\varphi_{j_1,...,j_{n+1}})_{j_1,...,j_{n+1}=1}^{m}||_{w,p^*} \\
& = ||T||_{mCoh,p}\, \prod_{i=1}^{n} \left\Vert \left(x_{j}^{(i)}\right)_{j=1}^m\right\Vert_p \,  ||(\varphi_{j_1,...,j_{n}})_{j_1,...,j_{n}=1}^{m}||_{w,p^*}\ ,
\end{align*}
from which $T1 \in \mathcal{L}_{mCoh,p}(^nE;F)$ e $||T1||_{mCoh,p} \leq ||T||_{mCoh,p}$. Therefore, $\mathcal{L}_{mCoh,p}$ has the property (B) with constant $C=1$.
\end{proof}



\begin{thebibliography}{99}

\bibitem {achour07} D. Achour and L. Mezrag, On the Cohen strongly $p$-summing multilinear operators, Journal of Mathematical Analysis and Applications, \textbf{327} (2007), 550--563.

\bibitem {achour10} D. Achour and K. Saadi, A polynomial characterization of Hilbert spaces, Collectanea Mathematica, \textbf{61} (2010), 291--301.




\bibitem {studia} G. Botelho, H.-A. Braunss, H. Junek and D. Pellegrino,
Holomorphy types and ideals of multilinear mappings, Studia
Mathematica \textbf{177} (2006), 43--65.



\bibitem{BPRn} G. Botelho, D. Pellegrino and P. Rueda, A unified Pietsch
Domination Theorem, Journal of Mathematical Analysis and Applications. \textbf{365} (2010), 269--276.

\bibitem {CDM09} D. Carando, V. Dimant and S. Muro, Coherent sequences of
    polinomial ideals on Banach spaces. Mathematische Nachrichten, \textbf{282} (2009), 1111--1133.

\bibitem {CDM33} D. Carando, V. Dimant and S. Muro, Holomorphic functions and
    polynomial ideals on Banach spaces, Collectanea Mathematica, \textbf{63} (2012), 71--91.

\bibitem {CDM} D. Carando, V. Dimant and S. Muro, Every Banach ideal of
    polynomials is compatible with an operator ideal, Monatshefte f\"{u}r Mathematik, \textbf{165}
    (2012), 1--14.

\bibitem {cohen73} J. S. Cohen, Absolutely $p$-summing, $p$-nuclear operators and their conjugates, Mathematische Annalen, \textbf{201} (1973), 177--200.

\bibitem {dimant03} V. Dimant, Strongly p-summing multilinear operators, Journal of Mathematical Analysis and Applications, \textbf{278} (2003), 182--193.


\bibitem {Matos-N} M.C. Matos, Nonlinear absolutely summing mappings, Mathematische Nachrichten, \textbf{258} (2003), 71--89.

\bibitem {ma111} M.C. Matos, Fully absolutely summing mappings and Hilbert Schmidt operators, Collectanea Mathematica, \textbf{54} (2003), 111–-136.

\bibitem {mezrag09} L. Mezrag and K. Saadi, Inclusion theorems for Cohen strongly summing multilinear operators, Bulletin of the Belgian Mathematical Society Simon Stevin, \textbf{16} (2009), 1--11.

\bibitem {Mu} J. Mujica, Complex analysis in Banach spaces, North-Holland
    Mathematics Studies \textbf{120}, North-Holland (1986).

\bibitem {nachbin69} L. Nachbin, Topology on Spaces of Holomorphic Mappings, Springer, New York (1969).

\bibitem {daniel11} D. Pellegrino and J. Santos, Absolutely summing multilinear operators: a panorama, Quaestiones Mathematicae, \textbf{34} (2011), 447--478.

\bibitem {jmaa1} D. Pellegrino and J. Santos, A general Pietsch Domination
    Theorem, Journal of Mathematical Analysis and Applications, \textbf{375} (2011), 371--374.

\bibitem {daniel12} D. Pellegrino, J. Santos and J. B. Seoane-Sep\'{u}lveda, Some techniques on nonlinear analysis and applications, Advances in Mathematics, \textbf{229} (2012), 1235--1265.

\bibitem {joilson11} D. Pellegrino and J. O. Ribeiro, On multi-ideals and polynomial ideals of Banach spaces: a new approach to coherence and compatibility, arXiv:1101.1992v3 [math.FA].

\bibitem {perez03} D. P\'{e}rez-Garc\'{i}a and I. Villanueva, Multiple summing operators on Banach spaces, Journal of Mathematical Analysis and Applications, \textbf{285} (2003), 86--96.

\bibitem {pietsch67} A. Pietsch, Absolut $p$-summierende abbildungen in normiertn räumen, Studia
Mathematica, \textbf{28} (1967), 333--353.

\bibitem {pietsch80} A. Pietsch, Operator Ideals, North-Holland Holland Math. Library, Amsterdam (1980).

\bibitem {PPPP} A. Pietsch, Ideals of multilinear functionals, Proceedings of
    the Second International Conference on Operator Algebras, Ideals and Their
    Applications in Theoretical Physics, Teubner-Texte, Leipzig (1983), 185--199.

\end{thebibliography}
\end{document}